\allowdisplaybreaks \numberwithin{equation}{section}
\theoremstyle{plain}
\newtheorem{theorem}{Theorem}[section]
\newtheorem{proposition}[theorem]{Proposition}
\newtheorem{lemma}[theorem]{Lemma}
\newtheorem{corollary}[theorem]{Corollary}
\theoremstyle{definition}
\newtheorem{definition}[theorem]{Definition}
\newtheorem{remark}[theorem]{Remark}
\newcommand{\ba}{\mathbf{a}}
\newcommand{\bc}{\mathbf{c}}
\newcommand{\bx}{\mathbf{x}}
\def \div {\mathop {\rm div}\nolimits}
\def \curl {\mathop {\rm curl}\nolimits}
\def \dist {\mathop {\rm dist}\nolimits}
\def \spt {\mathop {\rm spt}\nolimits}
\def \e {\epsilon}
\def \a {{a}}
\def \b {{b}}
\def \oea {\Omega_\eps(\a)}
\def \re {\mathbb R}
\def \E {\mathcal E_\eps(\a)}
\def \R {\mathbb R}
\def \Om {\Omega}
\def \eps {\e }
\newcommand{\de}{\mathrm{d}}
\def \ezero {\e_0}
\def \ea{\e_\alpha}
\def \barv {\bar v^\e}
\def \spazio {\mathcal X}
\newcommand{\cF}{\mathcal{F}}
\newcommand{\mygraphic}[1]{\includegraphics[height=#1]{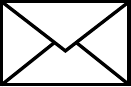}}
\newcommand{\myenv}{(\raisebox{0pt}{\mygraphic{.6em}})}
\renewcommand{\k}{K}
\title[Confinement of dislocations in a crystal]{Confinement of dislocations inside a crystal with a prescribed external strain}
\author{Ilaria Lucardesi}
\address[I.\@ Lucardesi]{Institut \'Elie Cartan de Lorraine, B.P.\@ 70239, 54506 Vandoeuvre-l\`es-Nancy, France}
\email{ilaria.lucardesi@univ-lorraine.fr}
\author{Marco Morandotti}
\address[M.\@ Morandotti]{Dipartimento di Scienze Matematiche, Politecnico di Torino, Corso Duca degli Abruzzi, 24, 10129 Torino, Italy}
\email{marco.morandotti@polito.it}
\author{Riccardo Scala}
\address[R.\@ Scala]{Dipartimento di Matematica ``'G.~Castelnuovo'', Sapienza Universit\`a di Roma, Piazzale Aldo Moro, 5, 00185 Roma, Italy}
\email{scala@mat.uniroma1.it}
\author{Davide Zucco}
\address[D.\@ Zucco \myenv]{Dipartimento di Matematica ``G.~Peano'', Universit\`a di Torino, Via Carlo Alberto, 10, 10123 Torino, Italy}
\email{davide.zucco@unito.it}
\date{\today.} 
\begin{document}

\begin{abstract}
A system of $n$ screw dislocations in an isotropic crystal undergoing antiplane shear is studied in the framework of linear elasticity.
Imposing a suitable boundary condition for the strain, namely requesting the non-vanishing of its boundary integral, results in a confinement effect.
More precisely, in the presence of an external strain with circulation equal to $n$ times the lattice spacing, it is energetically convenient to have $n$ distinct dislocations lying inside the crystal.
The result is obtained by formulating the problem via the core radius approach and by studying the asymptotics as the core size vanishes.
An iterative scheme is devised to prove the main result.
This work sets the basis for studying the upscaling problem, i.e., the limit as $n\to\infty$, which is treated in \cite{LMSZupscaling}.
\end{abstract}

\maketitle

\smallskip

\noindent\textbf{Keywords}: Dislocations, core radius approach, harmonic functions, divergence-measure fields.

\smallskip
\noindent\textbf{2010 MSC}: 74E15 
(35J25, 
74B05, 
49J40, 
31A05). 


\section{Introduction}
Starting with the pioneering work of Volterra \cite{volterra}, much attention has been drawn on dislocations in solids, as the ultimate cause of plasticity in crystalline materials \cite{GLP,orowan,polanyi,taylor}.
Dislocations are line defects in the lattice structure. 
The interest in dislocations became more and more evident as soon as it was understood that their presence can significantly influence the chemical and physical properties of the material.
The measure of the lattice mismatch due to a dislocation is encoded in the \emph{Burgers vector}, whose magnitude is of the order of one lattice spacing (see \cite{HB}).
According to whether the Burgers vector is perpendicular or parallel to the dislocation line, ideal dislocations are classified as \emph{edge dislocations} or \emph{screw dislocations}, respectively.
In nature, real dislocations come as a combination of these two types.
For general treaties on dislocations, we refer the reader to \cite{HL,HB,nabarro}.

In this paper we focus our attention on screw dislocations in a single isotropic crystal which occupies a cylindrical region $\Om\times\R$ and which undergoes antiplane shear.
According to the model proposed in \cite{CG} in the context of linearized elasticity, this allows us to study the problem in the cross section $\Omega\subset\R^2$. 
Throughout the work, we will assume that
\begin{equation}\label{H1}\tag{H1}
\text{$\Omega$ is a bounded convex open set with $C^1$ boundary.}
\end{equation}
We consider the lattice spacing of the material to be $2 \pi$ and that all the Burgers vectors are oriented in the same direction. 
Therefore, every dislocation line is directed along the axis of the cylinder, is characterized by a Burgers vector of magnitude $2\pi$ along the same axis, and meets the cross section $\Om$ at a single point. 
Moreover, we assume that an \emph{external strain} acts on the crystal: we prescribe the tangential strain on $\partial \Om$ to be of the form
\begin{equation}\label{datum}\tag{H2}
f\in L^{1}(\partial \Om)\quad \text{with}\quad \int_{\partial \Om} f(x)\,\de \mathcal H^1(x)=2\pi n
\end{equation}
for some $n \in \mathbb N$. This choice of the external strain will determine at most $n$ \emph{distinct} dislocations inside $\Om$ (see, e.g., \cite{BBH,SaSe} for a comment on the topological necessity of the presence of exactly $n$ defects; {see also \cite{DPV2015}, where an evolution problem in the fractional laplacian setting is also studies}), which we denote by $\ba\coloneqq(a_1,\ldots,a_n)\in\Omega^n\setminus\triangle_n$, where $\triangle_n\coloneqq\{\bx=(x_1,\ldots,x_n)\in\Omega^n: \text{there exist $i\neq j$ such that $x_i=x_j$}\}$.
 
Since the elastic energy associated with a defective material is infinite and has a logarithmic explosion in the vicinity of each dislocation $a_i$ (see, e.g., \cite{CG,nabarro}),
we resort to the so-called \emph{core radius approach}, which consists in considering the energy far from the dislocations $a_1,\ldots,a_n$. 
More precisely, given $\eps>0$, we aim at studying the elastic energy in the perforated domain $\Omega_\eps(\ba)\coloneqq \Omega\setminus\bigcup_{i=1}^n\overline{B}_\eps(a_i)$.
This approach is standard in the literature and it is employed in different contexts such as linear elasticity (see, for instance, \cite{nabarro,TOP,VKLLO}; also \cite{BM,CG,pons} for screw dislocations and \cite{CL} for edge dislocations), the theory of Ginzburg-Landau vortices (see, for instance, \cite{BBH,SaSe} and the refereces therein), and liquid crystals (see, for instance, \cite{GSV}).
Since the core radius approach eliminates the non-integrability of the strain field around the dislocations, classical variational techniques can be used.
Therefore, we consider the energy 
\begin{equation}\label{energy-n}
\mathcal E_\e^{(n)}(\ba):=
\min \bigg\{\frac{1}{2}\int_{\Omega_\e(\ba)} |F|^2\,\de x\, : \text{$F\in \spazio_\e(\ba) $, $F\cdot \tau=f$ on $\partial \Om\setminus \bigcup_{i=1}^n \overline{B}_\e(a_i)$}\bigg\}\,,
\end{equation}
where $\tau$ denotes the tangent unit vector to $\partial \Omega$ and
\begin{equation*}
\spazio_\e(\ba) \coloneqq \Big\{F\in L^2(\Omega_\e(\ba); \re^2): \curl F = 0\;  \hbox{in }\mathcal D'(\Omega_\e(\ba)), \, \langle F\cdot \tau,1 \rangle_\gamma = 2\pi m \Big\},
\end{equation*}
where $\mathcal{D}'(\Omega_\e(\ba))$ is the space of distributions on $\Omega_\e(\ba)$ and $\gamma$ is an arbitrary simple closed curve in $ \Omega_\e(\ba)$ winding once counterclockwise around $m$ dislocations. 
Note that the boundary condition $F\cdot \tau$ must be intended in the sense of traces and that $\langle \cdot ,\cdot \rangle_\gamma$ denotes the duality between $H^{-1/2}(\gamma)$ and $H^{1/2}(\gamma)$ (see \cite{ChenFrid}).
Since these spaces are encapsulated, the energy $\mathcal{E}_\e$ is monotone: 
if $0<\e<\eta$ then 
\begin{equation}\label{ginevra}
\mathcal E_\e^{(n)}(\ba)\geq\mathcal E_\eta^{(n)}(\ba).
\end{equation}

If the $a_i$'s are all distinct and inside $\Om$, the energy \eqref{energy-n} scales like $\pi n|\log\e|$.
This suggests to study the asymptotic behavior, as $\e\to0$, of the functionals $\mathcal F_\e^{(n)}\colon\overline\Om{}^n\to\R\cup\{+\infty\}$ defined by
\begin{equation}\label{Fepsn}
\mathcal F_\e^{(n)}(\ba):= \mathcal E_\e^{(n)}(\ba)-\pi n|\log\eps|.
\end{equation}
In this context, we say that the sequence of functionals $\mathcal F_\e^{(n)}$ \emph{continuously converges} in $\overline\Om{}^n$ to $\mathcal F^{(n)}$ as $\e\to0$ if, for any sequence of points $\ba^\e\in\overline\Om{}^n$ converging to $\ba\in\overline\Om{}^n$, the sequence (of real numbers) $\mathcal F_\e^{(n)}(\ba^\e)$ converges to $\mathcal F^{(n)}(\ba)$.

In order to write the limit functional, we introduce two objects: we take $g\colon \partial\Om\to\R$ a primitive of $f$ with $n$ jump points $b_i\in \partial \Omega$ and jump amplitude $2\pi$ (see \eqref{sum-gi} for a precise definition), and
 for every $i\in \{1,\ldots,n\}$ we set 
\begin{equation}\label{dconi}
d_i:=\min_{j\in \{1,\ldots,n \}\atop j\neq i}\bigg\{\frac{|\a_i-\a_j|}2,\dist(a_i,\partial \Om)\bigg\}\,,
\end{equation}
where $\mathrm{dist}(\cdot, \partial \Omega)$ is the distance function from $\partial \Omega$.
\begin{theorem}\label{thGcn}
Under the assumptions \eqref{H1} and \eqref{datum}, as $\e\to0$ the functionals $\mathcal F_\e^{(n)}$ defined by \eqref{Fepsn} continuously converge in $\overline\Om{}^n$ to the functional $\mathcal F^{(n)}\colon\overline\Om{}^n\to\R\cup\{+\infty\}$ defined as
\begin{equation}\label{Gammalimitn}
\begin{split}
\mathcal F^{(n)}(\ba)\coloneqq \sum_{i=1}^n\pi\log d_i&+\frac{1}{2}\int_\Om |\nabla v_{\ba}|^2\,\de x+\sum_{i=1}^n\frac12\int_{\Om_{d_i}(a_i)} |\k_{a_i}|^2\,\de x \\
&+\sum_{i=1}^n\int_{\Om_{d_i}(a_i)} \nabla v_{\ba}\cdot \k_{a_i}\,\de x+\sum_{i<j} \int_\Om \k_{a_i}\cdot \k_{a_j}\,\de x,
\end{split}
\end{equation}
if $\ba\in\Om^n\setminus\triangle_n$, 
and $\mathcal F^{(n)}(\ba)\coloneqq +\infty$ otherwise. 
Here $\k_{a_i}(x)\coloneqq \rho_{a_i}^{-1}(x)\hat\theta_{a_i}(x)$, being $(\rho_{a_i}, \theta_{a_i})$ a system of polar coordinates centered at $a_i$, and $v_{\ba}$ solves 
\begin{equation*}
\begin{cases}
\Delta  v_{\ba}=0&\text{ in  }\Omega, \\
v_{\ba}=g-\sum_{i=1}^n\theta_{\a_i} &\text{ on  }\partial\Omega.
\end{cases}  
\end{equation*}
In particular, $\mathcal F^{(n)}$ is continuous in $\overline\Om{}^n$ and diverges to $+\infty$ if either at least one dislocation approaches the boundary or at least two dislocations collide, that is, $\mathcal F^{(n)}(\ba)\to +\infty$ as $d_i\to0$ for some $i$. 
Thus, $\mathcal F^{(n)}$attains its minimum in ${\Om}{}^n\setminus\triangle_n$.
\end{theorem}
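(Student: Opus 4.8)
The plan is to prove the continuous convergence by matching an upper and a lower bound along an arbitrary sequence $\ba^\e\to\ba$, and then to read off the qualitative properties of $\mathcal F^{(n)}$ directly from the explicit expression \eqref{Gammalimitn}. Everything rests on decomposing a candidate field into a regular gradient and the sum of the canonical singular fields $\k_{a_i}=\nabla\theta_{a_i}$. For the upper bound I would take the explicit competitor $F_\ba\coloneqq\nabla v_\ba+\sum_{i=1}^n\k_{a_i}$ and first verify that it is admissible for $\mathcal E_\e^{(n)}(\ba)$: it is curl-free on $\Omega_\e(\ba)$, each $\k_{a_i}$ carries circulation $2\pi$ so that the total circulation around $m$ dislocations equals $2\pi m$, and, since $v_\ba=g-\sum_i\theta_{a_i}$ on $\partial\Om$ with $\partial_\tau g=f$, its tangential trace is $F_\ba\cdot\tau=\partial_\tau g=f$. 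Expanding $\tfrac12\int_{\Omega_\e(\ba^\e)}|F_{\ba^\e}|^2$ into diagonal and off-diagonal contributions, the self-energy of each $\k_{a_i}$ on the annulus $B_{d_i}(a_i)\setminus\overline{B}_\e(a_i)$ equals $\pi\log(d_i/\e)=\pi\log d_i+\pi|\log\e|$, the tail is $\tfrac12\int_{\Om_{d_i}(a_i)}|\k_{a_i}|^2$, and the off-diagonal products $\k_{a_i}\cdot\k_{a_j}$ and $\nabla v_\ba\cdot\k_{a_i}$ are integrable up to the dislocations, so that excising the shrinking disks costs only $o(1)$. The structural simplification I would highlight is the orthogonality $\int_{B_{d_i}(a_i)\setminus\overline{B}_\e(a_i)}\nabla v_\ba\cdot\k_{a_i}=\oint v_\ba\,\k_{a_i}\cdot\nu=0$, which follows from $\div\k_{a_i}=0$ and $\k_{a_i}\cdot\hat\rho=0$ on circles centred at $a_i$; this is exactly why the interaction with $v_\ba$ survives only on $\Om_{d_i}(a_i)$ in the limit. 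Summing and subtracting $\pi n|\log\e|$ yields $\tfrac12\int_{\Omega_\e(\ba^\e)}|F_{\ba^\e}|^2-\pi n|\log\e|\to\mathcal F^{(n)}(\ba)$, whence $\limsup_\e\mathcal F_\e^{(n)}(\ba^\e)\le\mathcal F^{(n)}(\ba)$.

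For the lower bound I would argue on a minimizer $F_\e$ of $\mathcal E_\e^{(n)}(\ba^\e)$, writing $F_\e=\sum_i\k_{a_i^\e}+\nabla w_\e$, which is legitimate since $F_\e-\sum_i\k_{a_i^\e}$ is curl-free with vanishing circulation on $\Omega_\e(\ba^\e)$; minimality forces $\Delta w_\e=0$ together with the natural condition $F_\e\cdot\nu=0$ on the inner circles. Since the off-diagonal and interaction integrals do not involve $w_\e$ and converge as above, the matter reduces to showing that the minimization in $w_\e$ on the perforated domain converges to the one producing $v_\ba$ on all of $\Om$, and that the boundary layers on the vanishing circles cost $o(1)$. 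This is the heart of the argument and, I expect, the main obstacle: the holes both shrink and move as $\ba^\e\to\ba$, so one is comparing harmonic problems on a sequence of varying domains. Here the iterative scheme intervenes: rather than treating the $n$ dislocations at once, I would peel off one dislocation at a time, comparing $\mathcal E_\e^{(n)}$ with an $(n-1)$-dislocation problem and a single-dislocation correction, and iterating the local harmonic corrections near each $\overline{B}_\e(a_i^\e)$ that restore the natural boundary condition. Each correction is concentrated at scale $\e$ and hence of energy $o(1)$, while the iteration converges because the mutual influence of the corrections decays with the inter-dislocation distances; the delicate point is to keep all these errors $o(1)$ uniformly as $\ba^\e\to\ba$, including at limit points on $\triangle_n$ or on $\partial\Om$, where the energies must be shown to diverge.

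The qualitative statements then follow from \eqref{Gammalimitn}. Continuity on $\Om^n\setminus\triangle_n$ is obtained once one checks that $v_\ba$ depends continuously on $\ba$ through its boundary datum and that the integrals over $\Om_{d_i}(a_i)$ depend continuously on $(a_i,d_i)$, every term being finite as long as the dislocations stay distinct and interior. For the divergence as $d_i\to0$ I would treat the two degenerations separately. When two dislocations collide, $a_i\to a_j$ with $a_j$ interior, $v_\ba$ and its interaction terms stay bounded, while the asymptotics $\pi\log d_i+\pi\log d_j\sim2\pi\log|a_i-a_j|$, the self-tails $\sim-2\pi\log|a_i-a_j|$, and the repulsive interaction $\int_\Om\k_{a_i}\cdot\k_{a_j}\sim-2\pi\log|a_i-a_j|$ combine to a net $-2\pi\log|a_i-a_j|\to+\infty$. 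When a dislocation approaches $\partial\Om$, the boundary datum $g-\sum_i\theta_{a_i}$ develops a ramp of height $\pi$ concentrated at scale $\dist(a_i,\partial\Om)$ near the nearest boundary point, so that the harmonic self-energy $\tfrac12\int_\Om|\nabla v_\ba|^2$ blows up; a method-of-images analysis on the locally half-planar domain shows that, for the prescribed-strain boundary condition, the several terms of size $\log(1/\dist(a_i,\partial\Om))$ leave a net positive divergence, so again $\mathcal F^{(n)}(\ba)\to+\infty$. I regard this boundary asymptotics as the most delicate point, because these logarithmic terms nearly cancel and the correct sign has to be extracted. Finally, continuity together with the blow-up at the relative boundary of $\Om^n\setminus\triangle_n$ makes the sublevel sets of $\mathcal F^{(n)}$ compact in $\Om^n\setminus\triangle_n$, and the minimum is attained there by the direct method.
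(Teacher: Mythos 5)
Your treatment of the non-degenerate case (all limit points distinct and interior) is essentially the paper's argument in disguise: the competitor $\nabla v_{\ba}+\sum_i\k_{a_i}$ and the splitting of a minimizer into $\sum_i\k_{a_i^\e}$ plus a curl-free remainder reproduce the decomposition \eqref{recast-2}, and your orthogonality observation $\int_{A_\e^{d_i}(a_i)}\nabla v_{\ba}\cdot \k_{a_i}\,\de x=0$ (from $\div \k_{a_i}=0$ and $\k_{a_i}\cdot\nu=0$ on circles centred at $a_i$) is correct and is exactly how the paper restricts the interaction with $v_{\ba}$ to $\Om_{d_i}(a_i)$. One inaccuracy: the cross terms $\int \nabla w_\e\cdot \k_{a_j^\e}$ \emph{do} involve the regular part, and controlling them on the moving perforated domains is where the conjugate-function and capacity estimates of Lemma~\ref{lemma7giugno} and Lemma~\ref{conservative-n} enter; you gloss over this, but it is fillable along the lines you indicate.

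The genuine gap is the degenerate case. Continuous convergence at a point $\ba$ with $\ba\in\triangle_n$ or with some $a_i\in\partial\Om$ means that $\mathcal F_\e^{(n)}(\ba^\e)\to+\infty$ for \emph{every} sequence $\ba^\e\to\ba$, uniformly in the rate at which the configuration degenerates relative to $\e$ (including the ``fast'' regimes $d(a_i^\e)\le\e$ or $|a_i^\e-a_j^\e|\le\e$). Your closing paragraph instead analyses the divergence of the limit formula \eqref{Gammalimitn} as $d_i\to0^+$ on the non-degenerate set; that is a statement about $\mathcal F^{(n)}$, and it does not imply anything about $\mathcal F_\e^{(n)}(\ba^\e)$ along sequences that actually reach the degenerate set (a diagonal argument only handles specially coupled $\e$ and $\ba^\e$). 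You flag this as ``the delicate point'' but supply no mechanism. The paper's mechanism is twofold: (a) for a limit point on $\partial\Om$, the singular part must carry the weight $\omega(a)=2$ (to balance the $2\pi$ jump of $g$ against the angle $\approx\pi$ swept by $\theta_a$ near the boundary), and the interior cone condition \eqref{H3} together with a Young-inequality splitting yields the quantitative bound $\mathcal F_\e(a^\e)\ge C_1|\log\max\{\e,d(a^\e)\}|+C_2$ with $C_1=\alpha-\pi/2>0$ (Proposition~\ref{prop2}); your uniform decomposition with weight $1$ misses this. (b) For collisions, the iterative scheme of Proposition~\ref{ultima-prop} is \emph{not} the ``peel off one dislocation at a time'' procedure you describe: it merges clusters of mutually close dislocations into single points of higher multiplicity while enlarging the core radius to $\eta(\e)$, loses only a bounded constant at each of at most $n^2$ merges (Lemma~\ref{lemmafondamentale1}), and then extracts the divergence $\pi\big(\sum_i m_i^2-n\big)|\log\eta(\e)|$ from the multiplicity excess, absorbing the interaction terms by choosing the Young parameter small. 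Without (a) and (b), or genuine substitutes, the hardest part of Theorem~\ref{thGcn} remains unproved.
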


We notice that, by rotating the vector fields of $\pi/2$,  \eqref{Gammalimitn} can also be expressed as the sum of two terms: the \emph{self energy}  $\mathcal E_\text{self}$, responsible for the contribution of individual dislocations, and the \emph{interaction energy} $\mathcal E_\text{int}$, depending on 
the mutual position of two dislocations. In formulas, 
\[
\mathcal E_\text{self}(a_i)\coloneqq\pi\log \mathrm{dist}(a_i,\partial \Omega)+\frac{1}{2}\int_{\Om_{d(\a)}(\a)}|\nabla \phi_i + \nabla w_i|^2\, \de x+\frac{1}{2}\int_{B_{d(\a)}(\a)}|\nabla w_i|^2\,\de x,
\]
and
\[
\mathcal E_\text{int}(a_i,a_j) \coloneqq\int_\Omega (\nabla \phi_i+\nabla w_i)\cdot(\nabla \phi_j+\nabla w_j)\,\de x.
\]
Here, $\phi_i$ and $w_i$ are the solutions (with $w_i$ determined up to an additive constant) to
\begin{equation*}
\begin{cases}
\Delta \phi_i=2\pi\delta_{a_i} & \text{in $\Omega$,} \\
\phi_i(x)=\log|x-a_i| & \text{on $\partial\Omega$,}
\end{cases}
\qquad\text{and}\qquad
\begin{cases}
\Delta w_i=0 & \text{in $\Omega$,} \\
\partial_\nu w_i(x)=\tfrac1n {f}-\partial_\nu\phi_i & \text{on $\partial\Omega$.}
\end{cases}
\end{equation*}

A consequence of Theorem \ref{thGcn} is that also the energies \eqref{energy-n} attain their minimum in 
$\Om{}^n$ at an $n$-tuple of well separated points.
\begin{corollary}\label{confinon}
Under the assumptions \eqref{H1} and \eqref{datum}, there exists $\ezero>0$ such that, for every $\e\in(0,\ezero)$, the infimum problem
\begin{equation}\label{LMSZ2}
\inf\{\mathcal E_\e^{(n)}(\ba): \ba \in \overline\Omega{}^n\},
\end{equation}
admits a minimizer only in $\Om{}^n\setminus\triangle_n$.
Moreover, if $\ba^\eps \in \Omega^n\setminus\triangle_n$ is a minimizer for \eqref{LMSZ2}, then (up to subsequences) we have $\ba^\eps\to\ba$ and $\mathcal F_\e^{(n)}(\ba^\e)\to \mathcal F^{(n)}(\ba)$, as $\e\to 0$, where $\ba$ 
is a minimizer of the functional $\mathcal F^{(n)}$ defined in \eqref{Gammalimitn}. 
In particular, for $\e$ small enough, all the minimizers of problem \eqref{LMSZ2} are $n$-tuples of distinct points that stay uniformly (with respect to $\e$) far away from the boundary and from one another. 
\end{corollary}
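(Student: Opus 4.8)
The plan is to deduce the statement from the continuous convergence and the properties of the limit $\mathcal F^{(n)}$ recorded in Theorem \ref{thGcn}, exploiting that $\overline\Omega^n$ is compact and that minimizing $\mathcal E_\e^{(n)}$ over $\overline\Omega^n$ is equivalent to minimizing $\mathcal F_\e^{(n)}$, since by \eqref{Fepsn} the two functionals differ by the additive constant $\pi n|\log\e|$, which is independent of $\ba$.

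First I would prove an equicoercivity estimate near the singular set $\Sigma\coloneqq\{\ba\in\overline\Omega^n:\min_i d_i(\ba)=0\}$, i.e.\ the configurations with a dislocation on $\partial\Omega$ or two coinciding dislocations, on which $\mathcal F^{(n)}\equiv+\infty$. Let $\ba^*\in\Omega^n\setminus\triangle_n$ be a minimizer of $\mathcal F^{(n)}$, whose existence is granted by Theorem \ref{thGcn}, and set $m\coloneqq\mathcal F^{(n)}(\ba^*)$. I claim there exist $\delta_0\in(0,\min_i d_i(\ba^*))$ and $\e_1>0$ such that $\mathcal F_\e^{(n)}(\ba)>m+1$ whenever $\min_i d_i(\ba)<\delta_0$ and $\e<\e_1$. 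Indeed, were this false, there would exist $\e_k\to0$ and $\ba^k$ with $\min_i d_i(\ba^k)\to0$ but $\mathcal F_{\e_k}^{(n)}(\ba^k)\le m+1$; by compactness $\ba^k\to\bar\ba$ along a subsequence, and since $\ba\mapsto\min_i d_i(\ba)$ is continuous on $\overline\Omega^n$ we would get $\bar\ba\in\Sigma$, hence $\mathcal F^{(n)}(\bar\ba)=+\infty$; continuous convergence would then force $\mathcal F_{\e_k}^{(n)}(\ba^k)\to+\infty$, a contradiction.

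With $\delta_0$ fixed, consider the compact good set $K\coloneqq\{\ba\in\overline\Omega^n:\min_i d_i(\ba)\ge\delta_0\}\subset\Omega^n\setminus\triangle_n$. Testing the infimum with the constant sequence $\ba^*$ and using continuous convergence gives $\inf_{\overline\Omega^n}\mathcal F_\e^{(n)}\le\mathcal F_\e^{(n)}(\ba^*)\to m$, so there is $\e_2>0$ with $\inf_{\overline\Omega^n}\mathcal F_\e^{(n)}<m+\tfrac12$ for $\e<\e_2$. Setting $\e_0\coloneqq\min\{\e_1,\e_2,\delta_0\}$, for $\e<\e_0$ the infimum over $\overline\Omega^n$ coincides with that over $K$ (points outside $K$ have energy exceeding $m+1$), so any minimizer must lie in $K$, hence in $\Omega^n\setminus\triangle_n$, with $\min_i d_i\ge\delta_0$ uniformly in $\e$. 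Existence of a minimizer then follows from continuity of $\ba\mapsto\mathcal E_\e^{(n)}(\ba)$ on $K$ (where, for $\e<\delta_0$, the holes $\overline{B}_\e(a_i)$ are pairwise disjoint and compactly contained in $\Omega$): I would obtain this by transporting admissible fields through a diffeomorphism of $\overline\Omega$ that maps one configuration to a nearby one and equals the identity near $\partial\Omega$, so that both the tangential boundary condition $F\cdot\tau=f$ and the circulation constraint are preserved while the energies differ by a quantity vanishing as the configurations approach each other. This domain-perturbation step is the one I expect to require the most care.

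Finally, for the convergence of minimizers, let $\ba^\e$ minimize $\mathcal E_\e^{(n)}$ (equivalently $\mathcal F_\e^{(n)}$); by the above $\ba^\e\in K$. Compactness yields, along a subsequence, $\ba^\e\to\ba\in K\subset\Omega^n\setminus\triangle_n$, and continuous convergence gives $\mathcal F_\e^{(n)}(\ba^\e)\to\mathcal F^{(n)}(\ba)$. For any competitor $\tilde\ba\in\Omega^n\setminus\triangle_n$, minimality gives $\mathcal F_\e^{(n)}(\ba^\e)\le\mathcal F_\e^{(n)}(\tilde\ba)$, and passing to the limit on both sides via continuous convergence yields $\mathcal F^{(n)}(\ba)\le\mathcal F^{(n)}(\tilde\ba)$; since $\mathcal F^{(n)}=+\infty$ on $\Sigma$, this proves $\mathcal F^{(n)}(\ba)=\min_{\overline\Omega^n}\mathcal F^{(n)}=m$, so $\ba$ is a minimizer of $\mathcal F^{(n)}$ and $\mathcal F_\e^{(n)}(\ba^\e)\to\mathcal F^{(n)}(\ba)$. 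The asserted uniform separation from the boundary and from one another is precisely the bound $\min_i d_i(\ba^\e)\ge\delta_0$, which holds for every $\e<\e_0$.
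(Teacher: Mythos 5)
Your proof is correct and follows essentially the same route as the paper's: both rest on the continuity of $\mathcal E_\e^{(n)}$ on configurations with $\min_i d_i\ge\delta$ (obtained via the same diffeomorphism argument), on the continuous convergence of Theorem~\ref{thGcn}, and on a compactness-plus-contradiction argument producing a uniform $\delta_0>0$ that keeps minimizers away from $\triangle_n$ and $\partial\Omega$. The only difference is organizational: you establish a uniform lower bound for $\mathcal F_\e^{(n)}$ on the bad set $\{\min_i d_i<\delta_0\}$ directly, whereas the paper runs a diagonal argument on approximate minimizers $\ba_{\e,k}$ and invokes \cite[Corollary 7.20]{dalmaso} for the convergence of minimizers, a step you instead spell out by hand.
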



Throughout the paper, we will always assume \eqref{H1} and \eqref{datum}, even if it is not explicitly stated.
We stress that the convexity and regularity assumptions on $\Omega$ stated in \eqref{H1} provide a 
\emph{uniform interior cone condition} of angle between $\pi/2$ and $\pi$, i.e., 
\begin{equation}\label{H3}
\begin{split}
& \text{there exist $\pi/2<\alpha<\pi$ and $\ea>0$ such that for every $\b\in\partial\Om$ the disk $B_{\ea}(b)$} \\
& \text{meets $\partial\Omega$ at two points $b_1$ and $b_2$ forming an angle at least $\alpha$ with $\b$.}
\end{split}
\end{equation}
We point out that convexity and regularity play different roles: the former is conveniently assumed in order to simplify the exposition of the results (in fact, it can be removed without changing their essence); the latter, on the other hand, is fundamental in our proofs.
Finally, we observe that the boundary condition of Dirichlet type $F\cdot\tau=f$, with $f$ as in \eqref{datum}, is fundamental to keep the dislocations confined inside the material.  
In fact, the natural boundary conditions of Neumann type imply that the dislocations migrate to the boundary and leave the domain, since, in such a case, the Dirichlet energy of the system decreases as the dislocations approach $\partial\Om$ (see, e.g., \cite{BFLM,HM}).

A key feature in our analysis is the rescaling introduced in \eqref{Fepsn} (see also \cite{ADLGP}),
which is related to the so-called \textit{Hadamard finite part} of a divergent integral (see \cite{Had}). 
Such type of asymptotic analysis has the advantage of keeping into account the energetic dependence on the position of the dislocations $\ba\in\Om^n$, whereas it is well-known that the standard rescaling obtained by dividing the energy by $|\log\eps|$ gives rise to an energy which only counts the number of dislocations in the bulk (see again \cite{ADLGP}).

Some
results close to those presented in this paper 
can be found in the literature about Ginzburg-Landau vortices (see \cite{BBH} and also \cite{SaSe}). In this respect, the present paper can be considered as a self-contained presentation of the asymptotic results for the energy \eqref{Fepsn}, presented in a language that is  familiar to the dislocation community,
targeted, in particular, to applied mathematicians and continuum mechanists. The statement of Theorem~\ref{thGcn} is in fact similar to that of \cite[Theorems~I.9 and I.10]{BBH}, but its proof is based on an original iterative procedure (close, in spirit, to some combinatoric algorithms) which makes it quite different and useful for numerical applications. The need for such an algorithm is dictated by the fact that in the core-radius approach we allow for the cores around each dislocation to intersect with one another and with the boundary of the domain, which was avoided in \cite{BBH} by introducing a safety radius.
Moreover, we set here the bases and the notation for tackling the more challenging problem of the upscaling of the system of dislocations. 
In \cite{LMSZupscaling} we study the limit as $n\to\infty$ and obtain a limit energy functional defined on measures describing the distribution of dislocations in the material.

\smallskip

{Section~\ref{preliminaries} sets the notation and presents some preparatory results.}
{Section~\ref{aux} contains a result on the properties of $\k$ and on some a priori bounds for harmonic functions.}
Section~\ref{twomeglcheone} is devoted to proving Theorem~\ref{thGcn} and Corollary~\ref{confinon}. {Section~\ref{numerica} contains numerical plots of $\mathcal F^{(n)}$ in $\Omega=B(0,1)$ under different boundary conditions.

\section{Preliminaries}\label{preliminaries}
In Subsection~\ref{notation} we introduce the notation used throughout the paper. Then, in view of the core radius approach, in Subsection \ref{ss-dfn}  we rewrite the energy $\mathcal{E}_\e^{(n)}$ in terms of the displacement of a regular function.
\subsection{Notation}\label{notation}
\begin{itemize}[leftmargin=*]
\item[-] $B_r(x)$ denotes the open disk of radius $r>0$ centered at $x\in\R^2$; $\overline B_r(x)$ is its closure;
\item[-] for $n\geq1$, and for $x_1,\ldots,x_n\in\overline\Omega$, we denote $\bx\coloneqq(x_1,\ldots,x_n)$ whenever there is no risk of misunderstanding; the symbol $\Om_r(\bx)$ denotes the open set
$$\Om_r(\bx):=\Om\setminus\bigg(\bigcup_{i=1}^n \overline B_r(x_i)\bigg);$$
$\overline\Om_r(\bx)$ is its closure;
\item[-] the function $\R^2\ni x\mapsto\dist(x,E)$ denotes the distance of $x$ from a set $E\subset\R^2$; in the particular case $E=\partial\Om$, we define $d(x)\coloneqq \dist(x,\partial\Om)$;
\item[-] we denote $\Omega^{(r)}\coloneqq\{x\in\Omega : d(x)>r\}$ and by $\overline\Omega{}^{(r)}$ its closure;
\item[-] $\Omega^n$ denotes the cartesian product of $n$ copies of $\Omega$ and $\overline\Omega{}^n$ its closure;
\item[-] for $0<r<R$, $A_r^R(x)\coloneqq B_R(x)\setminus\overline B_r(x)$ denotes the open annulus of internal radius $r$ and external radius $R$ centered at $x\in\R^2$;
\item[-] $\chi_E$ denotes the characteristic function of $E$: $\chi_E(x)=1$ if $x\in E$; $\chi_E(x)=0$ if $x\notin E$;
\item[-] given a set $E$ with piecewise $C^1$ boundary, $\nu$ and $\tau$ denote the outer unit normal and the tangent unit vectors to $\partial E$, respectively; 
\item[-] $\mathrm{diam}\, E$ denotes the diameter of a set $E\subset\R^2$;
\item[-] given $x=(x_1,x_2)\in\R^2$, we denote by $x^\perp\coloneqq(-x_2,x_1)$ the rotated vector;
\item[-] given $x\in\R^2$, we define $(\rho_x,\theta_x)$ as the standard polar coordinate system centered at $x$; 
$\hat\rho_x$ and $\hat\theta_x$ denote the corresponding unit vectors;
\item[-] given $x\in\R^2$, we denote by $\k_x$ the vector field 
$\k_x:=\rho_x^{-1}\hat\theta_x$. It is easy to see that (\cite{muskhelishvili})
\begin{equation}\label{euler-k}
\begin{cases}
\div \k_x=0& \text{in $\mathcal{D}'(\R^2)$,} \\
\curl \k_x=2\pi\delta_{x}& \text{in $\mathcal{D}'(\R^2)$,} \\
\k_x\cdot\nu=0 & \text{on $\partial B_r(x)$, for any $r>0$.}
\end{cases}
\end{equation}
Notice that $\k_x$ is the absolutely continuous part of the gradient of the function $\theta_x$, and the jump set of $\theta_x$ is the half line starting from $x$ and passing through $y$ with $[\theta_x]=2\pi$ across it;
\item[-] we define the function $\omega\colon\overline\Om \to \{1,2\}$ by
\begin{equation}\label{omeghina}
\omega(a)=\begin{cases}
1 & \text{if $a\in\Omega$,} \\
2 & \text{if $a\in\partial\Omega$;}
\end{cases}
\end{equation} 
\item[-] $\spt \varphi$ denotes the support of the function $\varphi$; 
\item[-] given $x\in\R^2$, $\delta_x$ is the Dirac measure centered at $x$;
\item[-] $\mathcal H^1$ denotes the one-dimensional Hausdorff measure;
\item[-] the letter $C$ alone represents a generic constant (possibly depending on $\Omega$) whose value might change from line to line. 
\end{itemize}

\subsection{Displacement formulation}\label{ss-dfn}
We start by introducing the multiplicity of a dislocation.

\begin{definition}[multiplicity of a dislocation]\label{def-mi}
Let $n\in \mathbb N$ with $n\geq 2$ and let $\a_1,\dots,\a_n\in  \overline\Om$. 
We label the dislocations in such a way that the first $\ell$ of them ($\ell\leq n$) are all distinct, so that $a_i\neq a_k$ for all $i,k\in\{1,\ldots,\ell\}$, $i\neq k$, and, for every $j\in\{\ell+1,\ldots,n\}$ there exists $i(j)\in\{1,\ldots,\ell\}$ such that $a_j=a_{i(j)}$.
For every $i=1,\dots,\ell$ we say that a point $a_i$ has \emph{multiplicity} $m_i$ if there are $m_i-1$ points $a_j$, with $j\in\{\ell+1,\ldots,n\}$ that coincide with $a_i$.
Clearly, $\sum_{i=1}^\ell m_i=n$.
\end{definition}
Given $n$ dislocations $a_1,\ldots,a_n\in\overline \Om$, we choose $n$ (closed) segments $\Sigma_1,\ldots,\Sigma_n$ joining the dislocations with the boundary, such that  
$\Om\setminus(\Sigma_1\cup\cdots\cup\Sigma_n)$ is simply connected. 
One possible construction of such a family of segments is to fix a point $a^*\notin \overline\Omega$ and take $\Sigma_i$ as the portion of the segment joining $a_i$ with $a^*$ lying inside $\overline\Omega$, for $i=1,\ldots,\ell$.
With this construction, if $a_i=a_j$, then $\Sigma_i=\Sigma_j$.
Moreover, we set $b_i:=\Sigma_i\cap\partial\Om$, for $i=1,\ldots,\ell$. Given 
$b\in\partial\Om\setminus\{b_1,\ldots,b_\ell\}$, we  denote by $\gamma_b^x$ the counterclockwise path in $\partial \Omega$ connecting $b$ and $x$. Such parametrization induces an ordering on the points of the boundary: we say that $x$ precedes $y$ on $\partial \Omega$, and we write $x\preceq y$, if the support of $\gamma_b^{y}$ contains that ot $\gamma_b^x$.
Without loss of generality, we may relabel the $b_i$'s (and the $a_i$'s, accordingly) so that $b\prec b_1\preceq \ldots \preceq b_\ell$.
According to this notation, for $i=1,\ldots,\ell$, we define $g_{b_i}:\partial \Omega\setminus \{b_i\}\to \re$ as
\begin{equation*}
g_{b_i}(x):=\begin{cases}
\displaystyle\frac1n\int_{\gamma_b^x}f\, \de t\quad & \hbox{if } b \preceq x \prec b_i, 
\\
\displaystyle\frac1n\int_{\gamma_b^x}f\, \de t - 2\pi \quad & \hbox{if } b_i\prec x \prec b.
\end{cases}
\end{equation*}
Moreover, we introduce 
\begin{equation}\label{sum-gi}
g(x):= \sum_{i=1}^\ell m_i g_{b_i}(x)= \int_{\gamma_b^x} f(y)\, \de y - 2\pi
\sum_{i=0}^{j-1} m_i \qquad \hbox{if } b_{j-1} \prec x \prec b_{j}, 
\end{equation}
with $j\in \{1,\ldots,\ell+1\}$, $b_0=b_{\ell+1}:=b$, and $m_0:=0$. Note that, since $g_{b_i}$ is continuous except at $b_i$, where it has a jump of $2m_i\pi$, we have that the boundary datum $g$ has a jump of $2\pi m_i$ 
at every $b_i$ (whereas it is continuous at $b$).

In view of the construction above, for every $\e>0$, every connected component of $\Om_\e(\ba)\setminus (\Sigma_1\cup\ldots\cup \Sigma_n)$ is simply connected. 
Therefore, the energy \eqref{energy-n} can be expressed as 
\begin{equation}\label{recast-1}
\mathcal E_\eps^{(n)}(\ba)=\frac12\int_{\Omega_\eps(\ba)} |\nabla u^\e_{\ba}|^2\, \de x,  
\end{equation}
where $u^\e_{\ba}\in H^1(\Omega_\eps(\ba))$ is characterized by
\begin{equation}\label{euler-n}
\begin{cases}
\Delta u^\e_{\ba} = 0 & \hbox{in }\Omega_\eps(\ba)\setminus(\cup_{i=1}^n \Sigma_i),
\\
[u^\e_{\ba}]= 2\pi m_i \ &\hbox{on }\Sigma_i \cap \Omega_\eps(\ba),
\\
u^\e_{\ba} = \sum_{i=1}^\ell m_ig_{b_i}=
g & \hbox{on } \partial \Om\setminus \cup_{i=1}^n \overline{B}_\e(\a_i),
\\
{\partial u^\e_{\ba}}/{\partial \nu} = 0 \ &  \hbox{on }\partial \Om_\e(\ba)\setminus \partial\Omega,
\\
\partial (u^\e_{\ba})^+/\partial \nu = \partial (u^\e_{\ba})^-/\partial \nu & \hbox{on } (\cup_{i=1}^n \Sigma_i) \cap \Omega_\eps(\ba).
\end{cases}
\end{equation}
We point out that the expression \eqref{recast-1} is consistent with the definition of the energy in \eqref{energy-n}; in particular, when $a_i\equiv a$ for every $i=1,\ldots,n$, choosing $b_i\equiv b$, the datum $g$ in \eqref{sum-gi} is given by $g(x)=\int_{\gamma_\b^x} f(y)\, \de y$,
and we have  $\mathcal E_\e^{(n)}(a,\ldots,a)= n^2 \mathcal E_\e^{(1)}(a)$. 
Therefore, by \eqref{Fepsn}, 
for every $\epsilon$ small enough 
\begin{equation}\label{ecco}
\mathcal F_\eps^{(n)}(\underbrace{\a,\dots,\a}_{n\text{-times}})=n^2 \mathcal F_\epsilon^{(1)}(a)+n(n-1)\pi|\log \eps|.
\end{equation}

The following lemma shows that the construction of the $\Sigma_i$'s described above is arbitrary.
\begin{lemma}\label{lem-data}
The functional $\mathcal{E}_\e^{(n)}$ does not depend on the choice of the discontinuity points $\b_i$'s, nor the primitive $g$ of $f$, nor the cuts $\Sigma_i$'s.
\end{lemma}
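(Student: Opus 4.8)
The plan is to exploit the fact that the original definition of the energy in \eqref{energy-n} is intrinsic: the admissible class $\spazio_\eps(\ba)$ and the constraint $F\cdot\tau=f$ make no reference whatsoever to the discontinuity points $b_i$, to the primitive $g$, or to the cuts $\Sigma_i$. Consequently, it suffices to show that, \emph{for every} admissible choice of these auxiliary objects, the displacement formulation \eqref{recast-1}--\eqref{euler-n} reproduces exactly the value \eqref{energy-n}; the independence claimed in the statement then follows at once.

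First I would establish that the minimum in \eqref{energy-n} is attained at a \emph{unique} field. Indeed, the curl-free condition together with the prescribed circulations $\langle F\cdot\tau,1\rangle_\gamma=2\pi m$ and the trace condition $F\cdot\tau=f$ on $\partial\Om\setminus\bigcup_i\overline B_\eps(a_i)$ cut out a closed affine subspace of $L^2(\Omega_\eps(\ba);\re^2)$, namely a translate of the linear space $V$ obtained by replacing the circulations by $0$ and $f$ by $0$. Since $F\mapsto\frac12\int_{\Omega_\eps(\ba)}|F|^2\,\de x$ is strictly convex and coercive on this subspace, it admits a unique minimizer $F^*$, characterized by the orthogonality $\int_{\Omega_\eps(\ba)}F^*\cdot\delta F\,\de x=0$ for every $\delta F\in V$.

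Next I would check that, for any choice of $b_i$, $g$, and $\Sigma_i$, the field $F:=\nabla u^\e_{\ba}$ — understood as the absolutely continuous, a.e.-defined gradient, extended across the cuts — coincides with $F^*$. The verification splits into \emph{admissibility} and \emph{optimality}. For admissibility, the crucial point is that, since $[u^\e_{\ba}]=2\pi m_i$ is \emph{constant} along $\Sigma_i$, the tangential derivative $F\cdot\tau=\partial_\tau u^\e_{\ba}$ does not jump across $\Sigma_i$, while continuity of the normal derivative is exactly the last condition in \eqref{euler-n}; thus $F$ carries no distributional curl on the cuts and is curl-free elsewhere by harmonicity, so $F\in L^2$ and $\curl F=0$ in $\mathcal D'(\Omega_\eps(\ba))$. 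The circulation condition follows by integrating $F\cdot\tau$ along a curve $\gamma$ winding once around $m$ dislocations, which picks up precisely the sum $2\pi m$ of the jumps of the enclosed cuts; and the boundary trace condition reduces to $\partial_\tau u^\e_{\ba}=\partial_\tau g=f$ by the very definition \eqref{sum-gi} of $g$ as a primitive of $f$ (insensitive to additive constants). For optimality, the remaining relations in \eqref{euler-n}, i.e.\ harmonicity in $\Omega_\eps(\ba)\setminus\bigcup_i\Sigma_i$ and the homogeneous Neumann condition on $\partial\Omega_\eps(\ba)\setminus\partial\Omega$, express, after integration by parts, exactly the orthogonality of $F$ to all variations in $V$. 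By the uniqueness established above, $F=F^*$, whence $\frac12\int_{\Omega_\eps(\ba)}|\nabla u^\e_{\ba}|^2\,\de x=\frac12\int_{\Omega_\eps(\ba)}|F^*|^2\,\de x=\mathcal E_\eps^{(n)}(\ba)$, independently of the choices made.

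I expect the main obstacle to be the careful distributional bookkeeping across the cuts: one must argue rigorously that a function with a \emph{constant} jump across $\Sigma_i$, harmonic on either side with matching normal derivatives, has an a.e.\ gradient whose distributional curl vanishes on all of $\Omega_\eps(\ba)$ — so that the cuts are genuinely invisible to $F$ — and, correspondingly, that the pairing $\langle F\cdot\tau,1\rangle_\gamma$ is well defined in the $H^{-1/2}(\gamma)$--$H^{1/2}(\gamma)$ duality and equals the total enclosed circulation. Once this correspondence between multi-valued potentials and single-valued curl-free fields is set up cleanly, the asserted independence is a direct consequence of the uniqueness of $F^*$.
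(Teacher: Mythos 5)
Your argument is correct in substance but follows a genuinely different route from the paper's. The paper fixes the displacement formulation \eqref{energy3} and compares two choices of $(b,g,\Sigma)$ directly, by exhibiting an explicit energy-preserving bijection between the two admissible classes: if $S$ is the sector (or, for curved cuts, the union of regions $S_i$) swept between $\Sigma$ and $\Sigma'$, then $u\mapsto u+2\pi\chi_S$ maps competitors to competitors and leaves $|\nabla u|$ unchanged a.e.; the case of two primitives with the same jump point is handled by noting they differ by a constant, and everything is reduced to $n=1$ first. You instead anchor both formulations to the intrinsic problem \eqref{energy-n}: you prove uniqueness of its minimizer $F^*$ by strict convexity on a closed affine subspace of $L^2$, and then show that for \emph{every} choice of auxiliary data the a.e.\ gradient $\nabla u^\e_{\ba}$ is admissible and satisfies the Euler--Lagrange orthogonality, hence equals $F^*$. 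Your route buys more: it simultaneously justifies the identity \eqref{recast-1} between \eqref{energy-n} and the displacement formulation, which the paper asserts without proof, and it avoids the combinatorial bookkeeping of regions between two families of cuts. What it costs is the functional-analytic overhead that the paper's elementary $H^1$ construction sidesteps: closedness of the constraint set requires the continuity of the tangential trace and of the pairing $\langle F\cdot\tau,1\rangle_\gamma$ under $L^2$ convergence of curl-free fields (the divergence-measure-field machinery of \cite{ChenFrid}), and nonemptiness of the admissible class still needs an explicit competitor.

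One step deserves more care than you give it. In the optimality verification, the variations $\delta F\in V$ are gradients $\nabla\psi$ of single-valued potentials whose tangential derivative vanishes on $\partial\Om\setminus\bigcup_i\overline B_\e(a_i)$; this forces $\psi$ to be constant on each connected \emph{arc} of that set, but the constants may differ from arc to arc. After integrating by parts, orthogonality therefore requires $\int_{\Gamma_k}\partial_\nu u^\e_{\ba}\,\de\mathcal H^1=0$ on each arc $\Gamma_k$ separately, which follows from the divergence theorem only when $\partial\Om\setminus\bigcup_i\overline B_\e(a_i)$ is connected. In degenerate configurations where several disks meet $\partial\Om$ and disconnect it, the Dirichlet condition $u^\e_{\ba}=g$ in \eqref{euler-n} is strictly stronger than $F\cdot\tau=f$, and the identification $\nabla u^\e_{\ba}=F^*$ is no longer automatic. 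Either restrict the argument to the nondegenerate case (which is all the paper uses quantitatively) or add the per-arc flux conditions explicitly; as written, this is the one genuine gap in an otherwise sound and arguably cleaner proof.
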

\begin{proof}
It is enough to prove the result for $n=1$.
Let $\Sigma$ be a simple smooth curve in $\overline\Om$ connecting $a$ with $b$, intersecting $\partial B_\e(\a)$ at a single point and $\partial\Om$ only at $b$. 
Notice that each connected component of $\oea\setminus\Sigma$ is simply connected.
The condition $\curl F=0$ in $\oea$ implies that there exists $u\in H^1(\oea\setminus \Sigma)$ such that $ F=\nabla u$ in $\oea\setminus\Sigma$.
Therefore, the energy \eqref{recast-1} can be expressed as
\begin{equation}\label{energy3}
\mathcal{E}_\e^{(1)}(a)=\min\Bigg\{\frac{1}{2}\int_{\oea}|\nabla u|^2\de x\, : \,\text{$u \in H^1(\oea\setminus \Sigma)$, $u=  g$ on $\partial \Om\setminus \overline{B}_\e(\a)$}\Bigg\}. 
\end{equation}
To show that the energy above does not depend on the choice of the discontinuity point $\b$, nor on the primitive $g$ of $f$, nor on the curve $\Sigma$, we argue as follows.

Let $g$ and $g'$ be two primitives of $f$ with the same discontinuity point. Then the two boundary data differ by a constant. The same holds true for the corresponding minimizers of \eqref{energy3}, which have the same gradient and the same energy.

Let now assume that $g$ and $g'$ have two different discontinuity points, say $b$ and $b'$. Denote by $\Sigma$ and $\Sigma'$ the segments joining $\a$ and $\b$, and $\a$ and $b'$, respectively. Let $\phi$ and $\phi'$ be the angles associated with the discontinuity points $\b$ and $\b'$, respectively, in the angular coordinate centered at $\a$. It is not restrictive to assume that $\phi<\phi'$.
Let $u$ be the solution to \eqref{euler-u} associated with $g$, $\b$, and $\Sigma$; 
define $v:= u + 2 \pi \chi_S$, where $S$ is the subset of $\oea$ in which $\theta_\a \in (\phi,\phi')$. It is easy to see that $v$ is admissible for \eqref{energy3} associated with $g'$, $\b'$, and $\Sigma'$.
In particular, $v$ is the solution to \eqref{euler-u} associated with $g'$, $\b'$, and $\Sigma'$; moreover,
$$
\frac12\int_{\Omega_\e(\a)} |\nabla u|^2 \,\de x = \frac12\int_{\Omega_\e(\a)} |\nabla v|^2\,\de x.
$$
This concludes the proof of the invariance with respect to the discontinuity point in the case of a straight cut.

For general curves $\Sigma$ and $\Sigma'$, the region in $\oea$ lying between $\Sigma$ and $\Sigma'$ is the union of some simply connected sets $S_i$, $i\geq1$, bounded by portions of $\Sigma$ and $\Sigma'$.
In this case, the same strategy applies, provided one adds or subtracts $2\pi\chi_{S_i}$, according to whether the portions of $\Sigma$ bounding $S_i$ preceeds or follows that of $\Sigma'$ in the positive orientation of the angular coordinate.
This concludes the proof.
\end{proof}

Notice that the functional \eqref{recast-1} and the system \eqref{euler-n} are well defined even in the case that there exists $i\in\{1,\ldots,n\}$ such that $a_i\in\partial\Omega$.
In this case, Lemma \ref{lem-data} allows us to choose the discontinuity point $b_i=a_i$ and the cut $\Sigma_i=\emptyset$.

We may write $\nabla  u^\e_{\ba}$ as the sum of a singular and a regular part
\begin{equation}\label{recast-2}
\nabla  u^\e_{\ba} = \sum_{i=1}^n \omega(a_i) \k_{a_i} +\nabla \barv_{\ba},
\end{equation}
where the $\k_{a_i}$ satisfy \eqref{euler-k} and $\omega$ is defined in \eqref{omeghina}.
Here we impose that the angular coordinate $\theta_{a_i}$ centered at $a_i$ which defines $\k_{a_i}$ has a jump of $2\pi$ on the half line containing $\Sigma_i$ if $a_i\in \Om$, and on the half line containing the outer normal to the boundary at $a_i$ in case $a_i\in \partial \Omega$. 
The function $\barv_{\ba}\in H^1(\Om_\e(\ba))$ solves 
\begin{equation}\label{zucco}
  \begin{cases}
  \Delta \barv_{\ba}=0&\text{ in  }\Om_\eps(\ba),\\
	\barv_{\ba}=\sum_{i=1}^n \big(g_{b_i} - \omega(a_i) \theta_{a_i}\big)&\text{ on  }\partial\Omega\setminus\cup_{i=1}^n \overline{B}_{\e}(a_i), \\
{\partial \barv_{\ba}}/{\partial \nu}=-\sum_{i=1}^n \k_{a_i}\cdot \nu&\text{ on  } \partial \Om_\e(\ba)\setminus \partial \Om.
 \end{cases}
\end{equation}
The weight $\omega(a_i)$ is necessary in order to balance the jump of $g_{b_i}$ at $b_i$.
Using \eqref{recast-2}, we can write the energy \eqref{recast-1} as 
\begin{equation*}
\mathcal E_\eps^{(n)}(\ba)=\frac12\int_{\Omega_\eps(\ba)} \Big|\sum_{i=1}^n \omega(a_i) \k_{\a_i}+\nabla \barv_{\ba}\Big|^2\de x.
\end{equation*}

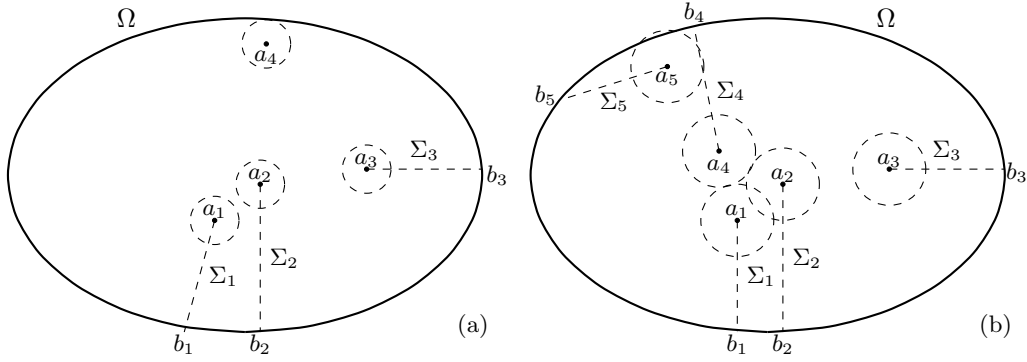
\begin{figure}
\begin{tikzpicture}[scale=0.4]

\draw[thick, domain=-3.141: 3.141, smooth, scale=1.3]plot({6*sin(\x r)}, {4*cos(\x r)}) node at (-3,4) {$\Omega$};

\coordinate (a1) at (-1,-1.5) node at (-1,-1.2) {\small $a_1$};
\coordinate (a2) at (0.5,-0.3) node at (0.5,0) {\small $a_2$};
\coordinate (a3) at (4,0.2) node at (4,0.5) {\small $a_3$};
\coordinate (a4) at (0.7,4.35) node at (0.7,3.9) {\small $a_4$};
\coordinate (b1) at (-2,-5.2) node at (-2,-5.6) {\small $b_1$};
\coordinate (b2) at (0.5,-5.2) node at (0.5,-5.6) {\small $b_2$};
\coordinate (b3) at (7.8,0.2) node at (8.3,0) {\small $b_3$};

\draw [fill=black] (a1) circle (2pt);
\draw [fill=black] (a2) circle (2pt);
\draw [fill=black] (a3) circle (2pt);
\draw [fill=black] (a4) circle (2pt);

\draw [dashed] (a1) circle (0.8); 
\draw [dashed] (a2) circle (0.8); 
\draw [dashed] (a3) circle (0.8); 
\draw [dashed] (a4) circle (0.8); 

\draw [dashed] (a1) --node[right] {\small $\Sigma_1$} (b1);
\draw [dashed] (a2) --node[right] {\small $\Sigma_2$} (b2);
\draw [dashed] (a3) --node[above] {\small $\Sigma_3$} (b3);

\coordinate (L) at (7.5,-5) node at (7.5,-5) {\small (a)};

\end{tikzpicture} 
\begin{tikzpicture}[scale=0.4]
\draw[thick, domain=-3.141: 3.141, smooth, scale=1.3]plot({6*sin(\x r)}, {4*cos(\x r)}) node at (3,4) {$\Omega$};

\coordinate (a1) at (-1,-1.5) node at (-1,-1.2) {\small $a_1$};
\coordinate (a2) at (0.5,-0.3) node at (0.5,0) {\small $a_2$};
\coordinate (a3) at (4,0.2) node at (4,0.5) {\small $a_3$};
\coordinate (a4) at (-3.3,3.6) node at (-3.3,3.2) {\small $a_5$};
\coordinate (a5) at (-1.6,0.8) node at (-1.6,0.3) {\small $a_4$};
\coordinate (b1) at (-1,-5.2) node at (-1,-5.6) {\small $b_1$};
\coordinate (b2) at (0.5,-5.2) node at (0.5,-5.6) {\small $b_2$};
\coordinate (b3) at (7.8,0.2) node at (8.2,0.1) {\small $b_3$};
\coordinate (b4) at (-6.8,2.5) node at (-7.3,2.65) {\small $b_5$};
\coordinate (b5) at (-2.4,4.8) node at (-2.4,5.4) {\small $b_4$};

\draw [fill=black] (a1) circle (2pt);
\draw [fill=black] (a2) circle (2pt);
\draw [fill=black] (a3) circle (2pt);
\draw [fill=black] (a4) circle (2pt);
\draw [fill=black] (a5) circle (2pt);

\draw [dashed] (a1) circle (1.2);
\draw [dashed] (a2) circle (1.2);
\draw [dashed] (a3) circle (1.2);
\draw [dashed] (a4) circle (1.2);
\draw [dashed] (a5) circle (1.2);

\draw [dashed] (a1) --node[right] {\small $\Sigma_1$} (b1);
\draw [dashed] (a2) --node[right] {\small $\Sigma_2$} (b2);
\draw [dashed] (a3) --node[above] {\small $\Sigma_3$} (b3);
\draw [dashed] (a4) --node[below] {\small $\Sigma_5$} (b4);
\draw [dashed] (a5) --node[right] {\small $\Sigma_4$} (b5);

\coordinate (L) at (7.5,-5) node at (7.5,-5) {\small (b)};
\end{tikzpicture}
\caption{(a) A crystal with 4 dislocations. (b) A crystal with 5 dislocations.}\label{fig-2}
\end{figure}

\begin{remark}\label{remuq}
By linearity,  the function $\bar{v}^\e_{\ba}$ introduced in \eqref{zucco} can be written as the superposition $\bar v^\e_{\ba}=\sum_{i=1}^n\bar v^\e_{a_i}$ of the solutions $\bar v_{a_i}^\e$ to 
\begin{equation*}
  \begin{cases}
  \Delta \bar v_{a_i}^\e=0&\text{ in  }\Om_\eps(\ba),\\
	\bar v_{a_i}^\e=g_{b_i}-\omega(a_i) \theta_{a_i}&\text{ on  }\partial\Omega\setminus\cup_{j=1}^n \overline{B}_{\e}(a_j), \\
{\partial \bar v_{a_i}^\e}/{\partial \nu}=- \k_{a_i}\cdot \nu&\text{ on  }\partial \Om_\e(\ba)\setminus \partial \Om.
 \end{cases}
\end{equation*}
Fix $i\in\{1,\ldots,n\}$. By linearity, the solution $\bar v^\e_{a_{i}}$ of the system above can be written as a superposition of one function satisfying a homogeneous Neumann boundary condition and another function satisfying a homogeneous Dirichlet boundary condition, namely  $\bar v^\e_{a_{i}}=   \hat{u}_{a_{i}}^\e + q_{a_{i}}^\e$, where $\hat{u}_{a_{i}}^\e$ and $q_{a_{i}}^\e$ solve, respectively, 
\begin{equation*}
\begin{cases}
\Delta \hat u_{a_{i}}^\e=0&\text{in $\Om_\e(\ba)$,}\\
	\hat u_{a_i}^\e=g_{b_i}- \omega(a_i)\theta_{a_i}&\text{on  }\partial\Omega\setminus\cup_{j=1}^n \overline{B}_{\e}(a_j),	\\
	{\partial \hat u_{a_{i}}}/{\partial \nu}=0 & \text{on  }\cup_{j=1}^n\partial B_\e(\a_j),
\end{cases}
\;\;
\begin{cases}
\Delta q_{a_{i}}^\e=0 & \text{in  }\Om_\e(\ba),\\
	q_{a_{i}}^\e=0 &\text{on  }\partial\Omega\setminus\cup_{j=1}^n \overline{B}_{\e}(a_j),	\\
	{\partial q_{a_{i}}}/{\partial \nu}=-\k_{a_i}\cdot \nu & \text{on  }\cup_{j=1}^n\partial B_\e(\a_j), \\
	 \oint_{\partial B_\e(a_j)}\partial q_{a_{i}}^\e/\partial \tau = 0 & \text{for every $j=1,\ldots,n$}.
\end{cases}
\end{equation*}
\end{remark}

\section{Auxiliary lemmas}\label{aux}
We start by proving some useful properties of the function $\k$ satisfying \eqref{euler-k}.
\begin{lemma}[Properties of $\k$]\label{scalare} 
Let $y_1$, $y_2\in \R^2$. Set $r:=|y_1-y_2|/2$ and $y$ the midpoint $(y_1+y_2)/2$. Then the vector fields $\k_{y_1}$ and $\k_{y_2}$ satisfy the following properties:
\begin{itemize}
\item[(i)] the scalar product $\k_{y_1}\cdot \k_{y_2}$ is negative in the disk $B_r(y)$, is zero on $\partial B_r(y)\setminus\{y_1,y_2\}$, and is positive in $\R^2\setminus \overline{B}_r(y)$.
In particular, 
$
\int_{B_r(y)} \k_{y_1}\cdot \k_{y_2}\, \de x \geq -2\pi;
$
\item[(ii)] the following estimate holds:
$\int_{B_r(y_i)} \k_{y_1}\cdot \k_{y_2}\, \de x \leq 2\pi,\quad\text{for $i=1,2$.}$
\end{itemize} 
Moreover,  let $\ell\in\mathbb N$, and let $y_1,\ldots, y_\ell\in \overline\Omega$ be distinct points. 
Then, 
\begin{itemize} 
\item[(iii)] if $y_i\in\Omega$, for every $0<\epsilon<d_i$ (see \eqref{dconi}),
\begin{equation}\label{solok}
2\pi|\log\epsilon|+2\pi \log d_i\leq\int_{\Omega_\epsilon(y_1,\dots,y_\ell)} |K_{y_i}|^2\, \de x\leq 2\pi|\log\epsilon|+2\pi\log(\mathrm{diam}\, \Omega);
\end{equation}
\item[(iv)] if $y_i\in\partial\Omega$, for every $0<\epsilon<\min_{j\neq i} \{\ea, |y_i-y_j|/2\}$, 
\begin{equation}\label{solok1}
\alpha|\log\e|+\alpha\log\ea\leq\int_{\Omega_\epsilon(y_1,\dots,y_\ell)} |K_{y_i}|^2\, \de x\leq \pi|\log\epsilon|+\pi\log(\mathrm{diam}\, \Omega).
\end{equation}
\end{itemize}
\end{lemma}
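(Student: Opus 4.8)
All four statements reduce to elementary computations once we use the identity $\k_x(z)=(z-x)^\perp/|z-x|^2=\nabla^\perp\log|z-x|$: since a rotation preserves the Euclidean inner product,
\[
\k_{y_1}\cdot\k_{y_2}=\nabla\log|z-y_1|\cdot\nabla\log|z-y_2|=\frac{(z-y_1)\cdot(z-y_2)}{|z-y_1|^2\,|z-y_2|^2},
\]
while $|\k_{y_i}|^2=\rho_{y_i}^{-2}$. For (i) I would complete the square: writing $z-y_1=(z-y)+d$ and $z-y_2=(z-y)-d$ with $d=(y_2-y_1)/2$, $|d|=r$, gives $(z-y_1)\cdot(z-y_2)=|z-y|^2-r^2$, so the sign of $\k_{y_1}\cdot\k_{y_2}$ is that of $|z-y|^2-r^2$, negative in $B_r(y)$, zero on $\partial B_r(y)$ off the two singular points $y_1,y_2$, and positive outside. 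For the integral I pass to polar coordinates $(s,\psi)$ centered at $y$, with $\psi$ the angle to $d$; since $y_1,y_2\in\partial B_r(y)$ the numerator $r^2-s^2$ vanishes to first order there and the integrand is absolutely integrable (and single-signed, so Tonelli applies). Using $|z-y_1|^2|z-y_2|^2=(s^2+r^2)^2-4s^2r^2\cos^2\psi$ together with the standard formula $\int_0^{2\pi}(A-B\cos^2\psi)^{-1}\,\de\psi=2\pi/\sqrt{A(A-B)}$, the angular integral collapses and the remaining radial integral $2\pi\int_0^r s/(s^2+r^2)\,\de s$ evaluates to the exact value $-\pi\log2\geq-2\pi$.

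For (ii) the ball is centered at $y_i$, and the third line of \eqref{euler-k} gives $\k_{y_i}\cdot\nu=0$ on every circle centered at $y_i$. Since $y_j\notin\overline B_r(y_i)$ for $j\neq i$, I would write $\k_{y_j}=\nabla\theta_{y_j}$ with the jump half-line of $\theta_{y_j}$ routed outside $B_r(y_i)$, integrate by parts (excising the center $y_i$), and use $\div\k_{y_i}=0$ to reduce the integral to $\int_{\partial B_r(y_i)}\theta_{y_j}\,(\k_{y_i}\cdot\nu)$; both the outer sphere and the excision sphere around $y_i$ contribute zero because $\k_{y_i}\cdot\nu\equiv0$ on circles about $y_i$. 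Hence the integral is in fact $0\leq2\pi$ (equivalently, the direct polar computation about $y_i$ makes the angular average vanish at every radius).

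Parts (iii) and (iv) only involve $|\k_{y_i}|^2=\rho_{y_i}^{-2}$ and follow by trapping $\Omega_\e(y_1,\dots,y_\ell)$ between explicit annuli or sectors, on which $\int\rho^{-2}$ is computed in polar coordinates. For (iii) the definition \eqref{dconi} of $d_i$ guarantees $A_\e^{d_i}(y_i)\subset\Omega_\e(y_1,\dots,y_\ell)\subset A_\e^{\mathrm{diam}\,\Om}(y_i)$ (the left inclusion because $B_{d_i}(y_i)\subset\Om$ is free of the other cores for $\e<d_i$, the right one because $\Om\subset B_{\mathrm{diam}\,\Om}(y_i)$), and $\int_{A_\e^R(y_i)}\rho_{y_i}^{-2}\,\de x=2\pi\log(R/\e)$ yields both bounds. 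For (iv) the lower bound replaces the annulus by the circular sector of opening $\alpha$ and radii $\e,\ea$ furnished by the interior cone condition \eqref{H3}, giving $\alpha\log(\ea/\e)$; the upper bound uses convexity, since the supporting line at $y_i\in\partial\Om$ confines $\Om$ to a half-plane, so the angular aperture of $\Om$ seen from $y_i$ is at most $\pi$, giving $\pi\log(\mathrm{diam}\,\Om/\e)$.

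The main obstacle is the integral in (i): one must check absolute integrability at the two points $y_1,y_2$ lying on $\partial B_r(y)$ and carry out the elementary but not completely transparent angular integration that produces the clean constant. A secondary technical point is (iv), where one extracts the genuine interior sector from \eqref{H3} and verifies, using the smallness of $\e$, that its truncation $\{\e<\rho_{y_i}<\ea\}$ remains inside $\Omega_\e(y_1,\dots,y_\ell)$, i.e.\ does not meet the other removed cores.
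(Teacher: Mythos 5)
Your proposal is correct, and for the two integral estimates in (i) and (ii) it takes a genuinely different and sharper route than the paper. For the sign statement in (i) the two arguments coincide: the paper invokes Thales' theorem on the angle $\widehat{y_1\,x\,y_2}$, which is precisely your identity $(z-y_1)\cdot(z-y_2)=|z-y|^2-r^2$ read geometrically. For the integral over $B_r(y)$, however, the paper computes nothing exactly: it bounds $|\k_{y_1}|\le 1/r$ on the half-disk of $B_r(y)$ nearer to $y_2$ and integrates $|\k_{y_2}|$ in polar coordinates, landing on the crude constant $-2\pi$; your exact evaluation $-\pi\log 2$ checks out (both the angular formula with $A=(s^2+r^2)^2$, $B=4s^2r^2$, $\sqrt{A(A-B)}=(s^2+r^2)(r^2-s^2)$, and the radial integral do), modulo an obvious sign slip in the sentence quoting ``$2\pi\int_0^r s/(s^2+r^2)\,\de s$'', which equals $+\pi\log 2$ and acquires its minus sign from $(s^2-r^2)/(r^2-s^2)=-1$. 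Likewise in (ii) the paper only uses $|\k_{y_j}|\le 1/r$ on $B_r(y_i)$ to get the bound $2\pi$, whereas your integration by parts --- exploiting $\div \k_{y_i}=0$ and $\k_{y_i}\cdot\nu=0$ on circles centered at $y_i$, with the branch cut of $\theta_{y_j}$ routed outside $\overline B_r(y_i)$ --- shows the integral is exactly $0$, a cleaner and potentially reusable fact. Parts (iii) and (iv) are handled exactly as in the paper, whose proof is a one-line instruction to trap $\Omega_\e(y_1,\dots,y_\ell)$ between annuli and sectors. One caveat you share with the paper: in the lower bound of (iv), a sector of opening $\alpha$ reaching out to radius $\ea$ need not avoid the other cores $\overline B_\e(y_j)$ when some $|y_i-y_j|<\ea$; the hypothesis $\e<|y_i-y_j|/2$ only guarantees this after truncating at $s=\min_{j\ne i}\{\ea,|y_i-y_j|/2\}$ (which is in fact what the paper's sketch integrates over), and that truncation yields $\alpha\log(s/\e)$ rather than $\alpha\log(\ea/\e)$. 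This imprecision sits in the statement of the lemma itself and is not introduced by your argument, so it does not count against you.
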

\begin{proof}
To prove (i) it is enough to notice that, given a point $x\in \R^2\setminus\{y_1,y_2\}$, the angle $\widehat{y_1\,x\,y_2}$ is larger than, equal to, or smaller than $\pi/2$, according to whether $x\in B_r(y)$, $x\in \partial B_r(y)$, $x\in \R^2\setminus \overline{B}_r(y)$, respectively. 
To prove the estimate, we consider the disk $B_r(y)$ whose diameter is the axis of the segment joining $y_1$ and $y_2$, and we define $B_r(y)^{+}$ the half of $B_r(y)$ on the side of $y_2$.
By symmetry, we have
\begin{equation*}
\begin{split}
\int_{B_r(y)} \k_{y_1}\cdot \k_{y_2}\, \de x =& -2\int_{B_r(y)^{+}} |\k_{y_1}\cdot \k_{y_2}|\, \de x\geq -\frac2r \int_{B_r(y)^{+}} |\k_{y_2}|\, \de x \\
\geq&-\frac2r\int_{\pi/2}^{3\pi/2}\int_0^r \de\rho_2\de\theta_2=-2\pi.
\end{split}
\end{equation*}

To prove (ii), observe that $|\k_{y_i}|\leq1/r$ in $\R^2\setminus\overline B_{r}(y_i)$, so that
$$\int_{B_r(y_i)} \k_{y_1}\cdot \k_{y_2}\, \de x \leq\frac{1}{r}\int_{B_{r}(0)} {|x|}^{-1}\,\de x=\pi.$$

To prove (iii) and (iv) one integrates in polar coordinates centered at $y_i$ over the sets $A_\e^{\mathrm{diam}\, \Om}(y_i)\cap\Om$, $A_\e^{d_i}(y_i)\cap\Om$, $A_\e^{s}(y_i)\cap \Om$ with $s:=\min_{j\neq i} \{\ea, |y_i-y_j|/2\}$, and uses the convexity assumption \eqref{H1} and \eqref{H3}.
\end{proof}

In the rest of this section, we prove some \emph{a priori} bounds on harmonic functions that will be useful in the sequel.
\begin{lemma}\label{max_princ_n}
Let $a_1,\dots,\a_n\in \overline \Om$, let $h\in W^{1,1}(\partial\Omega)$, 
and let $\hat u^\e$ be the solution to
\begin{equation}\label{max_equations}
 \begin{cases}
\Delta \hat u^\e=0&\text{in  }\Om_\e(\ba),\\
	\hat u^\e=h&\text{on  }\partial\Omega\setminus\cup_{j=1}^n \overline{B}_\e(\a_j),	\\
	{\partial \hat u^\e}/{\partial \nu}=0 & \text{on  }\partial\Om_\eps(\ba)\cap\big(\cup_{j=1}^n\partial B_\e(\a_j)\big).
\end{cases}
\end{equation}
Then $\hat u^\e$ is the minimizer of the Dirichlet energy in $H^1(\Om_\e(\ba))$ with prescribed boundary datum $h$ on $\partial\Omega\setminus\cup_{j=1}^n \overline{B}_\e(\a_j)$, and the following estimates hold
\begin{equation}\label{grad-n}
|\hat u^\e(x)| \leq C\|h\|_{L^\infty(\partial\Omega)}, \qquad\text{for all $x\in\overline\Om_\e(\ba)$}
\end{equation}
and 
\begin{equation}\label{grad-n2}
\int_{\Omega_\e(\ba)}|\nabla \hat u^\e|^2\,\de x\leq  C\|h\|^2_{H^{1/2}(\partial\Om)}
\end{equation}
for some constant $C>0$ independent of $\epsilon$. Moreover, in the case $a_1,\dots,\a_n\in \Om$ are distinct points and $\eps<\min_i d_i$, also the estimate on the gradient holds
\begin{equation}\label{grad-n3}
|\nabla \hat u^\e(x)| \leq C\|h\|_{L^\infty(\partial\Omega)}, \qquad\text{for all $x\in\cup_{j=1}^n\partial B_\e(\a_j)$}.
\end{equation}
\end{lemma}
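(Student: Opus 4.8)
The plan is to dispatch the four assertions in order, the first three being soft consequences of standard elliptic theory and the last one being the genuine difficulty.

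\smallskip

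First I would establish the variational characterization. On the affine space $\{u\in H^1(\Om_\e(\ba)): u=h \text{ on } \partial\Om\setminus\cup_{j}\overline B_\e(a_j)\}$ the Dirichlet functional is strictly convex and coercive: coercivity comes from a Poincar\'e inequality, which is available because the Dirichlet portion of $\partial\Om_\e(\ba)$ has positive $\mathcal H^1$-measure. Since $\partial\Om$ is one-dimensional, the embedding $W^{1,1}(\partial\Om)\hookrightarrow C(\partial\Om)\subset L^\infty(\partial\Om)$ holds, so $h$ is in particular bounded; when $h\in H^{1/2}$ the affine space is nonempty and the direct method yields a unique minimizer $\hat u^\e$. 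Writing the first variation against test functions supported away from $\partial\Om$ shows that $\hat u^\e$ is harmonic and that the boundary term on $\cup_j\partial B_\e(a_j)$ vanishes for all such tests, which is precisely the homogeneous Neumann condition in \eqref{max_equations}. Conversely any solution of \eqref{max_equations} is a critical point of a strictly convex functional, hence the minimizer; this proves the first claim.

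\smallskip

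For \eqref{grad-n} I would invoke the maximum principle for the mixed problem: $\hat u^\e$ is harmonic, so by the strong maximum principle any interior extremum forces $\hat u^\e$ to be constant, while on the (smooth) Neumann arcs $\partial B_\e(a_j)$ Hopf's lemma rules out a boundary extremum, since it would require a strictly signed outer normal derivative, contradicting $\partial\hat u^\e/\partial\nu=0$. Hence the extrema of $\hat u^\e$ are attained on the Dirichlet boundary, where $\hat u^\e=h$, giving $\|\hat u^\e\|_{L^\infty(\overline\Om_\e(\ba))}\le\|h\|_{L^\infty(\partial\Om)}$, i.e.\ \eqref{grad-n} with $C=1$. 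For \eqref{grad-n2} I would exploit the minimality just proved: letting $\tilde h$ denote the harmonic extension of $h$ to all of $\Om$, its restriction to $\Om_\e(\ba)$ is admissible (it equals $h$ on the Dirichlet part of the boundary), so
\begin{equation*}
\int_{\Om_\e(\ba)}|\nabla\hat u^\e|^2\,\de x\le\int_{\Om_\e(\ba)}|\nabla\tilde h|^2\,\de x\le\int_{\Om}|\nabla\tilde h|^2\,\de x\le C\|h\|_{H^{1/2}(\partial\Om)}^2,
\end{equation*}
the last inequality being the standard trace/extension estimate. Crucially, $\tilde h$ does not depend on $\e$, so $C$ is independent of $\e$.

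\smallskip

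The main obstacle is the pointwise gradient bound \eqref{grad-n3}; a naive interior estimate or a Kelvin-type reflection across $\partial B_\e(a_i)$ only gives a useless factor $\e^{-1}$, so the decay must be extracted more carefully. Since the $a_i$ are distinct and $\e<d_i$, the annulus $A_\e^{d_i}(a_i)\subset\Om_\e(\ba)$ is a genuine annulus on which $\hat u^\e$ is harmonic with the homogeneous Neumann condition on the inner circle only; note that on $\partial B_\e(a_i)$ the normal derivative vanishes, so it suffices to control the tangential derivative $\e^{-1}\partial_\theta\hat u^\e$. I would rescale by setting $U(y):=\hat u^\e(a_i+\e y)$ on $A_1^{R}(0)$ with $R:=d_i/\e$, so that $|U|\le C\|h\|_{L^\infty(\partial\Om)}=:M$ and $\partial_\rho U=0$ on $\{|y|=1\}$, and expand $U$ in its Laurent--Fourier series $U=c_0+c_1\log\rho+\sum_{k\ge1}[(a_k\rho^k+b_k\rho^{-k})\cos k\theta+(c_k\rho^k+d_k\rho^{-k})\sin k\theta]$. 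The Neumann condition at $\rho=1$ forces $c_1=0$, $a_k=b_k$, and $c_k=d_k$, while bounding the Fourier coefficients of $U(R,\cdot)$ by $M$ gives $|a_k|,|c_k|\le CM\,(R^k+R^{-k})^{-1}\le CM\,R^{-k}$. Summing the tangential derivative at $\rho=1$ then yields $|\partial_\theta U(1,\theta)|\le CM\sum_{k\ge1}k\,R^{-k}\le CM\,R^{-1}$, so that $|\nabla_x\hat u^\e|=\e^{-1}|\partial_\theta U(1,\cdot)|\le C M R^{-1}\e^{-1}=CM/d_i$ on $\partial B_\e(a_i)$; taking the minimum of the $d_i$ over $i$ gives \eqref{grad-n3} with a constant depending on the configuration but uniform in $\e$. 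The only delicate point is that the geometric sum is controlled uniformly only once $R$ is bounded away from $1$, i.e.\ for $\e\le\tfrac12 d_i$; this is precisely the regime $\e\to0$ in which the lemma is applied, so it suffices. I expect this Laurent-coefficient decay estimate to be the real content of the lemma, everything else reducing to the maximum principle and to the minimality established in the first step.
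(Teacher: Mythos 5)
Your proposal is correct and follows the same route as the paper for the first three assertions: minimality via uniqueness of the solution to \eqref{max_equations} (equivalently, strict convexity plus the Euler--Lagrange computation), the bound \eqref{grad-n} from the maximum principle together with Hopf's lemma on the Neumann arcs, and \eqref{grad-n2} by comparing with the harmonic extension of $h$ to all of $\Omega$ and using the trace estimate --- this is exactly the comparison function the paper uses, and your observation that it is independent of $\e$ is the point of the argument. The only place where you genuinely diverge is \eqref{grad-n3}, which the paper dismisses as ``standard''; your Laurent--Fourier expansion of the rescaled function $U$ on the annulus $A_1^{R}(0)$, with the Neumann condition forcing $a_k=b_k$ and the $L^\infty$ bound forcing $|a_k|\lesssim M R^{-k}$, is a correct and self-contained instantiation of that standard estimate, and it isolates the right mechanism (the decay of the inner Fourier modes, not a naive interior gradient bound, which would indeed only give $\e^{-1}$). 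Two minor points: to differentiate the series at $\rho=1$ you should note that the coefficient bounds make $\sum_k k(|a_k|+|b_k|)\rho^{k-1}$ uniformly summable up to the inner circle (or invoke even reflection across $\partial B_1$), so $U\in C^1$ there; and, as you yourself flag, your constant degenerates as $\e\uparrow d_i$ and is uniform only for, say, $\e\le d_i/2$ --- this is consistent with how the lemma is used (always in the regime $\e\to0$ with $\min_i d_i$ bounded away from zero), and indeed no better uniformity can be expected when the excised disks become tangent.
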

\begin{proof}
The minimality of $\hat u^\e$ follows from the uniqueness of the solution to \eqref{max_equations}; estimate \eqref{grad-n} is a consequence of the maximum principle and Hopf's Lemma, while \eqref{grad-n2} is obtained by testing $\hat u^\e$ with the minimizer of the Dirichlet energy in $H^1(\Om)$ with prescribed boundary datum $h$ on $\partial\Omega\setminus\cup_{j=1}^n \overline{B}_\e(\a_j)$ and using the continuity of the map which associates the minimizer with the boundary datum. 
Finally, the proof of \eqref{grad-n3} is standard (see, e.g., \cite{evans}).
\end{proof}

As a simple consequence of Lemma~\ref{max_princ_n}, we have the following estimate in the case $n=1$.
\begin{lemma}\label{lemmaduenove}
Let $\a\in \Om$ and let $\e\in(0,d(a)/2)$. 
Let $w^\e$ be the harmonic extension in $B_\e(a)$ of $\bar u^\e_\a$.
Then there exists a constant $C>0$ independent of $\eps$ such that 
\begin{equation}\label{ce}
\int_{B_\e(a)}|\nabla w^\eps(x)|^2\,\de x\leq C\e^2\|g-\theta_\a\|^2_{L^\infty(\partial\Om)}.
\end{equation}
\end{lemma}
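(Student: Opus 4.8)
The plan is to reduce the energy of the radial harmonic extension to a one–dimensional control of the boundary trace of $\bar u^\e_a$ on $\partial B_\e(a)$, and then to exploit the minimality of the harmonic extension by testing against an explicit competitor. First I would note that $\bar u^\e_a$ is the $n=1$ instance of the regular part $\bar v^\e_{\ba}$ solving \eqref{zucco}: since $a\in\Omega$ we have $\omega(a)=1$, and by the third identity in \eqref{euler-k} the field $\k_a$ is tangent to every circle centered at $a$, i.e.\ $\k_a\cdot\nu=0$ on $\partial B_\e(a)$. Hence the Neumann datum in \eqref{zucco} vanishes and $\bar u^\e_a$ coincides with the solution $\hat u^\e$ of \eqref{max_equations} with $h=g-\theta_a$. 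The requirement $\e<\min_i d_i$ of \eqref{grad-n3} reads $\e<d(a)$ for a single dislocation (cf.\ \eqref{dconi}), and is guaranteed by $\e<d(a)/2$. Therefore \eqref{grad-n3} gives the uniform bound $|\nabla\bar u^\e_a(x)|\le C\|g-\theta_a\|_{L^\infty(\partial\Omega)}$ for every $x\in\partial B_\e(a)$.

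Next I would transfer this into a control of the oscillation of the trace. Writing $\phi(\theta)$ for the restriction of $\bar u^\e_a$ to $\partial B_\e(a)$ in the angular variable $\theta$ of the polar coordinates centered at $a$, the tangential derivative is $\partial_\tau\phi=\e^{-1}\partial_\theta\phi$, so that $|\partial_\theta\phi(\theta)|\le\e\,|\nabla\bar u^\e_a|\le C\e\|g-\theta_a\|_{L^\infty(\partial\Omega)}$ and hence $\int_0^{2\pi}|\partial_\theta\phi|^2\,\de\theta\le C\e^2\|g-\theta_a\|_{L^\infty(\partial\Omega)}^2$. Denoting by $\bar\phi$ the average of $\phi$ and setting $\psi:=\phi-\bar\phi$, the Poincar\'e--Wirtinger inequality on the circle (with optimal constant $1$) yields $\int_0^{2\pi}\psi^2\,\de\theta\le\int_0^{2\pi}|\partial_\theta\phi|^2\,\de\theta$.

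Finally, since $w^\e$ is the harmonic extension of $\phi$ into $B_\e(a)$, it minimizes the Dirichlet energy among all $H^1(B_\e(a))$ functions with trace $\phi$ on $\partial B_\e(a)$. I would test with the competitor $v(\rho,\theta):=\bar\phi+(\rho/\e)\,\psi(\theta)$, whose trace at $\rho=\e$ equals $\phi$; a direct computation in polar coordinates gives $\int_{B_\e(a)}|\nabla v|^2\,\de x=\tfrac12\int_0^{2\pi}\big(\psi^2+|\partial_\theta\phi|^2\big)\,\de\theta$. Combining this with the bounds of the previous step gives
\[
\int_{B_\e(a)}|\nabla w^\e|^2\,\de x\le\int_{B_\e(a)}|\nabla v|^2\,\de x\le C\e^2\|g-\theta_a\|_{L^\infty(\partial\Omega)}^2,
\]
which is \eqref{ce}.

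The only genuinely delicate point is the $\e$–uniform pointwise gradient bound on $\partial B_\e(a)$ of the first step: it is exactly the content of \eqref{grad-n3}, and it is essential that the identity $\k_a\cdot\nu=0$ turns \eqref{zucco} into the purely Neumann problem \eqref{max_equations} to which that estimate applies. Everything else is elementary; the decisive factor $\e^2$ is produced by the scale invariance of the two–dimensional Dirichlet energy together with the fact that the oscillation of $\bar u^\e_a$ on $\partial B_\e(a)$ is of order $\e$.
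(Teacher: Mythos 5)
Your proof is correct, and it shares with the paper's proof the same decisive first step: both reduce everything to the $\e$-uniform $W^{1,\infty}$ bound for $\bar u^\e_a$ on $\partial B_\e(a)$ coming from Lemma~\ref{max_princ_n} (estimates \eqref{grad-n} and \eqref{grad-n3}), after observing that $\k_a\cdot\nu=0$ turns \eqref{dec_eqn} into the pure Neumann problem \eqref{max_equations} with $h=g-\theta_a$. Where you diverge is in how this boundary information is converted into the Dirichlet bound for the extension. The paper invokes the trace-theoretic inequality $\|\nabla\varphi\|^2_{L^2(B_1(0))}\leq C\|\varphi-m(\varphi)\|^2_{H^{1/2}(\partial B_1(0))}$ for harmonic functions, bounds the $H^{1/2}$ seminorm of the trace via the tangential derivative and the Gagliardo double integral, and then rescales from $B_1(0)$ to $B_\e(a)$ to extract the factor $\e^2$. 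You instead exploit only the Dirichlet minimality of the harmonic extension, testing against the explicit competitor $\bar\phi+(\rho/\e)\psi(\theta)$ and closing with the Wirtinger inequality $\int_0^{2\pi}\psi^2\,\de\theta\leq\int_0^{2\pi}|\partial_\theta\phi|^2\,\de\theta$; the computation $\int_{B_\e(a)}|\nabla v|^2\,\de x=\tfrac12\int_0^{2\pi}(\psi^2+|\partial_\theta\phi|^2)\,\de\theta$ is exact and the trace of $v$ at $\rho=\e$ is indeed $\phi$. Your route is more elementary and self-contained (no $H^{1/2}$ machinery, explicit constants), while the paper's argument is more robust in that it only requires control of the $H^{1/2}$ seminorm of the trace rather than a pointwise tangential gradient bound --- though in the end both proofs do rely on the pointwise bound \eqref{grad-n3}, whose $\e$-independence is the only genuinely delicate ingredient in either version. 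One cosmetic remark: for $n=1$ the definition \eqref{dconi} is a minimum over an empty index set, so strictly speaking one should read $d_1=d(a)$ there, as both you and the paper implicitly do.
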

\begin{proof}
By applying Lemma \ref{max_princ_n} with $n=1$, we obtain that $\bar u_{\a}^\e\in W^{1,\infty}(\partial B_\e(a))$ and 
\begin{equation}\label{larichiamiamo}
\|\bar u_a^\e\|_{W^{1,\infty}(\partial B_\e(\a))}\leq C\|g-\theta_{\a}\|_{L^\infty(\partial\Om)}.
\end{equation}
It is a known fact in the theory of harmonic functions (see \cite{evans}) that there exists a constant $C>0$ such that for every harmonic function $\varphi\in H^{1}(B_1(0))$
$$ \|\nabla\varphi\|^2_{L^2(B_1(0))}\leq C\|\varphi-m(\varphi)\|^2_{H^{1/2}(\partial B_1(0))},$$
where $m(\varphi)$ is the average of $\varphi$ on $\partial B_1(0)$. 
Using the Poincar\'e inequality we have 
\begin{equation*}
\begin{split}
\|\varphi- m(\varphi)\|^2_{H^{1/2}(\partial B_1(0))} 
\leq & C\bigg(\|\varphi-m(\varphi)\|^2_{L^2(\partial B_1(0))}+\int_{\partial B_1(0)}\int_{\partial B_1(0)}\frac{|\varphi(x)-\varphi(y)|^2}{|x-y|^2}\,\de x\de y\bigg) \\
\leq &C
\bigg(\int_{\partial B_1(0)}\bigg|\frac{\partial\varphi(x)}{\partial\tau}\bigg|^2\,\de x+\int_{\partial B_1(0)}\int_{\partial B_1(0)}\frac{|\varphi(x)-\varphi(y)|^2}{|x-y|^2}\,\de x\de y\bigg).
\end{split}
\end{equation*}
Therefore, using this and employing the change of variables $x=\e x'+\a$ and $y=\e y'+\a$ we have
\begin{equation*}
\begin{split}
 \|\nabla w^\e & \|^2_{L^2(B_\e(\a))}=  \int_{B_1(0)}|\nabla w^\e(\e x'+\a)|^2\,\de x'  \\
 \leq  &
 C\bigg(\int_{\partial B_1(0)}\bigg|\frac{\partial w^\e(\e x'+\a)}{\partial\tau}\bigg|^2\,\de x'+\int_{\partial B_1(0)}\int_{\partial B_1(0)}\frac{|w^\e(\e x'+\a)-w^\e(\e y'+\a)|^2}{|x'-y'|^2}\,\de x'\de y'\bigg) \\
 =&
 C\bigg(\eps\int_{\partial B_\e(\a)}\bigg|\frac{\partial w^\e(x)}{\partial\tau}\bigg|^2\,\de x +\int_{\partial B_\e(\a)}\int_{\partial B_\e(\a)}\frac{|w^\e(x)-w^\e(y)|^2}{|x-y|^2}\,\de x\de y\bigg).
\end{split}
\end{equation*}
Owing to the fact that $w^\e=\bar u^\e$ on $\partial B_\e(a)$ and by \eqref{larichiamiamo}, we obtain \eqref{ce}.
\end{proof}
The following result is similar to  \cite[Lemma~I.5]{BBH}; however, for the sake of completeness, we present here a proof.
\begin{lemma}\label{lemma7giugno}
Let $D_1,\ldots,D_\ell \subset \R^2$ be open, bounded, simply connected sets with Lipschitz boundary, such that $\overline{D}_1,\ldots,\overline{D}_\ell$ are pairwise disjoint and intersect $\overline \Om$. 
Set $\Om'\coloneqq\Om\setminus \cup_{i=1}^\ell \overline D_i$ and for every $i=1,\ldots,\ell$, let $h_i\in C^1(\overline\Om{}')$.
Consider the minimum problem
 \begin{align}\label{minmin}
\min_{c}\min_{u}\frac{1}{2}\int_{\Om'}|\nabla u|^2\de x,
 \end{align}
with $c=(c_1,\dots,c_\ell)\in\R^\ell$ and $u$ varying in the subset of functions in $H^1(\Om')$ satisfying
\begin{equation}\label{adm-fun}
u = h_i + c_i \ \hbox{on }\partial D_i \cap \Om, \  \hbox{for }i=1,\ldots,\ell.  
\end{equation}
Then $p\in H^1(\Om')$ is a solution to \eqref{minmin} if and only if it satisfies
 \begin{equation}\label{7giugno}
 \begin{cases}
\Delta  p=0&\text{ in  }\Om', \\
{\partial p}/{\partial \nu}=0&\text{ on  }\partial\Omega' \cap \partial \Om ,\\
{\partial p}/{\partial \tau}={\partial h_i}/{\partial \tau}&\text{ on  }\partial D_i\cap\Om,\ \hbox{for }i=1,\ldots,\ell,\\
{\int_{\partial D_i \cap \Om}{\partial p}/{\partial \nu}\,\de\mathcal H^1=0}&\text{ for }i=1,\dots,\ell.
\end{cases}  
\end{equation}
Moreover, a solution $p$ satisfies 
\begin{equation}\label{stima-p}
\big|\max_{\overline{\Om}'} p - \min_{\overline{\Om}'} p\big| \leq 
\sum_{i=1}^\ell \mathcal H^1(\partial D_i\cap\Om)\max_{\partial D_i \cap \Om}\bigg|\frac{\partial h_i(x)}{\partial \tau}\bigg|.
\end{equation}
\end{lemma}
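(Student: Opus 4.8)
The plan is to split the statement into two independent parts: the variational characterization \eqref{7giugno} of the minimizers, which I would treat by a routine first-variation argument, and the quantitative bound \eqref{stima-p}, whose proof rests on a flux balance combined with the connectedness of $\Om'$.

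For the characterization, I would first secure existence by the direct method. The energy in \eqref{minmin} is invariant under adding a common constant to $u$ and to all the $c_i$, so I normalize the residual gauge by fixing $c_1=0$; then \eqref{adm-fun} prescribes genuine Dirichlet data $u=h_1$ on the arc $\partial D_1\cap\Om$, Poincar\'e's inequality yields coercivity (and boundedness of the remaining $c_i$), and strict convexity of the Dirichlet integral gives uniqueness up to an additive constant. The admissible variations are exactly the $\varphi\in H^1(\Om')$ that are constant on each $\partial D_i\cap\Om$ (since $h_i$ is fixed and only $c_i$ may move) and free on $\partial\Om'\cap\partial\Om$. Writing the first variation $\int_{\Om'}\nabla p\cdot\nabla\varphi\,\de x=0$ and integrating by parts, the choice $\varphi\in C_c^\infty(\Om')$ gives $\Delta p=0$; letting $\varphi$ be free near $\partial\Om'\cap\partial\Om$ gives $\partial p/\partial\nu=0$ there; and letting the single constant $c_i$ vary gives $\int_{\partial D_i\cap\Om}\partial p/\partial\nu\,\de\mathcal H^1=0$. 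The tangential identity $\partial p/\partial\tau=\partial h_i/\partial\tau$ is merely the differentiated form of the constraint \eqref{adm-fun}. For the converse, if $p$ solves \eqref{7giugno} then the tangential condition forces $p=h_i+c_i$ on each (connected) arc $\partial D_i\cap\Om$, so $p$ is admissible; testing \eqref{7giugno} against any admissible $\varphi$ and integrating by parts, the bulk term vanishes by harmonicity, the outer term by the Neumann condition, and each inner term equals $c_i\int_{\partial D_i\cap\Om}\partial p/\partial\nu\,\de\mathcal H^1=0$, so $p$ satisfies the Euler--Lagrange equation and, by convexity, minimizes \eqref{minmin}.

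For the estimate, set $M_i:=\max_{\partial D_i\cap\Om}p$ and $m_i:=\min_{\partial D_i\cap\Om}p$. Since $p$ is harmonic with homogeneous Neumann data on $\partial\Om'\cap\partial\Om$, the maximum principle and Hopf's lemma place both extrema of $p$ on $\cup_i(\partial D_i\cap\Om)$, so $\max_{\overline{\Om}'}p=\max_i M_i$ and $\min_{\overline{\Om}'}p=\min_i m_i$; moreover $M_i-m_i=\mathrm{osc}_{\partial D_i\cap\Om}h_i$ because $p=h_i+c_i$ there. The crux is to show that the range $[\min_{\overline{\Om}'}p,\max_{\overline{\Om}'}p]$ is contained in $\bigcup_i[m_i,M_i]$. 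I would argue by contradiction: if some regular value $v$ lies in the range but in no $[m_i,M_i]$, then each $\partial D_i\cap\Om$ sits entirely in $\{p>v\}$ or entirely in $\{p<v\}$, while connectedness of $\Om'$ forces the level set $\{p=v\}\cap\Om'$ to be nonempty. Applying the divergence theorem to $\Om'_+:=\{p>v\}\cap\Om'$, the total flux of $\nabla p$ vanishes by harmonicity; the contributions of $\partial\Om'\cap\partial\Om$ vanish by the Neumann condition and those of the $\partial D_i\cap\Om$ contained in $\{p>v\}$ vanish by the no-flux condition in \eqref{7giugno}, leaving $\int_{\{p=v\}\cap\Om'}\partial p/\partial n\,\de\mathcal H^1=0$; but on this interior level set the outer normal of $\Om'_+$ points toward decreasing $p$, so the integrand is strictly negative, a contradiction. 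Hence $[\min p,\max p]\subseteq\bigcup_i[m_i,M_i]$, and by subadditivity of the length of a union of intervals,
\begin{equation*}
\max_{\overline{\Om}'}p-\min_{\overline{\Om}'}p\le\sum_{i=1}^\ell(M_i-m_i)=\sum_{i=1}^\ell\mathrm{osc}_{\partial D_i\cap\Om}h_i\le\sum_{i=1}^\ell\mathcal H^1(\partial D_i\cap\Om)\max_{\partial D_i\cap\Om}\Big|\frac{\partial h_i}{\partial\tau}\Big|,
\end{equation*}
where the last step is the fundamental theorem of calculus along the arc $\partial D_i\cap\Om$.

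I expect the main obstacle to be precisely the containment $[\min p,\max p]\subseteq\bigcup_i[m_i,M_i]$: without it the naive inequality $\max_iM_i-\min_im_i\le\sum_i(M_i-m_i)$ is simply false, since the intervals $[m_i,M_i]$ could be mutually disjoint, so one genuinely needs the flux-balance plus connectedness argument above. The remaining care is bookkeeping: choosing $v$ to be a regular value (so that $\{p=v\}$ is a smooth curve and $\Om'_+$ has Lipschitz boundary, legitimizing the divergence theorem), and ensuring that each $\partial D_i\cap\Om$ is a single arc, as is the case for the disks arising in the application.
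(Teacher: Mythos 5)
Your proposal is correct, and for the Euler--Lagrange characterization it coincides with the paper's (the paper merely asserts that \eqref{7giugno} is ``easy to obtain'' from minimality, and handles the converse via uniqueness for \eqref{7giugno} plus the maximum principle and Hopf's lemma, whereas you verify admissibility and the variational equation directly and invoke convexity --- an equivalent, if anything cleaner, route). The genuine divergence is in the proof of \eqref{stima-p}. Both arguments reduce the estimate to the same structural fact about the intervals $[m_i,M_i]=[\min_{\partial D_i\cap\Om}p,\max_{\partial D_i\cap\Om}p]$, namely that they overlap enough to cover $[\min_{\overline{\Om}'}p,\max_{\overline{\Om}'}p]$, and both finish with the mean value theorem along each arc. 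But the mechanisms differ: the paper orders the $D_i$ by their local minima, extracts a chain $j_1<\dots<j_{i^*}$ of indices realizing successive maxima, and proves the overlap $\min_{\partial D_{j_i}\cap\Om}p\le\max_{\partial D_{j_{i+1}}\cap\Om}p$ by exhibiting a truncation competitor $\psi$ that collapses the value gap $(\overline p,\underline p)$ and lowers the Dirichlet energy, contradicting minimality; you instead prove the covering $[\min p,\max p]\subseteq\bigcup_i[m_i,M_i]$ by a flux balance on $\{p>v\}\cap\Om'$ for a regular value $v$ in the putative gap. The paper's truncation is purely variational, works at the $H^1$ level, and needs no regularity of level sets (though, to get a genuine contradiction, one should note that $\int_E|\nabla p|^2>0$, which uses that $p$ is non-constant on the relevant component). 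Your flux argument uses only the system \eqref{7giugno} rather than minimality --- which matches the literal statement ``a solution $p$ satisfies \eqref{stima-p}'' --- at the price of Sard's theorem, the divergence theorem on the level-set domain, and control of $\{p=v\}$ where it meets $\partial\Om$ (harmless after reflection across the Neumann boundary, since $\partial\Om$ is $C^1$ and level lines meet it transversally). Both proofs implicitly rely on the connectedness of $\Om'$ (the paper for the nonemptiness of $E$, you for the nonemptiness of the level set) and on $\partial D_i\cap\Om$ being a single arc in the last step; you flag both points explicitly, and both hold for the small disks in a convex $\Om$ to which the lemma is applied.
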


\begin{proof}
First we remark that problem \eqref{minmin} is well posed, namely the class of admissible functions is not empty: indeed, the function
$u:=\sum_{i=1}^\ell\zeta_i h_i $ satisfies \eqref{adm-fun}, being $\zeta_i$ suitable smooth functions in $\Om'$ which are identically 1 in a neighborhood of $D_i$. 

If $p$ satisfies the minimum problem \eqref{minmin}, then it is easy to obtain system \eqref{7giugno} as the Euler-Lagrange conditions arising from minimality.

Conversely, assume that $p$ satisfies \eqref{7giugno}. In order to show that $p$ is optimal for \eqref{minmin}, it is enough to show that the \eqref{7giugno} has a unique 
 solution (up to a constant). Take two solutions of \eqref{7giugno}; then their difference $s$ is characterized by
\begin{align}\label{7giugno2}
 \begin{cases}
\Delta  s=0&\text{ in  }\Omega', \\
{\partial s}/{\partial \nu}=0&\text{ on  }\partial\Omega' \cap \partial \Om,\\
{\partial s}/{\partial \tau}=0&\text{ on  }\partial \Om' \setminus \partial \Om,\\
{\int_{\partial D_i\cap \Om}{\partial s}/{\partial \nu}\;\de\mathcal H^1=0}&\text{ for }i=1,\dots,\ell.
\end{cases}  
 \end{align}
The third condition implies that $s$ is constant on the boundary of the holes (intersected with $\Om$), namely $s=s_i$ on  $\partial D_i\cap \Omega$  for some $s_i\in \R$, for $i=1,\ldots,\ell$. 
In view of the maximum principle, we infer that either $s$ attains its maximum on the boundary $\partial \Om'\setminus \partial\Om$ or it is constant. 
The former case is excluded by Hopf's Lemma combined with the second and fourth conditions in \eqref{7giugno2}; therefore the latter case holds and the proof of the first statement is concluded. 

We relabel the sets $D_i$ so that the local minima of $p$ on their boundary are ordered as follows:
$$
\min_{\partial D_1 \cap \Om} p \geq \min_{\partial D_2 \cap \Om} p \geq \ldots \geq \min_{\partial D_\ell \cap \Om} p = \min_{\overline{\Om}'} p.
$$
Notice that, for the last equality, we have used the maximum principle, combined with the Hopf's Lemma, which prevents the minimum of $p$ to be on $\partial \Om$ (the same holds true for the maximum). We now define an ordered subfamily of indices as follows: we choose
$$
j_1\in \Big\{ j\ :\ \max_{\partial D_j \cap \Om} p  = \max_{\overline{\Om}'} p \Big\},
$$
and, by recursion, given $j_i$ we choose
$$
j_{i+1}\in  \Big\{ j\ :\ \max_{\partial D_j \cap \Om} p = \max_{\cup_{k>{j_i}} \partial D_k \cap \Om} p \Big\}.
$$
Since $j_{i+1}>j_i$, in a finite number of steps, say $i^*$,  we find $j_{i^*}=\ell$ and the procedure stops. We clearly have
$$
\max_{\overline{\Om'}} p = \max_{\partial D_{j_1} \cap \Om} p \geq \max_{\partial D_{j_2} \cap \Om} p \geq \ldots \geq \max_{\partial D_{j_{i^*}}\cap \Om} p =  \max_{\partial D_\ell \cap \Om} p.
$$
Moreover, we claim that, for every $i=1,\ldots,i^*-1$
\begin{equation}\label{maxmin}
\min_{\partial D_{j_i} \cap \Om} p \leq \max_{\partial D_{j_{i+1}}\cap \Om}p. 
\end{equation}
Once proved the claim we are done, indeed
\[
\begin{split}
&\big|\max_{\overline{\Om}'}  - \min_{\overline{\Om}'} p\big| 
= \max_{ \partial D_{j_1}\cap \Om} p - \min_{\partial D_{j_{i^*}}\cap \Om} p 
\\ & =  \sum_{i=1}^{i^*} ( \max_{ \partial D_{j_i}\cap \Om} p - \min_{\partial D_{j_i}\cap \Om} p ) + \sum_{i=1}^{i^*-1} (\min_{\partial D_{j_i}\cap \Om} p - \max_{\partial D_{j_{i+1}}\cap \Om} p)
\\
& \leq  \sum_{i=1}^\ell( \max_{ \partial D_i\cap \Om} p - \min_{\partial D_i\cap \Om} p )\leq \sum_{i=1}^\ell \mathcal H^1(\partial D_i\cap\Om)\max_{\partial D_i\cap \Om}\bigg|\frac{\partial h_i(x)}{\partial \tau}\bigg|,
\end{split}
\]
where, in the last line, we have used the Mean Value Theorem. 
This proves \eqref{stima-p}.

In order to prove the claim \eqref{maxmin}, we argue by contradiction: we assume that there exists $i\in \{1,\ldots,i^*-1\}$ such that
$$
\underline{p}:=\min_{\partial D_{j_i}\cap \Om} p > \overline{p}:=\max_{\partial D_{j_{i+1}}\cap \Om} p.
$$
Notice that, by construction, we have $$\min_{\cup_{k=1}^{j_i}\partial D_{k}\cap \Om} p > \max_{\cup_{k= {j_{i+1}}}^\ell \partial D_{k}\cap \Om} p.$$ 
Consider the nonempty set $E:=\{x\in \Om' :\quad \underline{p}>p(x)>\overline{p}\}$
and the function
\[
\psi(x):=
\begin{cases}
p(x) &\text{if  $p(x)\leq \overline{p}$},\\
\overline{p}  &\text{if $x\in E$},\\
p(x)+\overline{p} -\underline{p} &\text{if $p(x)\geq \underline{p}$}.
\end{cases}
\]
The function $\psi$ is an element of $H^1(\Om')$, it is admissible for the minimum problem \eqref{minmin}, and it decreases the Dirichlet energy, contradicting the optimality of $p$. 
\end{proof}

As a direct consequence of Lemma \ref{lemma7giugno} we obtain the following corollary.

\begin{corollary}
Let $a_1,\dots,\a_\ell\in \overline \Om$ be distinct points and let $(\a_1^\e,\dots,\a_\ell^\e)$ be a sequence of points in $\Om^\ell$ converging to $(\a_1,\dots,\a_\ell)$ as $\e\to0$, such that the disks $B_\e(a_j^\e)$ are pairwise disjoint. Fix $i\in\{1,\dots,\ell\}$ and let $p_i^\e$ be the solution (unique up to a constant) to
\begin{equation}\label{5ottobre}
\begin{cases}
\Delta p_i^\e=0 & \text{in  }\Om_\e(a_1^\e,\ldots,a_\ell^\e),\\
	\partial p_i^\e/\partial \nu=0 & \text{on  }\partial\Omega\setminus\cup_{j=1}^\ell \overline B_\e(a_j^\e),	\\
	{\partial p_i^\e}/{\partial \tau}=-\partial\log(|x-a_i^\e|)/\partial\tau & \text{on  }\partial B_\e(\a_j^\e)\cap\Omega, \text{ for every $j\neq i$,}\\
	{\partial p_i^\e}/{\partial \tau}=0 & \text{on  } \partial B_\e(\a_i^\e)\cap \Omega,\\
	\int_{\partial B_\e(a_j^\e)\cap\Om}\partial p_i^\e/\partial \nu = 0 & \text{for } j=1,\ldots,\ell.
\end{cases}
\end{equation}
Then, there exists a positive constant $C$ independent of $\e$ such that 
\begin{equation}\label{latesi1}
||p_i^\e||_{L^\infty(\Om_\e(a_1^\e,\ldots,a_\ell^\e))}\leq C\e.
\end{equation}
Moreover,
\begin{equation}\label{latesi}
\int_{\Om_\e(a_1^\e,\ldots,a_\ell^\e)}|\nabla p_i^\e|^2\, \de x 
\leq \frac{C}{{|\log\e|}}.
\end{equation}
\end{corollary}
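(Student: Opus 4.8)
The plan is to read \eqref{5ottobre} as a particular instance of the minimization problem \eqref{minmin} studied in Lemma~\ref{lemma7giugno}. Setting $\Om':=\Om_\e(a_1^\e,\ldots,a_\ell^\e)$, $D_j:=B_\e(a_j^\e)$, and
\[
h_j(x):=-\log|x-a_i^\e|\ \ (j\neq i),\qquad h_i:=0,
\]
one checks that the boundary system in \eqref{7giugno} coincides with \eqref{5ottobre}; note that each $h_j$ belongs to $C^1(\overline\Om{}')$ because the singularity at $a_i^\e$ is excised by the hole $B_\e(a_i^\e)$, so that $|x-a_i^\e|\geq\e$ on $\Om'$. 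Consequently $p_i^\e$ is the (unique up to an additive constant) solution of \eqref{7giugno} and, by the first part of Lemma~\ref{lemma7giugno}, it minimizes the Dirichlet energy among the admissible competitors of \eqref{minmin}. These two facts, the oscillation bound \eqref{stima-p} and the minimality, will yield \eqref{latesi1} and \eqref{latesi}, respectively.

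For \eqref{latesi1} I would invoke \eqref{stima-p} directly. On $\partial B_\e(a_j^\e)$ with $j\neq i$ one has $|\partial h_j/\partial\tau|\leq|x-a_i^\e|^{-1}\leq C$, since the points $a_1,\ldots,a_\ell$ are distinct and $a_j^\e\to a_j$, so that $|a_j^\e-a_i^\e|$ is bounded below for $\e$ small; moreover $\mathcal H^1(\partial B_\e(a_j^\e)\cap\Om)\leq 2\pi\e$ and $\partial h_i/\partial\tau=0$. Summing over $j\neq i$ in \eqref{stima-p} gives $|\max_{\overline\Om'}p_i^\e-\min_{\overline\Om'}p_i^\e|\leq C\e$. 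Normalizing the free additive constant so that $\max_{\overline\Om'}p_i^\e=-\min_{\overline\Om'}p_i^\e$ then yields \eqref{latesi1}.

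For \eqref{latesi} I would exploit the minimality and test the energy against an explicit competitor built from logarithmic cut-offs. For each $j\neq i$ choose a radius $r_j>0$ smaller than half the distance of $a_j$ to $\partial\Om$ and to the other points $a_k$ ($k\neq j$), so that, for $\e$ small, the balls $B_{r_j}(a_j^\e)$ are pairwise disjoint, contained in $\Om$, and avoid both $\partial\Om$ and the hole $B_\e(a_i^\e)$; on each of them $\log|x-a_i^\e|$ and its gradient are bounded. Define $\eta_j$ to be $1$ on $\overline B_\e(a_j^\e)$, $0$ outside $B_{r_j}(a_j^\e)$, and equal to $\log(r_j/|x-a_j^\e|)/\log(r_j/\e)$ on the annulus $A_\e^{r_j}(a_j^\e)$, and set
\[
u:=\sum_{j\neq i}\eta_j(x)\big(-\log|x-a_i^\e|\big).
\]
Then $u=h_j$ on $\partial B_\e(a_j^\e)$ for $j\neq i$, while $u=0$ on $\partial B_\e(a_i^\e)$ and near $\partial\Om$, so $u$ is admissible for \eqref{minmin} with all constants $c_j=0$. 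Since the supports are disjoint, the energy splits as $\sum_{j\neq i}\int|\nabla(\eta_j\log|\cdot-a_i^\e|)|^2$, and the capacity computation $\int|\nabla\eta_j|^2=2\pi/\log(r_j/\e)$, together with $\int\eta_j^2\leq C/|\log\e|^2$ and the boundedness of $\log|\cdot-a_i^\e|$ and of its gradient on $B_{r_j}(a_j^\e)$, gives $\int_{\Om'}|\nabla u|^2\leq C/|\log\e|$. Minimality then yields \eqref{latesi}.

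The main obstacle is the energy bound \eqref{latesi}: the competitor must match the prescribed tangential derivative $\partial h_j/\partial\tau$ on each hole \emph{exactly}, which forces it to agree with $-\log|\cdot-a_i^\e|$ up to a constant there, while still having Dirichlet energy of the right order $1/|\log\e|$. The logarithmic profile of $\eta_j$ is precisely what realizes the capacitary rate, the dominant contribution coming from the term $\int(\log|\cdot-a_i^\e|)^2|\nabla\eta_j|^2\sim 1/|\log\e|$; keeping $\nabla(-\log|\cdot-a_i^\e|)$ bounded on the supports (so that the remaining terms are of lower order) is what requires the radii $r_j$ to stay away from $a_i^\e$, which is guaranteed by the points being distinct.
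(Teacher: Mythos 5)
Your proposal is correct and follows essentially the same route as the paper: both parts identify \eqref{5ottobre} with system \eqref{7giugno} of Lemma~\ref{lemma7giugno}, derive \eqref{latesi1} from the oscillation bound \eqref{stima-p} (using that $\partial h_j/\partial\tau$ is bounded on $\partial B_\e(a_j^\e)$ for $j\neq i$ and that each circle has length $2\pi\e$), and derive \eqref{latesi} by minimality against a capacitary competitor. The only cosmetic difference is in the cut-off: the paper takes $\zeta_\e$ supported in $B_{\sqrt\e}(a_j^\e)$ and invokes the capacity of $B_\e$ in $B_{\sqrt\e}$ (equal to $4\pi/|\log\e|$), whereas you use a fixed outer radius $r_j$ with the explicit logarithmic profile, giving $2\pi/\log(r_j/\e)$; both yield the same $O(1/|\log\e|)$ rate.
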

\begin{proof}
By applying Lemma \ref{lemma7giugno} with $D_j=B_\e(a_j^\e)$, and $h_j=-\log(|x-a_i^\e|)$ for every $j\in\{1,\ldots,\ell\}\setminus\{i\}$, $h_i=0$, estimate \eqref{stima-p} provides a positive constant $C$ such that
\begin{equation*}
\Big|\max_{\Om_\e(a_1^\e,\ldots,a_\ell^\e)} p_i^\e - \min_{\Om_\e(a_1^\e,\ldots,a_\ell^\e)} p_i^\e \Big| \leq 2\pi\e C.
\end{equation*}
which implies \eqref{latesi1}. 

Let $\zeta_\e$ be a smooth cut-off function which is identically 1 inside $B_{\e}(0)$ and  0 outside $B_{\sqrt\e}(0)$, and whose support vanishes as $\e\to0$. 
From the minimality of $p_i^\e$, comparing with $c_1=\cdots=c_\ell=0$ and 
$$u(x)=-\sum_{j\neq i} \zeta_\e (x-a_j^\e) \log(|x-a_i^\e|)$$
in \eqref{adm-fun}, we obtain
\begin{equation}\label{capacity}
\int_{\Om_\e(a_1^\e,\ldots,a_\ell^\e)} |\nabla p_i^\e|^2\, \de x  \leq C\bigg( \int_{B_{\sqrt\e}(0)} |\nabla \zeta_\e|^2\, \de x+|\spt\zeta_\e|\bigg),
\end{equation}
for some positive constant $C$ independent of $\e$.
By infimizing with respect to $\zeta_\e \in \{\zeta \in C^\infty_c(B_{\sqrt\e}(0))\,,\ \zeta \equiv 1 \ \hbox{in }B_{\e}(0)\}$, the integral in the right-hand side of \eqref{capacity} is the capacity of a disk of radius $\e$ inside a disk of radius $\sqrt\e$, which is equal to $4\pi/|\log\e|$, therefore we obtain
$$\int_{\Om_\e(a_1^\e,\ldots,a_\ell^\e)} |\nabla p_i^\e|^2\, \de x  \leq C\pi\bigg(\frac{4}{|\log\e|}+\e\bigg),$$
which implies \eqref{latesi}.
The lemma is proved.
\end{proof}

\begin{lemma}\label{conservative-n}
Let $a_1,\dots,a_n$ be distinct points in $\Om$ and let $(\a_1^\e,\dots,\a_n^\e)$ be a sequence of points in $\Om^n$ converging to $(\a_1,\dots,\a_n)$ as $\e\to0$.
Then, for every $i=1,\ldots,n$, as $\e\to 0$ we have
\begin{equation}\label{thesis-515}
\|\nabla\bar v^\e_{a_i^\e} - \nabla v_{a_i} \|_{L^2(\Omega_\e(\ba);\R^2)} \to 0,
\end{equation}
where $\bar v_{a_i^\e}^\e$ is defined in Remark~\ref{remuq} and $v_{a_i}$ satisfies
\begin{equation}\label{def-barv-a}
\begin{cases}
 \Delta  v_{\a_i}=0&\text{ in  }\Omega,\\
 v_{\a_i}=g_{b_i}-\theta_{\a_i}&\text{ on  }\partial\Omega.
\end{cases}
\end{equation}
\end{lemma}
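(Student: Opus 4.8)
The plan is to isolate the singular part of $\bar v^\e_{a_i^\e}$ and to treat the perforation as a vanishing perturbation of a fixed comparison function defined on all of $\Om$. Let me introduce the full-domain harmonic function $v_{a_i^\e}$, solving $\Delta v_{a_i^\e}=0$ in $\Om$ with $v_{a_i^\e}=g_{b_i}-\theta_{a_i^\e}$ on $\partial\Om$; this datum is single valued because the $2\pi$-jumps of $g_{b_i}$ and of $\theta_{a_i^\e}$ at $b_i$ cancel. Keeping the jump point $b_i$ fixed along the sequence, as $a_i^\e\to a_i$ the datum $g_{b_i}-\theta_{a_i^\e}$ converges to $g_{b_i}-\theta_{a_i}$, so the continuous dependence of the harmonic extension on its boundary datum gives $\nabla v_{a_i^\e}\to\nabla v_{a_i}$ in $L^2(\Om)$, hence in $L^2(\Om_\e(\ba^\e))$ since $\Om_\e(\ba^\e)\subset\Om$. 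Writing $\bar v^\e_{a_i^\e}=\hat u^\e_{a_i^\e}+q^\e_{a_i^\e}$ as in Remark~\ref{remuq} and using the triangle inequality, it then suffices to show that $\nabla q^\e_{a_i^\e}\to0$ and $\nabla\hat u^\e_{a_i^\e}-\nabla v_{a_i^\e}\to0$ in $L^2(\Om_\e(\ba^\e))$.

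For the first term I use the corollary preceding this lemma. Recalling $\k_{a_i^\e}=(\nabla\log|\cdot-a_i^\e|)^\perp$, a rotation of the gradient by $\pi/2$ transforms the mixed problem defining $q^\e_{a_i^\e}$ (homogeneous Dirichlet on $\partial\Om$, Neumann datum $-\k_{a_i^\e}\cdot\nu$ on the holes, vanishing periods) into problem~\eqref{5ottobre} for $p_i^\e$ (homogeneous Neumann on $\partial\Om$, tangential datum $-\partial\log|\cdot-a_i^\e|/\partial\tau$ on the holes), the period conditions matching under conjugation. Since such a rotation preserves the modulus of the gradient, $\int_{\Om_\e(\ba^\e)}|\nabla q^\e_{a_i^\e}|^2=\int_{\Om_\e(\ba^\e)}|\nabla p_i^\e|^2$, which tends to $0$ by \eqref{latesi}.

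For the second term, Lemma~\ref{max_princ_n} says that $\hat u^\e_{a_i^\e}$ minimizes the Dirichlet energy on $\Om_\e(\ba^\e)$ among competitors equal to $h:=g_{b_i}-\theta_{a_i^\e}$ on $\partial\Om$, the homogeneous Neumann condition on the holes being natural. Since $v_{a_i^\e}$ is such a competitor and $v_{a_i^\e}-\hat u^\e_{a_i^\e}$ vanishes on $\partial\Om$, the Euler--Lagrange orthogonality gives the Pythagorean identity
\[
\int_{\Om_\e(\ba^\e)}|\nabla(\hat u^\e_{a_i^\e}-v_{a_i^\e})|^2=\int_{\Om_\e(\ba^\e)}|\nabla v_{a_i^\e}|^2-\int_{\Om_\e(\ba^\e)}|\nabla\hat u^\e_{a_i^\e}|^2.
\]
To bound the right-hand side I let $U^\e$ equal $\hat u^\e_{a_i^\e}$ on $\Om_\e(\ba^\e)$ and be the harmonic extension of its trace inside each $B_\e(a_j^\e)$. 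Then $U^\e=h$ on $\partial\Om$, so $U^\e$ competes against $v_{a_i^\e}$ on $\Om$ and $\int_\Om|\nabla v_{a_i^\e}|^2\le\int_{\Om_\e(\ba^\e)}|\nabla\hat u^\e_{a_i^\e}|^2+\sum_{j=1}^n\int_{B_\e(a_j^\e)}|\nabla U^\e|^2$. Combined with $\int_{\Om_\e(\ba^\e)}|\nabla v_{a_i^\e}|^2\le\int_\Om|\nabla v_{a_i^\e}|^2$, this yields $\int_{\Om_\e(\ba^\e)}|\nabla(\hat u^\e_{a_i^\e}-v_{a_i^\e})|^2\le\sum_{j=1}^n\int_{B_\e(a_j^\e)}|\nabla U^\e|^2$.

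The last sum is estimated exactly as in Lemma~\ref{lemmaduenove}. For $\e$ small the points $a_j^\e$ are distinct and the disks pairwise disjoint, so \eqref{grad-n} and \eqref{grad-n3} give $\|\hat u^\e_{a_i^\e}\|_{W^{1,\infty}(\partial B_\e(a_j^\e))}\le C\|h\|_{L^\infty(\partial\Om)}$, with $\|h\|_{L^\infty(\partial\Om)}$ bounded uniformly in $\e$ (the primitive $g_{b_i}$ of an $L^1$ datum is bounded, and $\theta_{a_i^\e}$ is bounded). By the scale invariance of the two-dimensional Dirichlet energy, $\int_{B_\e(a_j^\e)}|\nabla U^\e|^2$ equals the energy of the harmonic extension on the unit disk of the rescaled trace, whose tangential derivative has $L^\infty$-norm $O(\e)$; hence each term is $O(\e^2)$ and the sum is $\le C\e^2\to0$. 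Collecting the three estimates gives \eqref{thesis-515}. The main obstacle is precisely this last step: securing the uniform $W^{1,\infty}$ control of $\hat u^\e_{a_i^\e}$ on the shrinking circles, which is what renders the perforation error quadratic in $\e$; the distinctness of the limit points $a_i$ (ensuring $\e<\min_i d_i$ eventually, so that \eqref{grad-n3} applies) is essential here.
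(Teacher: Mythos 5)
Your argument is correct, and it follows the paper's skeleton for most of its length: the same decomposition $\bar v^\e_{a_i^\e}=\hat u^\e_{a_i^\e}+q^\e_{a_i^\e}$ from Remark~\ref{remuq}, the same reduction to $v_{a_i^\e}$ by continuity of the boundary-datum-to-harmonic-extension map, and the identical treatment of $q^\e_{a_i^\e}$ via the conjugate function $p_i^\e$ of \eqref{5ottobre} and the capacity bound \eqref{latesi} (the vanishing of the periods of $(\nabla q^\e_{a_i^\e})^\perp$, which you invoke implicitly, is exactly the paper's ``conservative vector field'' observation and follows from $\div \k_{a_i^\e}=0$). Where you genuinely diverge is the step $\nabla\hat u^\e_{a_i^\e}\to\nabla v_{a_i^\e}$: the paper extracts a weakly convergent subsequence of the harmonically extended $\hat u^\e_{a_i^\e}$ in $H^1(\Om)$, identifies the limit as $v_{a_i}$ by minimality, and upgrades to strong convergence via convergence of norms; you instead exploit the Euler--Lagrange orthogonality to get the exact Pythagorean identity $\int_{\Om_\e(\ba^\e)}|\nabla(\hat u^\e_{a_i^\e}-v_{a_i^\e})|^2=\int_{\Om_\e(\ba^\e)}|\nabla v_{a_i^\e}|^2-\int_{\Om_\e(\ba^\e)}|\nabla\hat u^\e_{a_i^\e}|^2$ and then squeeze the right-hand side between two competitor comparisons, reducing everything to the energy of the harmonic extension inside the holes. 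Both routes ultimately rest on the same ingredients (\eqref{grad-n}, \eqref{grad-n3}, and the scaling argument of Lemma~\ref{lemmaduenove}), but your version is quantitative --- it yields the explicit rate $\|\nabla\hat u^\e_{a_i^\e}-\nabla v_{a_i^\e}\|^2_{L^2(\Om_\e(\ba^\e))}\leq C\e^2$ --- and avoids compactness and subsequence extraction entirely, which is arguably cleaner; the paper's soft argument, on the other hand, delivers the strong $H^1(\Om)$ convergence of the extended $\hat u^\e_{a_i^\e}$ recorded in Remark~\ref{rem-conv}(i), which is reused later in the proof of Proposition~\ref{propG}.
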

\begin{proof}
Since the limit points $a_1,\ldots,a_n$ are all distinct and far from the boundary, we may assume, without loss of generality, that $\e<d:= \min_{i\in \{1,\ldots,n\}} d_i$ (see \eqref{dconi}), so that, for $\e$ small enough, the disks $B_\e(a_i^\e)$ are pairwise disjoint and do not intersect $\partial \Omega$. Let now $i\in\{1,\ldots,n\}$ be fixed. 
By Remark \ref{remuq}, the solution $\bar v^\e_{a_{i}^\e}$ can be written as $\bar v^\e_{a_{i}^\e}=\hat{u}_i^\e + q_i^\e$. 
Then to prove \eqref{thesis-515}, we will show that, for every fixed $i\in\{1,\ldots,n\}$,
\begin{subequations}
\begin{align}
& \| \nabla \hat u_i^\e - \nabla v_{a_i^\e}\|_{L^2(\Om_\e(\ba^\e);\R^2)} \to 0, \label{claim-uhat}
\\
&  \|\nabla q_i^\e \|_{L^2(\Om_\e(\ba^\e);\R^2)} \to 0. \label{claim-qeps}
\end{align}
\end{subequations}
Note that in \eqref{thesis-515} we can replace $v_{a_i}$ with $v_{a_i^\e}$ thanks to the continuity of the map  $C^0(\partial \Om)\ni h\mapsto w_h\in H^1(\Omega)$, where $w_h$ is harmonic in $\Om$ with boundary datum $h$.

By Lemma \ref{max_princ_n}, $\hat u_i^\e$ minimizes the Dirichlet energy in $H^1(\Om_\e(\ba^\e))$ with boundary datum $g_{b_i^\e}- \theta_{a_i^\e}$ on $\partial\Om$, and by \eqref{grad-n2} we have
\begin{equation*}
\|\nabla \hat u_i^\e \|^2_{L^2(\Om_\e(\ba^\e);\R^2)} \leq  C \|g_{b_i^\e}-\theta_{a_i^\e}\|^2_{H^{1/2}(\partial\Om)}.
\end{equation*}
Let us consider the extension of $\hat u_i^\e$ (not relabeled), which is harmonic in every $B_\e(a_j^\e)$. 
By \eqref{grad-n} and \eqref{grad-n3}, an estimate similar to \eqref{ce} holds: there exists $C>0$ independent of $\e$ such that
\begin{equation*}
\|\nabla \hat u_i^\e\|_{L^2(B_\e(a_j^\e);\R^2)}^2\leq C\e^2 \|g_{b_i^\e}-\theta_{a_i^\e}\|_{L^\infty(\partial \Om)}^2.
\end{equation*}
Since all the points $\a_i^\e$ are in $\Omega^{(d/2)}$, by the two estimates above,
we obtain a uniform bound in $H^1(\Om)$ for $\hat u_i^\e$, and therefore $\hat u_i^\e$ has a subsequence that converges weakly to a function $w_i$ in $H^1(\Om)$ as $\e\to0$. 
By the lower semicontinuity of the $H^1$ norm {
and the minimality of $\hat u_i^\e$ and $v_{a_i}$}, it turns out that $w_i=v_{a_i}$ and 
\[
\lim_{\e\to0} \|\nabla \hat u_i^\e\|_{L^2(\Omega;\R^2)}=\|\nabla  v_{a_i}\|_{L^2(\Omega;\R^2)};
\]
moreover, by the triangle inequality we have that
\begin{equation*}
\|\nabla \hat u_i^\e-\nabla v_{\a_i^\e}\|_{L^2(\Omega;\R^2)}\leq \|\nabla \hat u_i^\e-\nabla v_{\a_i}\|_{L^2(\Omega;\R^2)}+\|\nabla  v_{\a_i}-\nabla v_{\a_i^\e}\|_{L^2(\Omega;\R^2)},
\end{equation*}
with $v_{a_i}$ solution to \eqref{def-barv-a}.
The first term in the right-hand side above vanishes as $\e\to0$.
The second term converges to zero thanks to the continuity of the map that associates $v_a$ with the boundary datum $g-\theta_a$ (observe that $g-\theta_a^\eps\rightarrow g-\theta_a$ in $H^{1/2}(\partial\Om)$), so that 
\begin{equation}\label{strong-uhat}
\|\nabla \hat u_i^\e - \nabla v_{a_i^\e}\|_{L^2(\Omega;\R^2)}\to 0, 
\end{equation}
hence \eqref{claim-uhat} follows.

To obtain an estimate for the gradient of $q_i^\e$, we define $P_i^\e:=(\nabla q_i^\e)^\perp=(-\partial_2q_i^\e,\partial_1q_i^\e)$ and notice that $P_i^\e$ is a conservative vector field in $\Om_\e(\ba)$, namely its circulation vanishes along any closed loop contained in $\Om_\e(\ba^\e)$. 
Therefore, there exists $p_i^\e\in H^1(\Om_\e(\ba^\e))$ such that $P_i^\e=\nabla p_i^\e$ and, in view of Remark~\ref{remuq} with the fact that $-\k_{a_i^\e}\cdot\nu=\partial\log(|x-a_i^\e|)/\partial\tau$, $p_i^\e$ is a solution to \eqref{5ottobre}. Then by \eqref{latesi} we deduce that the gradient of $p_i^\e\to 0$ as $\e\to 0$ and since the gradients of $p_i^\e$ and $q_i^\e$ have the same modulus, we obtain \eqref{claim-qeps}. 
\end{proof}

\begin{remark}\label{rem-conv}
From the proof of Lemma \ref{conservative-n} we derive some important properties of the functions $\hat u_i^\e$ 
and $p_i^\e$ introduced in Remark~\ref{remuq}
and \eqref{5ottobre}, respectively. 
Let $a_1,\dots,a_n$ be distinct points in $\Om$ and let $(\a_1^\e,\dots,\a_n^\e)$ be a sequence of points in $\Om^n$ converging to $(\a_1,\dots,\a_n)$ as $\e\to0$.
Then, for every $i=1,\ldots,n$, as $\e\to 0$, we have
\begin{itemize}
\item[(i)] the function $\hat u_i^\e$ admits an extension which converges to $v_{a_i}$ strongly in $H^1(\Omega)$. This follows by combining the fact that $v_{a_i^\e}\to v_{a_i}$ strongly in $H^1(\Omega)$, $ \hat u_i^\e$  and $v_{a_i^\e}$ agree on the boundary, the Poincar\'e inequality, and property \eqref{strong-uhat};
\item[(ii)] the function $p_i^\e$ admits a (not relabeled) extension $p_i^\e\in H^1(\Om)$. 
In fact, recalling that $\phi_{a_i^\epsilon}(\cdot)=\log(|\cdot-a_i^\epsilon|)$, since $p_i^\e=\phi_{a_i^\epsilon}+c_j$ on $\partial B_\e(a_j^\e)$ for $j\neq i$ and $p_i^\e=c_i$ on $\partial B_\e(a_i^\e)$ for some constants $c_1,\dots, c_n$, we define $p_i^\e=\phi_{a_i^\e}+c_j$ on $B_\e(a_j^\e)$ and $p_i^\e\equiv c_i$ on $B_\e(a_i^\e)$. Thus, up to subtracting a constant,  thanks to \eqref{latesi1}, $p_i^\e$ is observed to tend to $0$ strongly in $H^1(\Om)$.
\end{itemize}
\end{remark}

\begin{lemma}\label{ext_lemma}
Let $a_1,\ldots,a_\ell$ be distinct points in $\overline\Omega$ and let $\bar v^\e_{a_i}\in H^1(\Om_\e(a_1,\ldots,a_\ell))$ solve the system in Remark~\ref{remuq} for some $i\in\{1,\ldots,\ell\}$.
Then the function $\bar v^\e_{a_i}$ admits an extension (not relabeled) in $H^1(\Omega)$.
In particular, there exist $\bar\e>0$ and a constant $C>0$ such that
\begin{equation*}
\int_{B_\e(a_j)}|\nabla \bar v^\eps_{a_i}|^2\,\de x\leq C,
\end{equation*}
for all $\eps< \bar\eps$ and for every $j\in\{1,\ldots,\ell\}$.
\end{lemma}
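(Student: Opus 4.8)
The plan is to build the extension hole by hole and to reduce the whole statement to a single scale-invariant estimate. The first step is to record that the Dirichlet energy of $\bar v^\e_{a_i}$ on $\Om_\e(a_1,\dots,a_\ell)$ is bounded uniformly in $\e$. Writing $\bar v^\e_{a_i}=\hat u_i^\e+q_i^\e$ as in Remark~\ref{remuq}, estimate \eqref{grad-n2} of Lemma~\ref{max_princ_n} controls $\|\nabla\hat u_i^\e\|_{L^2(\Om_\e(a_1,\dots,a_\ell))}$ by the $H^{1/2}(\partial\Om)$-norm of the datum $g_{b_i}-\omega(a_i)\theta_{a_i}$, which is finite because the weight $\omega(a_i)$ balances the jump at $b_i$. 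The gradient $\nabla q_i^\e$ has the same modulus as that of its harmonic conjugate $p_i^\e$, so the capacity comparison used to prove \eqref{latesi} bounds $\|\nabla q_i^\e\|_{L^2(\Om_\e(a_1,\dots,a_\ell))}$; that comparison only uses an upper bound obtained from a radial cutoff competitor and therefore carries over to the present fixed configuration, including points lying on $\partial\Om$. Hence $\|\nabla\bar v^\e_{a_i}\|_{L^2(\Om_\e(a_1,\dots,a_\ell))}\le C$ for $\e$ small.

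Next I fix $j\in\{1,\dots,\ell\}$. Since the $a_k$ are distinct, there is $\bar\e>0$ so that for $\e<\bar\e$ the disks $B_\e(a_k)$ are pairwise disjoint and the (possibly truncated) annular region $A:=A_\e^{2\e}(a_j)\cap\Om$ is contained in $\Om_\e(a_1,\dots,a_\ell)$. I then rescale by $x\mapsto(x-a_j)/\e$: by \eqref{H1} and the uniform interior cone condition \eqref{H3}, the rescaled regions $(\Om-a_j)/\e\cap B_2(0)$ are uniformly Lipschitz (they equal $B_2(0)$ when $a_j\in\Om$ and converge to a half-disk when $a_j\in\partial\Om$). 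On such domains the $H^{1/2}$ trace seminorm is controlled by the $L^2$-norm of the gradient, and since both quantities are invariant under adding constants and under the planar rescaling, one gets, with $C$ independent of $\e$,
\[
\big|\bar v^\e_{a_i}\big|_{H^{1/2}(\partial B_\e(a_j)\cap\Om)}^2\le C\,\|\nabla \bar v^\e_{a_i}\|_{L^2(A)}^2 .
\]

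I would then define $\bar v^\e_{a_i}$ inside the hole $B_\e(a_j)\cap\Om$ to be the harmonic extension of its trace on the arc $\partial B_\e(a_j)\cap\Om$, imposing the natural homogeneous Neumann condition on $\partial\Om\cap B_\e(a_j)$ when $a_j\in\partial\Om$. Because the arc $\partial B_\e(a_j)\cap\Om$ lies in the interior of $\Om$ and the extension matches the trace of $\bar v^\e_{a_i}$ across it, the glued function belongs to $H^1(\Om)$. Minimality of the harmonic extension and the usual scale-invariant energy bound on the (rescaled, uniformly Lipschitz) hole give
\[
\int_{B_\e(a_j)\cap\Om}|\nabla \bar v^\e_{a_i}|^2\,\de x\le C\big|\bar v^\e_{a_i}\big|_{H^{1/2}(\partial B_\e(a_j)\cap\Om)}^2\le C\,\|\nabla \bar v^\e_{a_i}\|_{L^2(\Om_\e(a_1,\dots,a_\ell))}^2\le C ,
\]
which is exactly the claimed estimate; performing this for every $j$ produces the desired global $H^1(\Om)$ extension.

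The routine parts are the uniform energy bound on $\Om_\e(a_1,\dots,a_\ell)$ and the harmonic-extension estimate on a fixed Lipschitz domain. The main obstacle I expect is the uniformity in $\e$ of the trace and extension constants for the holes that touch $\partial\Om$: there the domains $B_\e(a_j)\cap\Om$ degenerate as $\e\to0$, so one cannot appeal to a fixed reference domain. This is precisely where the $C^1$-regularity of $\partial\Om$ together with the cone condition \eqref{H3} enters, ensuring that the rescaled caps are uniformly Lipschitz and hence that the underlying constants do not blow up; for interior points the annuli and disks are genuine and the estimate follows directly from the two-dimensional scale invariance.
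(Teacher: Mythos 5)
Your argument is correct, but the extension mechanism is genuinely different from the paper's. The paper first extends $\bar v^\e_{a_i}$ to the exterior region $\R^2\setminus(\cup_k\overline B_\e(a_k)\cup\overline\Om)$ by solving an auxiliary Dirichlet problem, so that $\bar v^\e_{a_i}$ becomes available on the \emph{full} annulus $A_\e^{2\e}(a_j)$ even when $a_j\in\partial\Om$; it then fills each disk by composing with the inversion $\mathbb I^\e_{a_j}(x)=\e^2(x-a_j)/|x-a_j|^2$ (which maps $A_\e^{2\e}(a_j)$ onto $A_{\e/2}^{\e}(a_j)$ and preserves the Dirichlet energy) and multiplying by a cutoff vanishing on $B_{\e/2}(a_j)$. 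Your route stays inside $\Om$ and fills each (possibly truncated) hole with the harmonic extension of the trace, paying instead for the uniformity in $\e$ of the trace and extension constants on the rescaled caps; this is exactly where \eqref{H1} and \eqref{H3} must enter, as you note, and it is a standard consequence of the uniform Lipschitz character of the blow-ups of a convex $C^1$ domain (for interior $a_j$ the rescaled annulus is the fixed set $A_1^2(0)$, so nothing is needed there). What each approach buys: the inversion is explicit and bypasses trace theory, but it requires the auxiliary exterior extension and, strictly speaking, an $L^\infty$ or Poincar\'e control on the annulus to absorb the product-rule term $\nabla\zeta_\e\,(\bar v^\e_{a_i}\circ\mathbb I^\e_{a_j})$ --- a point the paper leaves to the reader; your harmonic extension avoids the cutoff entirely and is insensitive to additive constants because only the $H^{1/2}$ seminorm enters. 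Both proofs ultimately rest on the uniform bound for $\|\nabla\bar v^\e_{a_i}\|_{L^2(\Om_\e(a_1,\ldots,a_\ell))}$, which you make explicit via the decomposition $\bar v^\e_{a_i}=\hat u_i^\e+q_i^\e$, estimate \eqref{grad-n2}, and the capacity competitor behind \eqref{latesi}; the paper uses the same ingredients implicitly. Two small caveats: for $a_j\in\partial\Om$ your extension is defined only on $B_\e(a_j)\cap\Om$ (the paper's is defined on the whole disk thanks to the exterior extension), so your final estimate must be read on $B_\e(a_j)\cap\Om$ --- which is all that is needed in the later applications, e.g.\ in \eqref{stimaR}; and when applying the trace inequality on $A_\e^{2\e}(a_j)\cap\Om$ after subtracting a constant you should observe that this set is connected (so that the Poincar\'e step is legitimate), which again follows from convexity for $\e$ small.
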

\begin{proof}
We consider the domain $E:=\R^2\setminus(\cup_{i=1}^\ell \overline{B}_{\e}(a_i)\cup\overline\Om)$ and we extend $\bar v^\e_{a_i}$ on $E$ by defining it as the solution to the Dirichlet problem with datum $g_{b_i}-\omega(a_i)\theta_{a_i}$ on $\partial E\cap\partial\Omega$.
It is easy to check (one can use, for instance, \eqref{grad-n2}) 
that there exists a constant $C>0$ such that 
$$\int_{E}|\nabla \bar v^\e_{a_i}|^2\,\de x\leq C.$$
Let $\bar\e:=\tfrac14\min\{|a_i-a_j| : i,j\in\{1,\ldots,\ell\},i\neq j\}$.
For $\e<\bar\e$, consider a family of functions $\zeta_\e\in C^\infty(B_\e(0))$ which are zero in a neighborhood of $B_{\e/2}(0)$, equal to $1$ in a neighborhood of $\partial B_\e(0)$, and such that $||\nabla\zeta_\e||_{L^2(B_\e(0))}$ is uniformly bounded. 
We shall exploit the fact that $\bar v^\e_{a_i}$ is defined in the annulus $A_\e^{2\e}(a_j)$ for every $j\in\{1,\ldots,\ell\}$ to define its extension in $B_\e(a_j)$.
To this aim, consider the inversion function $\mathbb I_{a_j}^\e\colon\mathbb C\setminus \{a_j\} \to \mathbb C\setminus \{a_j\}$ given by $\mathbb I^\e_{a_j} (x):=\e^2 (x-a_j)/|x-a_j|^2$, and define 
$$\bar v^\e_{a_i}(x):=
\begin{cases}
\zeta_\e(x)\bar v^\e_{a_i}(\mathbb I_{a_j}^\e(x)) & \text{if $x\in\mathbb I_{a_j}^\e(A_\e^{2\e}(a_j))$,} \\
0 & \text{elsewhere in $B_\e(a_j)$.}
\end{cases}$$
Also in this case, an easy check shows that 
$$\int_{B_{\e}(a_j)}|\nabla \bar v^\e_{a_i}|^2\,\de x\leq C\int_{A^{2\e}_\e(a_j)}|\nabla \bar v^\e_{a_i}|^2\,\de x\leq C,$$
for some constant $C>0$ independent of $\bar\eps$, and for all $\e<\bar\e$. 
The lemma is proved.
\end{proof}

\section{The limit as $\e \to 0$}\label{twomeglcheone}

This section is devoted to the proof of the main results, Theorem~\ref{thGcn} and Corollary~\ref{confinon}, which is presented in Subsection~\ref{nonhaunnome}. 
Our proof strategy will rely on the results in the case of one dislocation $a\in\overline\Omega$, which we treat next in Subsection~\ref{n1}.
To study the asymptotic behavior of the rescaled energies \eqref{Fepsn}, we distinguish two different scenarios (recall the function $d_i$ defined in \eqref{dconi}):
\begin{itemize}
\item[-] all the limit points are in the interior of $\Omega$ and are all distinct, namely $\min_i d_i>0$ (treated in Subsection~\ref{ngen1});
\item[-] either at least two limit points coincide or one limit point is on the boundary $\partial \Om$, namely $\min_i d_i=0$ (treated in Subsection~\ref{ngen2}).
\end{itemize}

\subsection{The case $n=1$}\label{n1}
Given $\a\in \overline \Om$, the energy $\E\coloneqq\mathcal{E}_\e^{(1)}(a)$ in \eqref{energy-n} reads
\begin{equation*}
\E =\min \bigg\{\frac{1}{2}\int_{_{\oea}} \!\!|F|^2\,\de x:  \text{$ F\in L^2(\oea;\mathbb R^2)$, $\curl  F=0$, $ F\cdot \tau=f$ on $\partial \Om\setminus \overline{B}_\e(a)$}\bigg\},
\end{equation*}
and its rescaling $\mathcal F_\eps(\a)\coloneqq\mathcal{F}_\e^{(1)}(a)$ in \eqref{Fepsn} reads
\begin{equation}\label{Feps}
\mathcal F_\eps(\a)=\frac12\int_{\Om_\e(\a)}|\nabla u_\a^\e|^2\,\de x-\pi|\log \eps|\,,
\end{equation}
where $u_\a^\e$ solution to \eqref{euler-n} with $n=1$, namely
\begin{equation}\label{euler-u}
\begin{cases}
\Delta u^\e_\a = 0 & \hbox{in } \oea\setminus\Sigma, \\
[u^\e_\a]= 2\pi\ &\hbox{on }\Sigma\cap \Omega_\e(\a), \\
u^\e_\a = g & \hbox{on } \partial \Om\setminus \overline{B}_\e(\a), \\
{\partial u^\e_\a}/{\partial \nu} = 0 \ &  \hbox{on }\partial B_\e(\a) \cap \Omega, \\
\partial (u^\e_\a)^+/\partial \nu = \partial (u^\e_\a)^-/\partial \nu & \hbox{on } \Sigma\cap \Omega_\e(\a).
\end{cases}
\end{equation}
In the last equation above, $\nu$ is a choice of the unit normal vector to $\Sigma$.
Notice that, when $\dist(\a,\partial \Omega)\leq\e$, the choice of $b\in \partial\Omega\cap B_\e(\a)$ implies that $\oea\setminus \Sigma =\oea$ and the jump condition of $u^\e_\a$ across $\Sigma$ is empty.

In this case, the decomposition \eqref{recast-2} reads
\begin{equation}\label{dec1}
 \nabla u^\e_\a=\omega(a)\k_\a+\nabla \bar u^\e_\a,
\end{equation}
where $\bar u^\e_\a\in H^1(\oea)$ is the solution to
\begin{align}\label{dec_eqn}
\begin{cases}
  \Delta \bar u^\e_\a=0 &\text{ in  }\oea, \\
	\bar u^\e_\a=g-\omega(a)\theta_\a &\text{ on  }\partial \Omega\setminus \overline{B}_\e(a), \\
{\partial \bar u^\e_\a}/{\partial \nu}=0&\text{ on  }\partial B_\e(\a).
 \end{cases}
\end{align}
Notice that this function is the same as the $\bar v_\ba^\e$ in Remark~\ref{remuq} when $n=1$, thanks to \eqref{euler-k}.

Given $a^\e\to a$ as $\e\to0$, the asymptotics of $\mathcal F_\eps(\a^\e)$ when $\a\in \Omega$ is dealt with in Proposition~\ref{prop1}, whereas the case in which $a\in \partial \Omega$ is dealt with in Proposition~\ref{prop2}.
These two results readily imply Theorem~\ref{thGcn} in the case $n=1$ (the continuity of the limit functional $\mathcal F$ follows from the relationship between continuous convergence and $\Gamma$-convergence, see \cite[Remark 4.9]{dalmaso}).
\begin{proposition}\label{prop1}
Let $\a\in \Om$ and $v_\a$ satisfy \eqref{def-barv-a}. 
For every sequence $\a^\eps\to\a$ as $\e\to0$ we have
\begin{equation}\label{52}
\mathcal F_\eps(\a^\eps)\rightarrow
\pi\log d(\a)+\frac{1}{2}\int_{\Om_{d(\a)}(\a)}|\k_\a + \nabla v_\a |^2\,\de x+\frac{1}{2}\int_{B_{d(\a)}(\a)}|\nabla v_\a|^2\,\de x.
\end{equation}
\end{proposition}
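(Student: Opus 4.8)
The plan is to insert the decomposition \eqref{dec1} into the rescaled energy \eqref{Feps} and track the three resulting contributions separately. Since $a\in\Om$ and $a^\e\to a$, for $\e$ small we have $\omega(a^\e)=1$, so $\nabla u^\e_{a^\e}=\k_{a^\e}+\nabla \bar u^\e_{a^\e}$ and
\begin{equation*}
\frac12\int_{\Om_\e(a^\e)}|\nabla u^\e_{a^\e}|^2\,\de x=\frac12\int_{\Om_\e(a^\e)}|\k_{a^\e}|^2\,\de x+\int_{\Om_\e(a^\e)}\k_{a^\e}\cdot\nabla\bar u^\e_{a^\e}\,\de x+\frac12\int_{\Om_\e(a^\e)}|\nabla\bar u^\e_{a^\e}|^2\,\de x.
\end{equation*}
For the first term I would split $\Om_\e(a^\e)$ into the annulus $B_{d(a^\e)}(a^\e)\setminus\overline B_\e(a^\e)$ and the remainder $\Om_{d(a^\e)}(a^\e)$, recalling that $B_{d(a^\e)}(a^\e)$ is the largest disk centered at $a^\e$ contained in $\Om$. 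On the annulus $|\k_{a^\e}|^2=\rho^{-2}$ integrates in polar coordinates to $2\pi\log(d(a^\e)/\e)$, contributing $\pi|\log\e|+\pi\log d(a^\e)$ after the factor $\tfrac12$; the remainder is a fixed finite integral with no singularity. Using the continuity of $d(\cdot)$ and dominated convergence (the domains and integrands converge because $d(a)>0$), the first term minus $\pi|\log\e|$ converges to $\pi\log d(a)+\tfrac12\int_{\Om_{d(a)}(a)}|\k_a|^2\,\de x$; this is \eqref{solok} made quantitative.

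For the third term I would invoke Lemma~\ref{conservative-n} (with $n=1$, noting that $\bar u^\e_{a^\e}=\bar v^\e_{a^\e}$): since $\|\nabla\bar v^\e_{a^\e}-\nabla v_a\|_{L^2(\Om_\e(a^\e))}\to0$ and $|\Om\setminus\Om_\e(a^\e)|=|B_\e|\to0$ with $\nabla v_a\in L^2(\Om)$, the third term converges to $\tfrac12\int_\Om|\nabla v_a|^2\,\de x$, the uniform control on the excised disk needed for this passage being supplied by Lemma~\ref{ext_lemma}. The crux is the cross term, which cannot be handled by weak--strong convergence because $\k_{a^\e}\notin L^2(\Om)$. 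Here I would integrate by parts: since $\div\k_{a^\e}=0$ in $\Om_\e(a^\e)$ and $\k_{a^\e}\cdot\nu=0$ on $\partial B_\e(a^\e)$ by \eqref{euler-k}, while $\bar u^\e_{a^\e}\in H^1(\Om_\e(a^\e))$ is single-valued (the jumps of $u^\e$ and of $\theta_{a^\e}$ cancel), the Gauss--Green formula collapses the bulk integral to a boundary term, namely $\int_{\partial\Om}(\k_{a^\e}\cdot\nu)\,\bar u^\e_{a^\e}\,\de\mathcal H^1=\int_{\partial\Om}(\k_{a^\e}\cdot\nu)(g-\theta_{a^\e})\,\de\mathcal H^1$ by the Dirichlet datum in \eqref{dec_eqn}. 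Since $a^\e\to a\in\Om$, the field $\k_{a^\e}\cdot\nu$ converges uniformly on $\partial\Om$ and $\theta_{a^\e}\to\theta_a$, so this boundary integral converges to $\int_{\partial\Om}(\k_a\cdot\nu)v_a\,\de\mathcal H^1$, which by the same integration by parts on $\Om\setminus B_\delta(a)$ (letting $\delta\to0$) equals $\int_\Om\k_a\cdot\nabla v_a\,\de x$.

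Collecting the three limits yields $\pi\log d(a)+\tfrac12\int_{\Om_{d(a)}(a)}|\k_a|^2\,\de x+\int_\Om\k_a\cdot\nabla v_a\,\de x+\tfrac12\int_\Om|\nabla v_a|^2\,\de x$. To reconcile this with the right-hand side of \eqref{52}, I would expand the square $|\k_a+\nabla v_a|^2$ there and note two reductions: the two Dirichlet integrals recombine as $\tfrac12\int_\Om|\nabla v_a|^2$, and the cross term localizes, since $\int_{B_{d(a)}(a)}\k_a\cdot\nabla v_a\,\de x=0$. The latter follows from one more integration by parts on $B_{d(a)}(a)\setminus B_\delta(a)$: both $\div\k_a=0$ and the vanishing of $\k_a\cdot\nu$ on the two concentric circles kill every boundary contribution, so $\int_\Om\k_a\cdot\nabla v_a=\int_{\Om_{d(a)}(a)}\k_a\cdot\nabla v_a$. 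The main obstacle is thus the cross term: the right idea is to avoid passing the singular product to the limit and instead use the divergence-free structure of $\k$ to push everything onto $\partial\Om$, where the data are smooth and convergence is elementary.
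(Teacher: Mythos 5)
Your proposal is correct and follows essentially the same route as the paper: the same decomposition \eqref{dec1}, the same polar-coordinate extraction of $\pi\log(d^\e/\e)$ from the $|\k|^2$ term, the same integration by parts pushing the cross term onto $\partial\Om$ where dominated (or uniform) convergence applies, and the same divergence-theorem observation that $\int_{B_{d(a)}(a)}\k_a\cdot\nabla v_a\,\de x=0$ to match the stated form of \eqref{52}. The only cosmetic difference is that for the Dirichlet term you cite Lemma~\ref{conservative-n} (with $n=1$) rather than reproducing the paper's direct harmonic-extension/weak-convergence/minimality argument; since that lemma is proved independently earlier with the same ingredients, this is a legitimate shortcut and not a different method.
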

\begin{proof}
Since $a\in\Omega$, $\omega(a)=1$.
Since $d(a^\epsilon)\rightarrow d(a)$ and $d(a)>0$, we can take $\e$ so small so that $\e<\min\{d(a^\e),1\}$. 
By plugging \eqref{dec1} into \eqref{Feps}, $\mathcal F_\eps(\a^\eps)$ reads
$$\mathcal F_\eps(\a^\eps)=\frac{1}{2}\int_{\Om_\e(\a^\e)}|\nabla \bar u_{\a^\eps}^\e|^2\,\de x+\int_{\Om_\e(\a^\e)} \k_{\a^\eps}\cdot\nabla\bar u_{\a^\eps}^\e \,\de x +\frac{1}{2}\int_{\Om_{\eps}(\a^\e)}|\k_{\a^\eps}|^2\,\de x+\pi\log \eps.$$
Set, for brevity, $d:=d(\a)$, $d^\eps:=d(\a^\e)$, $\k:=\k_{\a}$, and $\k^\e:=\k_{\a^\e}$.
Writing 
\begin{equation*}
\begin{split}
\frac{1}{2}\int_{\Om_{\eps}(\a^\e)}|\k^{\e}|^2\,\de x=  \frac{1}{2}\int_{\Om_{d^\eps}(\a^\e)}|\k^{\e}|^2\,\de x+\frac{1}{2}\int_{A_{\eps}^{d^\e}(\a^\e)}|\k^{\e}|^2\,\de x = \frac{1}{2}\int_{\Om_{d^\eps}(\a^\e)}|\k^{\e}|^2\,\de x+\pi\log \frac{d^\e}\e,
\end{split}
\end{equation*}
we obtain
\begin{align}\label{formercase}
\mathcal F_\eps(\a^\e)= \pi\log d^\e+\frac{1}{2}\int_{\Om_{d^\e}(\a^\e)}|\k^{\e}|^2\,\de x+\int_{\Om_\e(\a^\e)} \k^{\e}\cdot\nabla\bar u_{\a^\e}^\e \,\de x + \frac{1}{2}\int_{\Om_\e(\a^\e)}|\nabla \bar u_{\a^\e}^\e|^2\,\de x.
\end{align}
If we prove that, as $\epsilon\to 0$,
\begin{subequations}
\begin{align}
\pi\log d^\e+\frac{1}{2}\int_{\Om_{d^\e}(\a^\e)}|\k^{\e}|^2\,\de x &\rightarrow \pi\log d+\frac{1}{2}\int_{\Om_{d}(\a)}|\k|^2\,\de x, \label{54a}\\
\int_{\Om_\e(\a^\e)} \k^{\e}\cdot\nabla\bar u_{\a^\e}^\e \,\de x &\rightarrow \int_{\Om_{d}(\a)} \k\cdot\nabla v_{\a}\,\de x, \label{claim2}\\
\frac{1}{2}\int_{\Om_\e(\a^\e)}|\nabla \bar u_{\a^\e}^\e|^2\,\de x &\rightarrow \frac{1}{2}\int_{\Om}|\nabla  v_{\a}|^2\,\de x,\label{claim1}
\end{align}
\end{subequations}
where $v_\a$ satisfies \eqref{def-barv-a}, then \eqref{52} follows.

Since $d^\eps\rightarrow d$, $\k^{\e}\chi_{B_{d^\eps}({\a^\e})}$ converges pointwise to $\k\chi_{B_{d}({\a})}$, and $\k^{\e}\chi_{B_{d^\eps}({\a^\e})}$ are uniformly bounded for $\eps$ small enough, \eqref{54a} follows by the Dominated Convergence Theorem.

To prove \eqref{claim2}, we integrate by parts to obtain
 $$\int_{\Om_\e(\a^\e)} \k^{\e}\cdot\nabla\bar u_{\a^\e}^\e \,\de x=\int_{\partial\Om} (\k^{\e}\cdot \nu) (g-\theta_{\a^\e})\,\de x,$$
where we have used \eqref{euler-k} and the fact that $\bar u_{\a^\e}^\e=g-\theta_{\a^\e}$ on $\partial\Om$ (see \eqref{dec_eqn}). 
Since $\k^{\e}$ and $\theta_{\a^\e}$ are uniformly bounded in $\eps$ on the set $ \partial\Om$, and converge pointwise to $\k$ and $\theta_{\a}$, respectively, by the Dominated Convergence Theorem, we have
$$
\int_{\Om_\e(\a^\e)} \k^{\e}\cdot\nabla\bar u_{\a^\e}^\e \,\de x \rightarrow \int_\Om \k\cdot\nabla v_{\a}\,\de x = \int_{\Om_{d}(\a)} \k\cdot\nabla v_{\a}\,\de x \,,
$$
which gives \eqref{claim2}. The last equality follows by the Divergence Theorem, combined with \eqref{euler-k}. 

It remains to prove \eqref{claim1}. 
To do this, consider the harmonic extension $w^\e$ of $\bar u_{\a^\e}^\e$ inside $B_\e(\a^\e)$.
By applying Lemma \ref{lemmaduenove} to $w^\e$ with $a$ replaced by $a^\e$, estimate \eqref{ce} reads
\begin{equation}\label{ce1}
\int_{B_\e(a^\e)}|\nabla w^\eps(x)|^2\,\de x\leq C\e^2\|g-\theta_{\a^\e}\|^2_{L^\infty(\partial\Om)},
\end{equation}
which implies that
\begin{equation}\label{cor_strong}
\|w^\eps\|_{H^{1}(B_\eps(\a^\e))}\rightarrow0,\qquad\text{as $\eps\rightarrow0$.}
\end{equation}
By combining \eqref{ce1} with \eqref{grad-n2}, 
we have
\begin{equation*}
\int_\Om |\nabla w^\e|^2\,\de x=  \int_{\Om_\e(\a^\e)}|\nabla \bar u^\eps_{\a^\e}|^2\,\de x+\int_{B_\eps(\a^\e)}|\nabla w^\eps|^2\,\de x
\leq  C\|g-\theta_{\a^\e}\|^2_{H^{1/2}(\partial\Om)}+C\e^2\|g-\theta_{\a^\e}\|^2_{L^\infty(\partial\Om)}.
\end{equation*}
Therefore, letting $\e\to0$, we obtain
\begin{equation*}
\limsup_{\e\to0} \int_\Om |\nabla w^\e|^2\,\de x \leq C\|g-\theta_{\a}\|^2_{H^{1/2}(\partial\Om)},
\end{equation*}
which, together with Poincar\'e inequality, implies that $w^\e$ is uniformly bounded in $H^1(\Om)$.
As a consequence  there exists $w\in H^1(\Om)$ such that (up to subsequences) $w^\e\rightharpoonup w$ weakly in $H^1(\Om)$.
Since $w^\e=g-\theta_{\a^\e}$ on $\partial\Om$ for every $\e$, then $w=g-\theta_\a$ on $\partial\Om$.
Since the $\bar u_{a^\e}^\e$ in \eqref{dec_eqn} is the minimizer of the Dirichlet energy,
we have that
\begin{equation*}
\frac12\int_{\Omega_\e(a^\e)}|\nabla \bar u^\e_{a^\e}|^2\,\de x \leq \frac12\int_{\Omega_\e(a^\e)}|\nabla v_{a^\e}|^2\,\de x \leq\frac12 \int_{\Om}|\nabla v_{a^\e}|^2\,\de x,
\end{equation*}
the lower semicontinuity of the $H^1$ norm, together with \eqref{cor_strong}, gives
\begin{equation}\label{lachiamo}
\frac12\int_\Om |\nabla w|^2\,\de x\leq \liminf_{\e\to0} \frac12\int_\Om |\nabla w^\e|^2\,\de x\leq \lim_{\e\to0}\frac12\int_\Om |\nabla v_{\a^\e}|^2\,\de x =\frac12\int_\Om |\nabla v_\a|^2\,\de x,
\end{equation}
which implies that $w=v_\a$, by the uniqueness of the minimizer $v_\a$. 
Thus, all the inequalities in \eqref{lachiamo} are in fact equalities.
This, together with \eqref{cor_strong}, gives \eqref{claim1} and completes the proof.
\end{proof}

\begin{proposition}\label{prop2}
Let $\a\in \partial\Om$ and $\a^\eps$ be a sequence of points in $\overline\Om$ converging to $\a$ as $\e\to0$. 
Then there exist two constants $C_1,C_2>0$ independent of $\e$, $\a^\e$, and $\a$, such that 
\begin{equation}\label{nuovatesi}
\mathcal F_\eps(\a^\e)\geq C_1|\log (\max\{\e,d(a^\e)\})|+C_2,
\end{equation}
for every $\epsilon$ small enough.
In particular, $\mathcal F_\eps(\a^\e)\rightarrow+\infty$ as $\e\to 0$.
\end{proposition}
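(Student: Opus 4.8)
The plan is to bound $\mathcal F_\eps(a^\e)$ from below \emph{directly}, without identifying the optimal field: since \eqref{Feps} is a minimum, it suffices to produce a universal lower bound for $\tfrac12\int_{\Om_\e(a^\e)}|\nabla u|^2$ valid for every admissible $u$. The mechanism I want to capture is the \emph{image} effect produced by the Dirichlet-type condition $\nabla u\cdot\tau=f$ (i.e. $u=g$ on $\partial\Om$): when $a^\e$ sits at distance $d:=d(a^\e)$ from $\partial\Om$, only a portion of each small circle centered at $a^\e$ lies in $\Om$, so the $2\pi$ circulation must be carried by a \emph{shorter} curve, which by Cauchy--Schwarz raises the energy density above the single-vortex value $2\pi/\rho$. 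Writing $\mu:=\max\{\e,d\}$, I aim to prove $\mathcal F_\eps(a^\e)\ge C_1|\log\mu|+C_2$.

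First I would set up the circulation on circular arcs. Let $p^\e\in\partial\Om$ realize $d$; by Lemma~\ref{lem-data} I place the cut (hence the jump of $g$) near $p^\e$, and fix a radius $\rho_0=\rho_0(\Om)$ small enough that $B_{\rho_0}(a^\e)\cap\Om$ is connected with a well-defined boundary arc. For $\e<\rho<\rho_0$ let $\Gamma_\rho:=\partial B_\rho(a^\e)\cap\Om$. If $d<\rho$, the endpoints of $\Gamma_\rho$ lie on $\partial\Om$ and, using $u=g$ there together with the $2\pi$ variation of $g$ concentrated near $p^\e$,
\begin{equation*}
\Big|\int_{\Gamma_\rho}\nabla u\cdot\tau\,\de\mathcal H^1\Big|=\big|g(\mathrm{end}_2)-g(\mathrm{end}_1)\big|\ge 2\pi-\eta(\rho),
\end{equation*}
where $\eta(\rho)\le\int_{\partial\Om\cap B_\rho(a^\e)}|f|\,\de\mathcal H^1\to0$ as $\rho\to0$ by absolute continuity of \eqref{datum}; whereas if $\rho<d$ the circle is entire and encircles the dislocation, so the circulation equals exactly $2\pi$. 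Thus on all the relevant scales the (partial) circle carries essentially the full circulation $2\pi$, \emph{both} when the core disk lies inside $\Om$ and when it bites into $\partial\Om$.

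Next I would bound the length of $\Gamma_\rho$ and integrate. Convexity \eqref{H1} confines $\Om$ to the half-plane bounded by the tangent line at $p^\e$, which is at distance $d$ from $a^\e$; hence the angular measure of $\Gamma_\rho$ is at most $\pi+2\arcsin(d/\rho)\le\tfrac{4\pi}{3}$ for $\rho\ge2\mu$, with uniformity in $a^\e$ guaranteed by \eqref{H3}. Cauchy--Schwarz then gives, for $2\mu<\rho<\rho_0$,
\begin{equation*}
\int_{\Gamma_\rho}|\nabla u|^2\,\de\mathcal H^1\ \ge\ \frac{(2\pi-\eta(\rho))^2}{\mathcal H^1(\Gamma_\rho)}\ \ge\ \frac{c}{\rho},\qquad c:=\frac{3\,(2\pi-\eta(\rho_0))^2}{4\pi}>2\pi,
\end{equation*}
the last inequality holding once $\rho_0$ is small enough. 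Integrating in $\rho$ by the coarea formula over $(2\mu,\rho_0)$, and adding the entire-circle contribution $\tfrac12\int_\e^{d}\tfrac{2\pi}{\rho}\,\de\rho=\pi\log(d/\e)$ in the case $d>\e$, I obtain $\tfrac12\int_{\Om_\e(a^\e)}|\nabla u|^2\ge \tfrac{c}{2}\log\tfrac{\rho_0}{2\mu}+\pi\log\tfrac{\mu}{\e}\,\mathbf{1}_{\{d>\e\}}-C$. Subtracting $\pi|\log\e|$ as in \eqref{Feps}, the $\log\mu$ terms combine with positive coefficient $\tfrac{c}{2}-\pi$ in both regimes $\mu=\e$ and $\mu=d$, yielding $\mathcal F_\eps(a^\e)\ge(\tfrac{c}{2}-\pi)|\log\mu|-C$; since $|\log\mu|\to\infty$, slightly shrinking the coefficient and taking $\e$ small absorbs $-C$ into a positive constant, which gives \eqref{nuovatesi}, and $\mathcal F_\eps(a^\e)\to+\infty$ follows.

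The main obstacle is the \emph{uniform geometry}: one must show that $\Gamma_\rho$ has angular measure bounded strictly below $2\pi$ uniformly in $a^\e$, $p^\e$ and $\rho$, and that the arc construction is legitimate for all small $\e$ — including the delicate case $d\le\e$, where the core disk overlaps $\partial\Om$ and $\Om_\e(a^\e)$ is simply connected, yet the $2\pi$ variation of $g$ pinned near $p^\e$ still forces the same logarithmic energy. This is exactly the point at which convexity \eqref{H1} and the uniform interior cone condition \eqref{H3} enter; the remainder is Cauchy--Schwarz and the coarea formula.
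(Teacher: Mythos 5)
Your argument is correct in substance, but it follows a genuinely different route from the paper's. The paper never estimates a generic competitor: it keeps the decomposition $\nabla u^\e=2\k_{b^\e}+\nabla\bar u$ after transferring the problem to the projection $b^\e$ of $a^\e$ onto $\partial\Om$ (using Lemma~\ref{lem-data}, as you do, to put the jump of $g$ there), absorbs the cross term by Young's inequality together with the a priori bound \eqref{grad-n2} on $\nabla\bar u$, and extracts the logarithm from the lower bound \eqref{solok1}, $\int|\k_{b^\e}|^2\ge\alpha|\log\e|+\alpha\log\ea$ with $\alpha>\pi/2$ from the interior cone condition \eqref{H3}; the positive gap is $(2-\lambda)\alpha-\pi$, i.e.\ $C_1=\alpha-\pi/2$, and the same slow/fast-collision dichotomy $\e\lessgtr d(a^\e)$ appears. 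You instead bound every admissible $u$ from below by the circulation-on-circles device of Ginzburg--Landau theory: the arc $\partial B_\rho(a^\e)\cap\Om$ carries circulation $2\pi-\eta(\rho)$ (full circulation for entire circles, and $g(\mathrm{end}_2)-g(\mathrm{end}_1)$ for truncated ones, with the $2\pi$ jump of $g$ pinned at the projection), while convexity confines it to a half-plane at distance $d$ from $a^\e$, so its length is at most $\tfrac{4\pi}{3}\rho$ for $\rho\ge 2\max\{\e,d\}$; Cauchy--Schwarz and integration in $\rho$ then give the excess $\tfrac{c}{2}-\pi$ with $c$ close to $3\pi$, which is quantitatively of the same order as the paper's $\alpha-\pi/2$. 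Your version is more elementary (no harmonic-function lemmas, no Young parameter) and is competitor-independent, at the price of the geometric bookkeeping you flag yourself: you should either prove that $\partial B_\rho(a^\e)\cap\Om$ is a single arc for $\rho<\rho_0(\Om)$ (this follows from the $C^1$ regularity in \eqref{H1}, since $\partial\Om\cap B_{\rho_0}(a^\e)$ is a graph over the tangent line at the projection; note that convexity alone does not give connectedness for thin domains), or observe that connectedness is not needed because the total circulation over all components is still $2\pi-\eta(\rho)$ and $\sum_i c_i^2/L_i\ge(\sum_i|c_i|)^2/\sum_i L_i$; you also need the standard a.e.-$\rho$ trace identification at the endpoints. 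Finally, the constant $C_2=\tfrac{c}{2}\log\tfrac{\rho_0}{2}$ you produce is negative, so strictly speaking you obtain \eqref{nuovatesi} only after absorbing it using $|\log\max\{\e,d(a^\e)\}|\to\infty$ for $\e$ small — but the paper's own $C_2$ in \eqref{F77} has exactly the same feature, and the conclusion $\mathcal F_\e(a^\e)\to+\infty$ is unaffected.
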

\begin{proof}
Since $a\in\partial\Omega$, $\omega(a)=2$.
Let $\alpha$ and $\ea$ be as in \eqref{H3}, let $\e<\min\{\ea,1\}$, and $d^\e<\ea/2$, where we set, for brevity, $d:=d(\a)$ and $d^\eps:=d(\a^\e)$. 
We distinguish two possible scenarios: the \emph{slow collision} $\eps<d^\eps$ and the \emph{fast collision}  $\eps\geq d^\eps$. 
In the former case $\eps<d^\eps$, exploiting \eqref{formercase} we get
\begin{equation}\label{F1}
\begin{split}
\mathcal F_\eps(\a^\e) & \geq \pi\log d^\e+\frac{1}{2}\int_{\Om_{d^\e}(\a^\e)}|\k_{a^\e}|^2\,\de x+\int_{\Om_{d^\e}(\a^\e)} \k_{a^\e}\cdot\nabla\bar u_{\a^\e}^\e \,\de x + \frac{1}{2}\int_{\Om_{d^\e}(\a^\e)}|\nabla \bar u_{\a^\e}^\e|^2\,\de x \\
&=\pi\log d^\e+\frac{1}{2}\int_{\Om_{d^\e}(\a^\e)}|\k_{a^\e}+\nabla \bar u_{\a^\e}^\e|^2\,\de x.
\end{split}
\end{equation}
where we have used that $\int_{\Om_{\e}(\a^\e)} \k_{a^\e}\cdot\nabla\bar u_{\a^\e}^\e \,\de x=\int_{\Om_{d^\e}(\a^\e)} \k_{a^\e}\cdot\nabla\bar u_{\a^\e}^\e \,\de x$.
By Lemma~\ref{lem-data}, we may assume that {the discontinuity point $\b^\eps$ of the boundary datum $g$ is one of the projections of $\a^\eps$ on $\partial\Om$}, so that $\b^\eps\in\partial B_{d^\e}(\a^\e)\cap \partial \Omega$. 
In particular, $\Omega_{2d^\e}(\b^\e) \subset \Omega_{d^\e}(\a^\e)$, so that
 \begin{equation}\label{F3}
 \begin{split}
&\int_{\Om_{d^\e}(\a^\e)}  |\k_{\a^\e} + \nabla \bar u_{\a^\e}^\e|^2\,\de x\geq \int_{\Omega_{2d^\e}(\b^\e)}|\k_{\a^\e} + \nabla \bar u_{\a^\e}^\e|^2\,\de x \\
&\geq \inf\bigg\{\int_{\Omega_{2d^\e}(\b^\e)}|\nabla u|^2\,\de x : u\in H^1(\Omega_{2d^\e}(\b^\e)), u= g\ \hbox{on }\partial \Omega \setminus\overline{B}_{2d^\e}(\b^\e)\bigg\} \\
&= \inf\bigg\{\int_{\Omega_{2d^\e}(\b^\e)}| 2 \k_{\b^\e} + \nabla u|^2\,\de x :  u\in H^1(\Omega_{2d^\e}(\b^\e)),  u= g -  2\theta_{\b^\e}\ \hbox{on }\partial \Omega \setminus \overline{B}_{2d^\e}(\b^\e)\bigg\} \\ 
&=\int_{\Omega_{2d^\e}(\b^\e)}|  2 \k_{\b^\e} + \nabla \bar{u}|^2\,\de x,
\end{split}
\end{equation}
where $\bar{u}$ solves 
$$
\begin{cases}
\Delta \bar{u} = 0 \quad & \hbox{in }\Omega_{2d^\e}(\b^\e),
\\
\bar{u} = g- 2\theta_{\b^\e} \quad & \hbox{on }\partial \Omega \setminus \overline{B}_{2d^\e}(\b^\e),
\\
\partial \bar{u} / \partial \nu = -  2 \k_{\b^\e}\cdot \nu \quad & \hbox{on } \partial B_{2d^\e}(\b^\e)\cap \Omega.
\end{cases}
$$
Since $\k_{\b^\e}\cdot \nu =0$ on $ \partial B_{2d^\e}(\b^\e)$ by \eqref{euler-k}, it follows by uniqueness that $\bar{u}=\bar u_{b^\e}^\e$,  where $\bar u^\e_\b\in H^1(\Om_\e(\b))$ is the solution to \eqref{dec_eqn}.
Therefore, using Young's inequality and \eqref{grad-n2} (with $h=g-2\theta_b$), recalling that $\e< d^\epsilon$,  we may bound \eqref{F3} as follows:
\begin{equation}\label{F5}
\begin{split}
\frac{1}{2}\int_{\Omega_{2d^\e}(\b^\e)}  |2\k_{\b^\e} + & \nabla \bar{u}|^2\,\de x \geq  2 \int_{\Omega_{2d^\e}(\b^\e)}|\k_{\b^\e}|^2\,\de x + 2 \int_{\Omega_{2d^\e}(\b^\e)} \k_{\b^\e} \cdot \nabla \bar{u} \,\de x  \\
\geq & 2 \int_{\Omega_{2d^\e}(\b^\e)}|\k_{\b^\e}|^2\,\de x - \lambda \int_{\Omega_{2d^\e}(\b^\e)} |\k_{\b^\e}|^2 \,\de x- \frac{1}{\lambda} \int_{\Omega_{2d^\e}(\b^\e)}|\nabla \bar{u}|^2\,\de x  \\
\geq & (2-\lambda) \int_{\Omega_{2d^\e}(\b^\e)}|\k_{\b^\e}|^2\,\de x - \frac{C}\lambda\max_{b\in\partial\Om}\|g-2\theta_{\b}\|^2_{H^{1/2}(\partial\Om)}, 
\end{split}
\end{equation}
where $\lambda>0$ is a constant that will be chosen later.
Now, in view of the assumption on $d^\e$ at the beginning of the proof, the set $\Omega_{2d^\e}(\b^\e)$ contains a sector, which, in polar coordinates centered at $\b^\e$, is the rectangle
$R:= (2d^\e,\ea)\times(\phi_0, \phi_1)$ with $\phi_1-\phi_0=\alpha$.
Therefore, in the case $\e<d^\e$, by combining \eqref{F1} and \eqref{F3} with \eqref{F5}, and using the estimate from below in \eqref{solok1}, it follows that
\begin{equation*}
\mathcal F_\e(\a^\e)  \geq  \big(\pi-(2-\lambda)\alpha\big)\log d^\eps  + (2-\lambda)\alpha \log\frac\ea2  - \frac{C}\lambda\max_{b\in\partial\Om}\|g-2\theta_{\b}\|^2_{H^{1/2}(\partial\Om)}.
\end{equation*} 
Recalling that $\alpha>\pi/2$ by \eqref{H3}, we can choose $\lambda= (\alpha - \pi/2)/\alpha$, so that the inequality above can be written as
\begin{equation}\label{F7}
 \mathcal F_\e(\a^\e)  \geq C_1 |\log d^\eps| +C_2 ,
\end{equation}
with
\begin{equation}\label{F77}
C_1:= \Big(\alpha-\frac{\pi}{2}\Big) \quad \text{ and } \quad C_2:= \Big(\alpha+\frac{\pi}{2}\Big) \log\frac\ea2  - \frac{C\alpha}{\alpha-\pi/2}\max_{b\in\partial\Om}\|g-2\theta_{\b}\|^2_{H^{1/2}(\partial\Om)}.
\end{equation}

Moreover, in the case $\e\geq d^\e$, we consider the projection $\b^\e$  of $\a^\e$ on $\partial\Om$, so that $\b^\e\in \partial B_{\e}(a^\e)\cap\partial\Om$, and the disk $B_{2\e}(b_\e)$ contains $B_{\e}(a^\e)$. 
Arguing as in \eqref{F3}, \eqref{F5}, and using \eqref{solok1} with $d^\e$ replaced by $\e$, we can estimate \eqref{Feps} as follows
\begin{equation}
\mathcal F_\eps(\a^\e)\geq \frac{1}{2}\int_{\Om_{2\e}(\b^\e)}|2\k_{\b^\e} + \nabla \bar u|^2\,\de x-\pi|\log\e|\geq  C_1|\log \eps|  + C_2,\label{F11}
\end{equation}
with the same constants $C_1$ and $C_2$ provided in \eqref{F77}. 

Therefore, by \eqref{F7} and \eqref{F11} for every $\e<\min \{\ea,1\}$, the thesis \eqref{nuovatesi} follows.
\end{proof}

\subsection{\textbf{The case $n>1$ with $\min_i d_i>0$}}\label{ngen1}
In this case, it is convenient to write the rescaled energy \eqref{Fepsn} as the sum of the rescaled energy of each dislocation plus a remainder term accounting for interactions: recalling the expression \eqref{Feps} for the rescaled energy of one dislocation and the decomposition \eqref{dec1}, we write the energy \eqref{Fepsn} as 
\begin{equation}\label{nonmiscordare}
\mathcal F_\epsilon^{(n)}(\ba)=\sum_{i=1}^n \mathcal F_\epsilon(a_i) + \sum_{i=1}^n\mathcal R_\epsilon(a_i) + \sum_{i\neq j} \mathcal G_\epsilon(\a_i,\a_j),
\end{equation}
where, for every $i=1,\ldots,n$, $\mathcal{F}_\e(a_i)$ is given by \eqref{Feps},
\begin{equation}\label{reps}
\mathcal R_\epsilon(a_i)\coloneqq\frac12\int_{\Omega_\e(\ba)} |\k_{a_i}+\nabla \bar v_{a_i}^\epsilon|^2\, \de x- \frac12 \int_{\Omega_\e(\a_i)} |\k_{a_i}+\nabla \bar u_{a_i}^\epsilon |^2\, \de x,
\end{equation}
and, for every $i,j=1,\ldots,n$, with $i\neq j$,
\begin{equation*}
\mathcal G_\epsilon(a_i,a_j)\coloneqq\int_{\Omega_\e(\ba)}(\k_{a_i}+ \nabla \bar v_{a_i}^\epsilon)\cdot (\k_{a_j}+ \nabla \bar v_{a_j}^\epsilon)\, \de x,
\end{equation*}
$\bar u^\e_{a_i}$ being the solution to \eqref{dec_eqn} associated with $a_i$, and $\bar v_{a_i}^\e$ being as in Remark~\ref{remuq}.

\begin{proposition}\label{propR}
Let $\ba=(\a_1,\dots,\a_n)\in \Om^n$ be an $n$-tuple of distinct points and let $\ba^\e$ 
be a sequence converging in $\Om^n$ to $\ba$ 
as $\e\to0$. Then, for every $i=1,\dots, n$, we have 
$\mathcal R_\epsilon(a_i^\epsilon)\rightarrow 0$ as $\e\to0$.
\end{proposition}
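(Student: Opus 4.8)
The plan is to expand the two squared norms in the definition \eqref{reps} and to show that the resulting difference splits into three pieces, each of which vanishes in the limit. Writing $\ba^\e$ for the moving $n$-tuple, I decompose
$$\mathcal R_\e(a_i^\e)=I_\e+II_\e+III_\e,$$
where $I_\e$ collects the $|\k_{a_i^\e}|^2$ terms, $II_\e$ the cross terms $\k_{a_i^\e}\cdot\nabla(\,\cdot\,)$, and $III_\e$ the $|\nabla(\,\cdot\,)|^2$ terms. The crucial point is that one must never isolate the integrals of $|\k_{a_i^\e}|^2$ individually --- each diverges like $\pi|\log\e|$ by \eqref{solok} --- but only their difference, which is supported on the shrinking disks $B_\e(a_j^\e)$, $j\neq i$, away from the singularity.

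For the first piece, since $\Om_\e(\ba)=\Om_\e(a_i)\setminus\bigcup_{j\neq i}\overline B_\e(a_j)$, one has $I_\e=-\tfrac12\sum_{j\neq i}\int_{B_\e(a_j^\e)}|\k_{a_i^\e}|^2\,\de x$; because the limit points are distinct, $|\k_{a_i^\e}|\le C$ on each $B_\e(a_j^\e)$ for $\e$ small, so $I_\e=O(\e^2)\to0$. For the third piece, I would show that both integrals converge to $\tfrac12\int_\Om|\nabla v_{a_i}|^2\,\de x$: the $\bar u^\e$ integral by claim \eqref{claim1} of Proposition~\ref{prop1} applied to the single dislocation $a_i^\e$, and the $\bar v^\e$ integral by Lemma~\ref{conservative-n}, which gives $\nabla\bar v^\e_{a_i^\e}\to\nabla v_{a_i}$ strongly in $L^2(\Om_\e(\ba))$, together with $|\Om\setminus\Om_\e(\ba)|\to0$ and $\nabla v_{a_i}\in L^2(\Om)$. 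Hence $III_\e\to0$.

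The main obstacle is the cross piece $II_\e$, because $\k_{a_i^\e}$ is not square integrable near $a_i^\e$, so the strong $L^2$-convergence of $\nabla\bar v^\e_{a_i^\e}$ cannot be paired with it directly. The remedy is to integrate by parts, using $\div\k_{a_i^\e}=0$ from \eqref{euler-k}:
$$\int_{\Om_\e(\ba)}\k_{a_i^\e}\cdot\nabla\bar v^\e_{a_i^\e}\,\de x=\int_{\partial\Om}(\k_{a_i^\e}\cdot\nu)\,\bar v^\e_{a_i^\e}\,\de\mathcal H^1+\sum_{j\ne i}\int_{\partial B_\e(a_j^\e)}(\k_{a_i^\e}\cdot\nu)\,\bar v^\e_{a_i^\e}\,\de\mathcal H^1,$$
where the circle $\partial B_\e(a_i^\e)$ gives no contribution since $\k_{a_i^\e}\cdot\nu=0$ there by \eqref{euler-k}, and only the $j\neq i$ circles remain. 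These contribute $O(\e)$, using that $\k_{a_i^\e}\cdot\nu$ is bounded on them and that $\bar v^\e_{a_i^\e}$ is uniformly bounded (by \eqref{grad-n} and \eqref{latesi1} applied to the splitting $\bar v^\e_{a_i^\e}=\hat u^\e_{a_i^\e}+q^\e_{a_i^\e}$ of Remark~\ref{remuq}). On $\partial\Om$ one has $\bar v^\e_{a_i^\e}=g_{b_i^\e}-\theta_{a_i^\e}$, and dominated convergence gives the limit $\int_{\partial\Om}(\k_{a_i}\cdot\nu)\,v_{a_i}\,\de\mathcal H^1$, which equals $\int_\Om\k_{a_i}\cdot\nabla v_{a_i}\,\de x$ by the divergence theorem (the improper integral converging absolutely, since $\nabla v_{a_i}$ is bounded near $a_i$). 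As the $\bar u^\e$ cross term converges to the same quantity by claim \eqref{claim2} of Proposition~\ref{prop1}, we conclude $II_\e\to0$, and therefore $\mathcal R_\e(a_i^\e)\to0$.
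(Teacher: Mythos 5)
Your overall strategy is sound and, apart from one step, each piece is correctly justified: the splitting $\mathcal R_\e=I_\e+II_\e+III_\e$, the $O(\e^2)$ bound for $I_\e$ (using that $\Om_\e(\ba^\e)=\Om_\e(a_i^\e)\setminus\bigcup_{j\neq i}\overline B_\e(a_j^\e)$ and that $\k_{a_i^\e}$ is bounded there), the treatment of $III_\e$ via \eqref{claim1} and Lemma~\ref{conservative-n}, and the integration by parts for $II_\e$ all work. Note that this is organized differently from the paper, which instead writes $\tfrac12|A|^2-\tfrac12|B|^2=\tfrac12(A+B)\cdot(A-B)$ with $A=\k_i^\e+\nabla\bar v_i^\e$, $B=\k_i^\e+\nabla\bar u_i^\e$ over the common domain $\Om_\e(\ba^\e)$, integrates by parts using harmonicity and the matching boundary data to identify that term with $-\tfrac12\|\nabla\bar v_i^\e-\nabla\bar u_i^\e\|^2_{L^2}$ (which vanishes by Lemma~\ref{conservative-n}), and confines the rest to $\tfrac12\sum_{j\ne i}\int_{B_\e(a_j^\e)}|\k_i^\e+\nabla\bar u_i^\e|^2=O(\e^2)$. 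The paper's route only ever estimates \emph{differences} and so never needs to identify the limit of each cross term separately; your route does, which is why you are forced into the delicate point below.

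The gap is in your treatment of the circles $\partial B_\e(a_j^\e)$, $j\ne i$, in $II_\e$: you claim $\bar v^\e_{a_i^\e}$ is uniformly bounded there ``by \eqref{grad-n} and \eqref{latesi1} applied to the splitting $\bar v^\e_{a_i^\e}=\hat u^\e_{a_i^\e}+q^\e_{a_i^\e}$''. Estimate \eqref{grad-n} does bound $\hat u^\e_{a_i^\e}$ in $L^\infty$, but \eqref{latesi1} bounds $p_i^\e$, the harmonic \emph{conjugate} of $q_i^\e$ (they are related by $(\nabla q_i^\e)^\perp=\nabla p_i^\e$), and an $L^\infty$ bound on a harmonic function gives no $L^\infty$ bound on its conjugate. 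So as written the $O(\e)$ claim for the $q$-part of the circle integrals is unjustified. The conclusion is nevertheless true and can be repaired without changing your architecture: since $\k_{a_i^\e}\cdot\nu=0$ on $\partial B_\e(a_i^\e)$ and $\partial q_i^\e/\partial\nu=-\k_{a_i^\e}\cdot\nu$ on every circle (Remark~\ref{remuq}), while $q_i^\e=0$ on $\partial\Om$, Green's identity gives
\begin{equation*}
\sum_{j\ne i}\int_{\partial B_\e(a_j^\e)}(\k_{a_i^\e}\cdot\nu)\,q_i^\e\,\de\mathcal H^1
=-\int_{\partial\Om_\e(\ba^\e)}\frac{\partial q_i^\e}{\partial\nu}\,q_i^\e\,\de\mathcal H^1
=-\int_{\Om_\e(\ba^\e)}|\nabla q_i^\e|^2\,\de x
=-\int_{\Om_\e(\ba^\e)}|\nabla p_i^\e|^2\,\de x,
\end{equation*}
which tends to $0$ by \eqref{latesi}; alternatively you can reproduce the manipulation \eqref{convg4} of Proposition~\ref{propG}. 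With this correction (and keeping your $L^\infty$ argument only for the $\hat u$-part), the proof goes through.
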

\begin{proof}
For brevity, we define the $d_i^\e$'s as in \eqref{dconi}, associated with the family $\{a_1^\e,\ldots,a_n^\e\}$, and the $d_i$'s associated with the family $\{a_1,\ldots,a_n\}$.
By assumption $d_i^\e\to d_i>0$ for every $i=1,\ldots,n$; therefore, without loss of generality, we may take $\e< \min_i d_i$.
In particular, the disks $B_\e(a_i^\e)$ are all contained in $\Om$ and are pairwise disjoint for every $\e\in (0,\min_i{d_i})$. 

Fix now $i\in\{1,\ldots,n\}$. 
The remainder \eqref{reps} evaluated at $a^\e_i$ can be written as $\mathcal R_\epsilon(a_i^\e)=\mathcal R'_\e(a_i^\e)- \mathcal R''_\e(a_i^\e)$, with
$$
\mathcal R'_\e(a_i^\e)\coloneqq \frac12\int_{\Omega_\e(\ba^\e)}\big(2\k_{i}^\e+\nabla \bar v_{i}^\epsilon + \nabla \bar u_{i}^\epsilon\big)\cdot\big(\nabla \bar v_{i}^\epsilon- \nabla \bar u_{i}^\epsilon\big)\, \de x,
$$
and
$$
\mathcal R''_\e(a_i^\e)\coloneqq \frac{1}{2}\sum_{j\neq i}\int_{B_\e(a_j^\e)}|\k_{i}^\e+\nabla \bar u_{i}^\epsilon|^2\, \de x,
$$
where, for brevity, we have replaced the subscript $\a_j^\e$ with the subscript $j$ coupled with the superscript $\e$.
By the Divergence Theorem, \eqref{dec_eqn}, and Remark~\ref{remuq}, we have
\[
\begin{split}
\mathcal R'_\e(a_i^\e)
&=\frac12\int_{\partial\Om_\epsilon(\ba^\e)} \big(-\partial \bar v_{i}^\epsilon/\partial\nu + \partial \bar u_{i}^\epsilon/\partial\nu\big) (\bar v_i^\e-\bar u_i^\e)\, \de x\\
&=\frac12\int_{\Omega_\e(\ba^\e)}\big(-\nabla \bar v_{i}^\epsilon+\nabla \bar u_{i}^\epsilon\big)\cdot\big(\nabla \bar v_{i}^\epsilon- \nabla \bar u_{i}^\epsilon\big)\, \de x=
-\frac12\int_{\Omega_\e(\ba^\e)}\big|\nabla \bar v_{i}^\epsilon- \nabla \bar u_{i}^\epsilon\big|^2\, \de x.
\end{split}
\]
Then,
$\mathcal R'_\e(a_i^\e)\leq\|\nabla \bar v_{i}^\epsilon- \nabla v_{i}^\epsilon\big\|_{L^2(\Omega_\e(\ba^\e);\R^2)}+\|\nabla  v_{i}^\epsilon- \nabla \bar u_{i}^\epsilon\big\|_{L^2(\Omega_\e(\ba^\e);\R^2)}$,
which, in view of Lemma \ref{conservative-n}, 
converges to $0$, as $\e\to0$.  
Moreover, in view of Lemma~\ref{max_princ_n} (with $n=1$ and $h=g_{b_i^\e}-\theta_i^\e$) and the fact that $\|\k_i^\e\|_{L^\infty( B_\e(\a_j^\e);\R^2)}\leq 1/(2d_i^\e-\epsilon)$ for every $j\neq i$, we may bound $\mathcal R''_\e(a_i^\e)$ as follows:
$$
\mathcal R''_\e(a_i^\e) \leq \sum_{j\neq i} \int_{B_\e(a_j^\e)} \big(|\k_i^\e|^2 + |\nabla \bar u^\e_i|^2\big)\, \de x \leq
\pi(n-1) \e^2 \bigg(\frac{1}{(2d_i^\e-\e)^2} + C \|g_{b_i^\e}- \theta^\e_{i}\|^2_{L^\infty(\partial \Om)} \bigg),
$$
for some constant $C>0$ independent of $\e$. In particular $\mathcal R''_\e(a_i^\e)$ tends to zero as $\epsilon\to 0$. This concludes the proof of the proposition.
\end{proof}

\begin{proposition}\label{propG}
Let $\ba=(\a_1,\dots,\a_n)\in \Om^n$ be an $n$-tuple of distinct points and let $\ba^\e$ 
be a sequence converging in $\Om^n$ to $\ba$ 
as $\e\to0$. Then, for every $i,j = 1,\dots, n$, with $i\neq j$, we have 
\begin{equation}\label{convG}
\mathcal G_\epsilon(a_i^\epsilon,\a_j^\e)\rightarrow \int_{\Omega}(\k_{a_i}+ \nabla v_{a_i})\cdot (\k_{a_j}+ \nabla v_{a_j})\, \de x, \quad\hbox{as }\e\to0,
\end{equation}
$v_{a_i}$ being the solution to \eqref{def-barv-a} associated with $a_i$.
\end{proposition}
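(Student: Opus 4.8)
The plan is to expand the integrand defining $\mathcal G_\e(a_i^\e,a_j^\e)$ into its four bilinear contributions and to treat each according to its degree of singularity. Since $a_i\neq a_j$ and both limits lie in $\Om$, I take $\e<\min_k d_k$ (see \eqref{dconi}), so that the disks $B_\e(a_k^\e)$ are pairwise disjoint and compactly contained in $\Om$, and I write
\begin{align*}
\mathcal G_\e(a_i^\e,a_j^\e)
&=\int_{\Om_\e(\ba^\e)}\k_{a_i^\e}\cdot \k_{a_j^\e}\,\de x
+\int_{\Om_\e(\ba^\e)}\k_{a_i^\e}\cdot\nabla\bar v^\e_{a_j^\e}\,\de x \\
&\quad
+\int_{\Om_\e(\ba^\e)}\nabla\bar v^\e_{a_i^\e}\cdot \k_{a_j^\e}\,\de x
+\int_{\Om_\e(\ba^\e)}\nabla\bar v^\e_{a_i^\e}\cdot\nabla\bar v^\e_{a_j^\e}\,\de x.
\end{align*}
I will show that these four terms converge, respectively, to $\int_\Om \k_{a_i}\cdot \k_{a_j}\,\de x$, $\int_\Om \k_{a_i}\cdot\nabla v_{a_j}\,\de x$, $\int_\Om\nabla v_{a_i}\cdot \k_{a_j}\,\de x$, and $\int_\Om\nabla v_{a_i}\cdot\nabla v_{a_j}\,\de x$, whose sum is the right-hand side of \eqref{convG}.

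For the singular--singular term, the key point is that, because the two cores sit at the distinct points $a_i^\e$ and $a_j^\e$, the product $\k_{a_i^\e}\cdot \k_{a_j^\e}$ is only mildly singular: it behaves like $|x-a_i^\e|^{-1}$ near $a_i^\e$ and like $|x-a_j^\e|^{-1}$ near $a_j^\e$, so its mass is uniformly integrable. I would fix $\delta>0$ and split $\Om$ into $B_\delta(a_i)$, $B_\delta(a_j)$, and the remainder; on the remainder both factors are bounded and converge uniformly, while on each small disk one factor is bounded and the $L^1$-mass of the other is $O(\delta)$ uniformly in $\e$. Letting $\e\to0$ and then $\delta\to0$ yields $\int_\Om \k_{a_i}\cdot \k_{a_j}\,\de x$. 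The discrepancy between integrating over $\Om_\e(\ba^\e)$ and over $\Om$ is the sum of the integrals over the excised disks, each of which is $O(\e)$ by the same bounds (or by Lemma~\ref{scalare}(ii)).

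For the two mixed terms I would integrate by parts exactly as in the derivation of \eqref{claim2}. Since $\div \k_{a_i^\e}=0$ and $\k_{a_i^\e}$ is smooth on $\overline{\Om_\e(\ba^\e)}$, while $\bar v^\e_{a_j^\e}\in H^1(\Om_\e(\ba^\e))$ is single-valued,
\[
\int_{\Om_\e(\ba^\e)}\k_{a_i^\e}\cdot\nabla\bar v^\e_{a_j^\e}\,\de x
=\int_{\partial\Om}(\k_{a_i^\e}\cdot\nu)(g_{b_j^\e}-\theta_{a_j^\e})\,\de\mathcal H^1
+\sum_{k\neq i}\int_{\partial B_\e(a_k^\e)}(\k_{a_i^\e}\cdot\nu)\,\bar v^\e_{a_j^\e}\,\de\mathcal H^1,
\]
the contribution on $\partial B_\e(a_i^\e)$ vanishing because $\k_{a_i^\e}\cdot\nu=0$ there by \eqref{euler-k}. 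The inner-circle terms are $O(\e)$ (bounded integrand over curves of length $2\pi\e$, with $\bar v^\e_{a_j^\e}$ bounded by the maximum principle of Lemma~\ref{max_princ_n}), and the boundary term converges by dominated convergence, since $\k_{a_i^\e}\cdot\nu\to \k_{a_i}\cdot\nu$ uniformly on $\partial\Om$ (as $a_i\in\Om$) and $g_{b_j^\e}-\theta_{a_j^\e}$ is bounded and converges a.e. Undoing the integration by parts in the limit, using $v_{a_j}=g_{b_j}-\theta_{a_j}$ on $\partial\Om$ (see \eqref{def-barv-a}) together with $\div \k_{a_i}=0$, produces $\int_\Om \k_{a_i}\cdot\nabla v_{a_j}\,\de x$. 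The third term is handled identically with the roles of $i$ and $j$ interchanged.

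For the regular--regular term I would invoke Lemma~\ref{conservative-n}, Lemma~\ref{ext_lemma}, and Remark~\ref{rem-conv}(i): the $H^1(\Om)$-extensions of $\bar v^\e_{a_i^\e}$ and $\bar v^\e_{a_j^\e}$ converge strongly to $v_{a_i}$ and $v_{a_j}$, whence $\int_\Om\nabla\bar v^\e_{a_i^\e}\cdot\nabla\bar v^\e_{a_j^\e}\,\de x\to\int_\Om\nabla v_{a_i}\cdot\nabla v_{a_j}\,\de x$, while the contributions over the shrinking disks $B_\e(a_k^\e)$ vanish because strong $L^2$ convergence forces the $L^2$-mass over sets of measure tending to zero to disappear. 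Summing the four limits reproduces \eqref{convG}. I expect the main obstacle to be the singular--singular term: each $\k_{a_i^\e}$ has $L^2(\Om_\e(\ba^\e))$-norm blowing up like $|\log\e|$ (Lemma~\ref{scalare}(iii)), so neither weak $L^2$ compactness nor a fixed dominating function is available, and one must exploit the separation of the two cores to keep the product integrable with its mass concentrated away from both singularities. Keeping $\k$ in its single-valued form, rather than writing $\k=\nabla\theta$, is precisely what makes the integration by parts in the mixed terms free of jump contributions across the cuts.
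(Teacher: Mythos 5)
Your four-term decomposition, your treatment of the singular--singular term (domain splitting plus dominated convergence, with the excised disks controlled as in \eqref{k3}--\eqref{k4}), and your treatment of the regular--regular term (strong $L^2$ convergence of the extended $\nabla\bar v^\e_{a_k^\e}$ from Lemma~\ref{conservative-n}) all match the paper's proof. The gap is in the mixed terms. After integrating by parts you are left with $\sum_{k\neq i}\int_{\partial B_\e(a_k^\e)}(\k_{a_i^\e}\cdot\nu)\,\bar v^\e_{a_j^\e}\,\de\mathcal H^1$, and you dismiss these as $O(\e)$ on the grounds that $\bar v^\e_{a_j^\e}$ is bounded ``by the maximum principle of Lemma~\ref{max_princ_n}.'' But that lemma applies to the auxiliary function with \emph{homogeneous} Neumann data on the inner circles, whereas $\bar v^\e_{a_j^\e}$ satisfies $\partial\bar v^\e_{a_j^\e}/\partial\nu=-\k_{a_j^\e}\cdot\nu$ there, which is nonzero (and only $O(1)$) on $\partial B_\e(a_k^\e)$ for $k\neq j$. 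No uniform $L^\infty$ bound on $\bar v^\e_{a_j^\e}$ is available from the results you cite: its Dirichlet energy is bounded, but in two dimensions $H^1$ does not control $L^\infty$, and the maximum principle does not apply to a nonhomogeneous Neumann problem in this way. This is precisely why the paper splits $\bar v^\e_{a_j^\e}=\hat u_j^\e+q_j^\e$ as in Remark~\ref{remuq}: the piece $\hat u_j^\e$ \emph{is} covered by Lemma~\ref{max_princ_n} and, after harmonic extension into the disks, contributes only the $\partial\Om$ term plus corrections $\int_{B_\e(a_k^\e)}\k_{a_i^\e}\cdot\nabla\hat u_j^\e\,\de x$ that vanish because $\nabla\hat u_j^\e$ has vanishing $L^2$ mass on those shrinking disks; the piece $q_j^\e$ is rotated into its harmonic conjugate $p_j^\e$ solving \eqref{5ottobre}, and the resulting term $\int\nabla\phi_i^\e\cdot\nabla p_j^\e\,\de x$ is killed using the $L^\infty$ bound \eqref{latesi1} and the energy bound \eqref{latesi}, as in \eqref{convg4}.

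To repair your argument you should therefore route the mixed terms through the decomposition of Remark~\ref{remuq} rather than through $\bar v^\e_{a_j^\e}$ directly; alternatively, you would need to supply an independent proof that the oscillation of $\bar v^\e_{a_j^\e}$ on each $\partial B_\e(a_k^\e)$, $k\neq i$, tends to zero (note that $\int_{\partial B_\e(a_k^\e)}\k_{a_i^\e}\cdot\nu\,\de\mathcal H^1=0$, so only the oscillation matters), which is not a consequence of anything established in the paper. The rest of your proposal is sound.
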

\begin{proof}
For brevity, we define the $d_i^\e$'s as in \eqref{dconi}, associated with the family $\{a_1^\e,\ldots,a_n^\e\}$, and the $d_i$'s associated with the family $\{a_1,\ldots,a_n\}$.
Fix $i,j\in\{a,\ldots,n\}$, $i\neq j$. By assumption $d_i^\e\to d_i>0$ and $d_j^\e\to d_j>0$; therefore, without loss of generality, we may take $\e< \min \{d_i,d_j\}$. Since the limit points 
are distinct, in view of Lemma~\ref{conservative-n}, we have 
$$
\chi_{\Omega_\e(\ba^\e)} \nabla\barv_{\a_i^\e}\to \nabla v_{a_i}\quad\text{and}\quad
\chi_{\Omega_\e(\ba^\e)} \nabla\barv_{\a_j^\e}\to \nabla v_{a_j}\quad \hbox{strongly in } L^2(\Om;\R^2),
$$
so that
\begin{equation}\label{convg1}
\int_{\Omega_\e(\ba^\e)}  \nabla\barv_{\a_i^\e}\cdot  \nabla\barv_{\a_j^\e}\,\de x \to \int_\Om \nabla v_{\a_i}\cdot  \nabla v_{\a_j}\,\de x .
\end{equation}
Setting for brevity $d:=\min \{d_i,d_j\}$, we decompose the domain of integration as
$$
\Omega_\e(\ba^\e) = \big(\Omega_{d}(a_i^\e,a_j^\e) \cup B_{d}(a_i^\e) \cup B_{d}(a_j^\e) \big) \setminus \bigcup_{k=1}^n \overline B_\e(a_k^\e).
$$
Since $\k_{\a_i^\e}\to \k_{a_i}$ and $ \k_{\a_j^\e}\to \k_{a_j}$ a.e.\ in $\Om$, by the Dominated Convergence Theorem it is easy to see that, as $\e\to 0$,
\begin{equation}\label{k1}
\int_{\Om_d(a_i^\e,a_j^\e)} \k_{\a_i^\e}\cdot \k_{\a_j^\e} \,\de x\to \int_{\Om_d(a_i,a_j)} \k_{\a_i}\cdot \k_{\a_j}\,\de x
\end{equation}
and
\begin{equation}\label{k2}
\int_{B_{d}(a_i^\e)} \k_{\a_i^\e}\cdot \k_{\a_j^\e}\,\de x = 
\int_{B_{d}(a_i)} \frac{ \hat{\theta}_{a_i} \cdot \hat\theta_{a_j^\e - (a_i^\e-a_i)}}{|x-a_i|\, |x-a_j^\e + (a_i^\e-a_i)|} \, \de x \to \int_{B_d(a_i)} \k_{\a_i}\cdot \k_{\a_j}\,\de x.
\end{equation}
An analogous result holds exchanging the roles of $i$ and $j$.
Eventually, since the limit points $a_1,\ldots,a_n$ are distinct, we have, for $k\neq i,j$,
\begin{equation}\label{k3}
\int_ {B_\e(a_k^\e) }|\k_{\a_i^\e}\cdot \k_{\a_j^\e}| \,\de x \leq \frac{\pi \e^2}{(2d_i^\e-\e)(2d_j^\e-\e)} \to 0,\quad \hbox{as }\e\to 0,
\end{equation}
and
\begin{equation}\label{k4}
\int_ {B_\e(a_i^\e) }|\k_{\a_i^\e}\cdot \k_{\a_j^\e}| \,\de x \leq \frac{2\pi\e}{2d_j^\e-\e} \to 0,\quad \hbox{as }\e\to 0
\end{equation}
(and again, the same holds swapping the roles of $i$ and $j$.)
Notice that \eqref{k3} and \eqref{k4} are refined versions of Lemma \ref{scalare}(ii).
By combining \eqref{k1}, \eqref{k2}, \eqref{k3}, and \eqref{k4} we get
 \begin{equation}\label{convg2}
\int_{\Omega_\e(\ba^\e)}\k_{\a_i^\e}\cdot \k_{\a_j^\e} \,\de x \to  \int_{\Omega}\k_{\a_i}\cdot \k_{\a_j} \,\de x,\quad \hbox{as }\e\to 0.
\end{equation}
In order to study the asymptotic behavior as $\e\to 0$ of the $L^2$ product of  $\k_{a_i^\e}$ and $\nabla\bar v ^\e_{a_j^\e}$ we use the decomposition $\bar v^\e_{a_j^\e} = \hat u_j^\e + q_j^\e$ introduced in Remark \ref{remuq}. We recall in particular Remark \ref{rem-conv}(i): $\hat u^\e$ admits an $H^1$ extension (not relabeled) that strongly converges to $v_{a_j}$; thus, integrating by parts and exploiting again the Dominated Convergence Theorem, in the limit as $\e \to 0$ we get
\begin{equation}\label{convg3}
\begin{split}
\int_{\Om_\e(\ba^\e)} \k_{a_i^\e}\cdot \nabla \hat u_j ^\e\,\de x &= \int_{\Om_\e(a_i^\e)}  \k_{a_i^\e}\cdot \nabla \hat u_j ^\e\,\de x - \int_{\bigcup_{j\neq i} B_\e(a_j^\e)} \k_{a_i^\e}\cdot \nabla \hat u_j ^\e\,\de x  
\\
&= \int_{\partial \Om} \k_{a_i^\e}\cdot \nu (g_{b_j^\e}-\theta_{a_j^\e})\,\de x + o(1)
\\ &  \to  \int_{\partial \Om} \k_{a_i}\cdot \nu (g_{b_j}-\theta_{a_j})\,\de x =   \int_{\Om} \k_{a_i}\cdot \nabla v _{a_j}\,\de x,
\end{split}
\end{equation}
where we have used the fact that $\hat u_j^\e$ has vanishing $L^2$ norm in the disks $B_\e(a_k^\e)$ as $\e \to 0$; while $\k_{a_i^\e}$  is uniformly bounded  in every disk  $B_\e(a_k^\e)$ with $k\neq i$, and satisfies $\k_{a_i^\e}\cdot \nu =0$ on $\partial B_\e(a_i^\e)$.
On the other hand, we recall that $(\nabla q_j^\e)^\perp = \nabla p_j^\e$ in the perforated domain, $p_j^\e$ being a solution to \eqref{5ottobre}. Since the solution to \eqref{5ottobre} is unique up to a constant, we choose $p_j^\e$ such that $ p_j^\e=0$ on $\partial B_\e(a_i^\e)$. Therefore, in view of Remark \ref{rem-conv}(ii), we infer that  $p_j^\e$ admits a an extension in $H^1(\Om)$ (not relabeled)  which is harmonic in every disk $B_\e(a_k^\e)$ and such that $\|p_j^\e\|_{H^1(\Om)}\rightarrow0$ as $\e\to0$.
Therefore, by letting $\phi_i^\e(x):=\log (|x- a_i^\e|)$, we have
{
\begin{equation}\label{convg4}
\begin{split}
& \int_{\Om_\e(\ba^\e)} \k_{a_i^\e}\cdot \nabla q_j^\e\,\de x = \int_{\Om_\e(\ba^\e)}  \nabla \phi_i^\e \cdot \nabla p_j^\e\,\de x = \int_{\bigcup_{k=1}^n \partial B_\e(a_k^\e)} \phi_i^\e \nabla p_j^\e\cdot \nu \, \de x\\
& =  \int_{\bigcup_{k=1}^n \partial B_\e(a_k^\e)} \phi_i^\e \nabla (p_j^\e - p_j^\e(a_i^\e))\cdot \nu \, \de x  = \int_{\bigcup_{k\neq i} B_\e{(a_k^\e)}} \nabla  \phi_i^\e\cdot \nabla p_j^\e\,\de x,
\end{split}
\end{equation}
and its absolute value can be estimated from above by 
\[
\frac{(n-1)\sqrt{\pi} \e}{d_i-\e}\|\nabla p_j^\e\|_{L^2(\Om)}\to 0.
\]
}
Similarly, the same limits in \eqref{convg3} and \eqref{convg4} hold exchanging the roles of $i$ and $j$. 
The thesis \eqref{convG} follows then by putting together \eqref{convg1}, \eqref{convg2}, \eqref{convg3}, and \eqref{convg4}.
\end{proof}

\subsection{\textbf{The case $n>1$ with $\min_i d_i=0$}}\label{ngen2}

\begin{lemma}\label{lemmafondamentale1}
Let $\ba=(a_1,\ldots,a_n)\in\overline\Omega{}^n$ and let $0<\e<\eta$ be such that for every $a_j\in\Om$ we have $B_\eta(a_j)\subset\Om$  (i.e. $d_j\geq\eta$).
Then there exists a positive constant $C$, independent of $\e$ and $\eta$, such that
\begin{equation}\label{fondamentale1}
\mathcal F_\e^{(n)}(\ba)\geq \mathcal F_{\eta}^{(n)}(\ba)- C.
\end{equation}
\end{lemma}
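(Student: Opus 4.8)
The plan is to upgrade the crude monotonicity \eqref{ginevra} into the quantitative gap \eqref{fondamentale1}. Recalling the rescaling \eqref{Fepsn}, and assuming (as we may, enlarging $C$ if necessary) that $\e<\eta<1$, so that $|\log\e|-|\log\eta|=\log(\eta/\e)$, one has
\begin{equation*}
\mathcal F_\e^{(n)}(\ba)-\mathcal F_\eta^{(n)}(\ba)=\big(\mathcal E_\e^{(n)}(\ba)-\mathcal E_\eta^{(n)}(\ba)\big)-\pi n\log(\eta/\e),
\end{equation*}
so that \eqref{fondamentale1} is equivalent to $\mathcal E_\e^{(n)}(\ba)-\mathcal E_\eta^{(n)}(\ba)\geq \pi n\log(\eta/\e)-C$. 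To produce the left-hand side I would fix the minimizer $u^\e_{\ba}$ of \eqref{recast-1} for $\mathcal E_\e^{(n)}$ and restrict its gradient to the smaller domain $\Omega_\eta(\ba)\subset\Omega_\e(\ba)$: since the curl-free and circulation constraints survive on the smaller domain and $\partial\Om\setminus\bigcup_i\overline B_\eta(a_i)\subset\partial\Om\setminus\bigcup_i\overline B_\e(a_i)$ carries the same trace $f$, this restriction is admissible for $\mathcal E_\eta^{(n)}$ and
\begin{equation*}
\mathcal E_\e^{(n)}(\ba)-\mathcal E_\eta^{(n)}(\ba)\geq \frac12\int_{\Omega_\e(\ba)\setminus\Omega_\eta(\ba)}|\nabla u^\e_{\ba}|^2\,\de x.
\end{equation*}
Everything thus reduces to a lower bound of order $\pi n\log(\eta/\e)$ for the Dirichlet energy concentrated in the annular region $\Omega_\e(\ba)\setminus\Omega_\eta(\ba)=\bigcup_i\big(A_\e^\eta(a_i)\cap\Om\big)$.

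Next I would localize around each distinct dislocation. By hypothesis every interior dislocation satisfies $B_\eta(a_i)\subset\Om$ and the distinct interior points are $2\eta$-separated, so the full annuli $A_\e^\eta(a_i)$ are pairwise disjoint and contained in $\Om$. On such an annulus the decomposition \eqref{recast-2} reads $\nabla u^\e_{\ba}=m_i\k_{a_i}+S_i$, where $m_i$ is the multiplicity of $a_i$ and $S_i$ collects the remaining $\k_{a_j}$'s together with $\nabla\bar v^\e_{\ba}$. The key observation is that $S_i$ is curl-free with \emph{vanishing} circulation around the hole of $A_\e^\eta(a_i)$, because all other singularities $a_j$ lie outside $\overline B_\eta(a_i)$ and $\nabla\bar v^\e_{\ba}$ is a genuine gradient; hence $S_i=\nabla\Phi_i$ for a single-valued $\Phi_i$ on the annulus. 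Since $\div\k_{a_i}=0$ and $\k_{a_i}\cdot\nu=0$ on both bounding circles by \eqref{euler-k}, integration by parts gives $\int_{A_\e^\eta(a_i)}\k_{a_i}\cdot S_i\,\de x=0$. The cross term therefore drops and, using $\int_{A_\e^\eta(a_i)}|\k_{a_i}|^2\,\de x=2\pi\log(\eta/\e)$,
\begin{equation*}
\frac12\int_{A_\e^\eta(a_i)}|\nabla u^\e_{\ba}|^2\,\de x\geq \frac{m_i^2}{2}\int_{A_\e^\eta(a_i)}|\k_{a_i}|^2\,\de x=\pi m_i^2\log(\eta/\e).
\end{equation*}

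Finally I would sum over the distinct dislocation locations. For a boundary dislocation the integration by parts is no longer exact, since $\k_{a_i}\cdot\nu\neq0$ on the arc of $\partial\Om$ bounding the truncated annulus $A_\e^\eta(a_i)\cap\Om$; there I would instead reproduce, localized to that sector, the Young-inequality estimate from the proof of Proposition~\ref{prop2} (the analogues of \eqref{F3}--\eqref{F5}), relying on the lower bound \eqref{solok1} and on the cone opening $\alpha>\pi/2$ furnished by \eqref{H3}. Because $\omega(a_i)=2$ on the boundary, this produces a contribution $\geq \pi m_i^2\log(\eta/\e)-C_i$ with, in fact, a strictly larger leading coefficient. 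As the regions $A_\e^\eta(a_i)\cap\Om$ are pairwise disjoint, these local bounds add; using $\sum_i m_i^2\geq\sum_i m_i=n$ and absorbing the finitely many additive constants into a single $C$, I obtain $\tfrac12\int_{\Omega_\e(\ba)\setminus\Omega_\eta(\ba)}|\nabla u^\e_{\ba}|^2\,\de x\geq \pi n\log(\eta/\e)-C$, which closes the argument.

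The main obstacle is precisely this lower bound on the annular energy, and in particular the requirement that the contributions of distinct dislocations genuinely add up. This is where the separation encoded in the hypothesis is essential: it makes the annuli disjoint, so that the localized estimates do not overcount, and keeps every interior annulus away from $\partial\Om$, so that the clean cancellation of the cross term via \eqref{euler-k} is available. The delicate point remains the boundary dislocations, where the annulus degenerates into a sector: there the cross term cannot be discarded and must be absorbed through Young's inequality, at the cost of a factor that the angle bound $\alpha>\pi/2$ of \eqref{H3} keeps compatible with the target coefficient; should two boundary defects fail to be separated, one would further group them into a single cluster and exploit the additivity of the circulation (and hence of $\sum m_i$) in the corresponding Cauchy--Schwarz estimate, as in Lemma~\ref{scalare}.
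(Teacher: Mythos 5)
Your skeleton is the same as the paper's: reduce \eqref{fondamentale1}, via the restriction/monotonicity step (the paper's \eqref{EJ}), to the lower bound $\tfrac12\int_{\cup_i A_\e^\eta(a_i)}|\nabla u^\e_{\ba}|^2\,\de x\geq \pi n\log(\eta/\e)-C$, extract the leading order from the diagonal terms $\pi m_i^2\log(\eta/\e)$, and use $\sum_i m_i^2\geq n$. Where you genuinely differ is in the cross terms. The paper expands the full square into $J_1,\dots,J_4$, discards $J_1\geq 0$, bounds the $\k_{a_j}\cdot\k_{a_h}$ interactions by Lemma~\ref{scalare}(i), and controls $\int \k_{a_j}\cdot\nabla\barv_{\ba}$ by splitting $\barv_{\ba}=\hat u+q$, passing to boundary integrals via the divergence theorem, and using only the $L^\infty$ bounds \eqref{grad-n} for $\hat u$ and \eqref{latesi1} for the conjugate potential $p$. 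Your regrouping $\nabla u^\e_{\ba}=m_i\k_{a_i}+S_i$ on each full annulus, together with the observation that $S_i$ is a single\nobreakdash-valued gradient there (zero circulation of the remaining $\k_{a_j}$'s around the hole) so that $\int_{A_\e^\eta(a_i)}\k_{a_i}\cdot S_i\,\de x=0$ by \eqref{euler-k}, is correct and is a cleaner route for \emph{interior} dislocations: all remaining interactions are absorbed into the nonnegative term $\tfrac12\int|S_i|^2$, and neither Lemma~\ref{scalare}(i) nor the $\hat u/q/p$ machinery is needed there.

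The gap is in the boundary case, which is the case the lemma is actually needed for in Proposition~\ref{ultima-prop}. Absorbing $2m_i\int_{A_\e^\eta(a_i)\cap\Om}\k_{a_i}\cdot S_i\,\de x$ by Young's inequality requires a bound on $\int_{A_\e^\eta(a_i)\cap\Om}|S_i|^2\,\de x$ that is uniform in $\e$, $\eta$, and the configuration $\ba$ — in particular a uniform bound on $\int|\nabla\barv_{\ba}|^2$. In Proposition~\ref{prop2} the analogous quantity is the Dirichlet energy of the single corrector $\bar u^\e$, controlled by \eqref{grad-n2}; here $S_i$ also contains the Neumann corrections $\nabla q^\e_{a_j}$, whose energy estimate \eqref{latesi} is proved only along sequences converging to \emph{distinct interior} points and degenerates when distinct dislocations are only $O(\eta)$ apart (the capacity competitor supported in $B_{\sqrt\e}$ is then no longer admissible). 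This is exactly why the paper never estimates $\int|S_i|^2$: it keeps that term with a favorable sign and treats the mixed term through $L^\infty$ bounds plus boundary integrals of $|\k\cdot\nu|$, which are uniform. To close your argument you must either import that mechanism for the boundary annuli or prove a uniform energy bound for $S_i$ that the paper does not possess. A smaller point: the pairwise disjointness of the truncated annuli, which you use to add the local bounds, needs $2\eta$-separation of \emph{all} distinct points, including the boundary ones; your closing remark about clustering nearby boundary defects would have to be made precise for the argument to be complete.
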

\begin{proof}
We start by comparing the energies $\mathcal E_\e$ and $\mathcal E_\eta$.
Recalling \eqref{ginevra}, it is easy to see that
\begin{equation}\label{EJ}
 \mathcal E_\eps^{(n)}(\ba) \geq \mathcal E_{\eta}^{(n)} (\ba) +\frac12  \int_{\cup_{i=1}^\ell A_\e^\eta(a_i)} \Big|\sum_{j=1}^n \omega(a_j) \k_{\a_j}+\nabla \barv_{\ba}\Big|^2\de x 
 =  \mathcal E_{\eta}^{(n)} (\ba) + J_1+J_2+J_3+J_4,
\end{equation}
where we have defined
\begin{equation*}
\begin{split}
J_1:= \frac12  \sum_{i=1}^\ell \int_{A_\e^\eta(a_i)} |\nabla \barv_{\ba}|^2 \, \de x, &\quad
J_3:= \sum_{i,j,h=1 \atop h>j}^\ell  m_j m_h \omega(a_j)\omega(a_h) \int_{A_\e^\eta(a_i)} \k_{\a_j}\cdot \k_{a_h}\, \de x, \\
J_2:= \frac12  \sum_{i,j=1}^\ell m_j^2 \omega^2(a_j)\int_{A_\e^\eta(a_i)} |\k_{\a_j}|^2 \, \de x, &\quad
J_4:= \sum_{i,j=1}^\ell m_j \omega(a_j) \int_{A_\e^\eta(a_i)}  \k_{\a_j}\cdot \nabla \barv_{\ba}\de x.
\end{split}
 \end{equation*}

The term $J_1$ is strictly positive, and we can neglect it.
In order to bound $J_2$ from below, we need to distinguish two cases, according to whether $d(a_i)$ is greater or smaller than $\eta$, namely whether or not the annulus $A_\e^\eta(a_i)$ is contained in $\Om$.
In the former case, $\omega(a_i)=1$ and a simple computation gives (see \eqref{solok})
\begin{equation}\label{maggiore}
\frac12  \sum_{j=1}^\ell m_j^2 \omega^2(a_j) \int_{A_\e^\eta(a_i)} |\k_{\a_j}|^2 \, \de x \geq \frac12 m_i^2 \omega^2(a_i)  \int_{A_\e^\eta(a_i)} |\k_{\a_i}|^2 \, \de x = \pi m_i^2 \log (\eta/\e).
\end{equation}
In the latter, $A_\e^\eta(a_i)$ is not completely contained in $\Om$ and therefore, by hypothesis, we know that $a_i\in\partial\Om$ and in particular $\omega(a_i)=2$, so that 
\begin{equation}\label{uguale}
\frac12  \sum_{j=1}^\ell m_j^2 \omega^2(a_j) \int_{A_\e^\eta(a_i)} |\k_{a_j}|^2 \, \de x \geq 2m_i^2  \int_{A_\e^\eta(a_i)} |\k_{a_i}|^2 \, \de x > \pi m_i^2 \log \Big(\frac{\min(\eta,\ea)}{\e}\Big),
\end{equation}
where $\ea$ is given in \eqref{H3} (see \eqref{solok1}). 
Therefore, for $\eta$ small enough, $\min(\eta,\ea)=\eta$ and \eqref{uguale} provides the same bound as \eqref{maggiore}, hence, summing over $i$, we obtain
\begin{equation}\label{boundJ_2}
J_2\geq\sum_{i=1}^\ell \pi m_i^2\log(\eta/\e).
\end{equation}
Recalling the definition \eqref{Fepsn} of $\mathcal F_\e^{(n)}(\ba)$, from \eqref{EJ} 
and \eqref{boundJ_2} we obtain
\begin{equation}\label{intermediate}
\mathcal F_\e^{(n)}(\ba)\geq  \mathcal F_\eta^{(n)}(\ba)+ \Big(\sum_{i=1}^\ell m_i^2-n\Big)\pi\log(\eta/\e)+J_3+J_4 \geq  \mathcal F_\eta^{(n)}(\ba)+J_3+J_4,
\end{equation}
since $\eta>\e$ and $\sum_{i=1}^\ell m_i^2-n\geq0$.
We obtain the thesis \eqref{fondamentale1} from \eqref{intermediate}, provided we bound $J_3$ and $J_4$ from below.

In view of Lemma \ref{scalare}(i), we have
\begin{equation}\label{mag1}
J_3= \sum_{i,j,h=1 \atop h>j}^\ell  m_j m_h \omega(a_j)\omega(a_h) \int_{A_\e^\eta(a_i)} \k_{\a_j}\cdot \k_{a_h}\, \de x  \geq - 8\pi\ell n^2 \geq -8\pi n^3. 
\end{equation}

To estimate $J_4$, we start by splitting $\nabla \barv_{\ba}=\nabla \hat u_{\ba}^\e+\nabla q^\e_{\ba}$, 
with $\hat u_{\ba}^\e=\sum_{k=1}^n \hat u_{a_k}^\e$ and $q_{\ba}^\e=\sum_{k=1}^n q_{a_k}^\e$, where $\hat u_{a_k}^\e$ and $q_{a_k}^\e$ are introduced in Remark~\ref{remuq}.
Then
$$
\sum_{i=1}^\ell\int_{A_\e^\eta(a_i)}  \k_{\a_j}\cdot \nabla \barv_{\ba}\de x =  \sum_{i=1}^\ell \int_{A_\e^\eta(a_i)}  \k_{\a_j}\cdot \nabla \hat u_{\ba}^\e\de x +\sum_{i=1}^\ell \int_{A_\e^\eta(a_i)} \k_{\a_j}\cdot\nabla q_{\ba}^\e \de x
$$
and, by using the Divergence Theorem, \eqref{grad-n}, and the fact that $|\k_{\a_j}(x)|\leq |x|^{-1}$ on $\partial A_\e^\eta(a_i)$, we can estimate
\begin{equation}\label{mag22}
\begin{split}
\sum_{i=1}^\ell \int_{A_\e^\eta(a_i)}  \k_{\a_j}\cdot \nabla \hat u_{\ba}^\e\,\de x= & \sum_{i=1}^\ell \int_{\partial A_\e^\eta(a_i)}  \k_{\a_j}\cdot \nu \, \hat u_{\ba}^\e\,\de x \\
\geq & -\bar C\sum_{i=1}^\ell\int_{\partial A_\e^\eta(a_i)}  |\k_{\a_j}\cdot \nu |\,\de x\geq-2\pi\bar Cn,
\end{split}
\end{equation}
where $\bar C:=\left\|g- \sum_{k=1}^n\omega(a_k)\theta_{a_k}\right\|_{L^\infty(\partial\Om)}$.
Moreover 
\begin{align*}
 \sum_{i=1}^\ell \int_{A_\e^\eta(a_i)}  \k_{\a_j}\cdot\nabla q_{\ba}^\e \,\de x=\sum_{i=1}^\ell \int_{A_\e^\eta(a_i)}  \nabla \phi_{\a_j}\cdot\nabla p_{\ba}^\e\, \de x, 
\end{align*}
with $p_{\ba}^\eps=\sum_{i=1}^n p_{a_i}^\e$, where $p_{a_i}^\e$ solves \eqref{5ottobre}. 
In particular, by using \eqref{latesi1}, we infer that the $L^\infty$ norm of $p_{\ba}^\eps$ is bounded by $Cn\e$, so that we can estimate
\begin{align}\label{mag3}
\sum_{i=1}^\ell \int_{A_\e^\eta(a_i)}  \k_{\a_j}\cdot\nabla q_{\ba}^\e \de x\geq -2\pi Cn^2.
\end{align}

Combining \eqref{mag22} with \eqref{mag3} and summing over $j$, we obtain
$J_4\geq-2\pi n^2(\bar C+Cn)$, 
which, together with \eqref{intermediate} and \eqref{mag1}, allows us to get estimate \eqref{fondamentale1}, with constant $C=2\pi n^2\big(\bar C+(C+4)n\big)$.
The lemma is proved.
\end{proof}

\begin{proposition}\label{ultima-prop}
Let $\ba\in \overline \Om{}^n$ 
and let $\ba^\e$ 
be a sequence of points in $\overline{\Om}{}^n$ converging to $\ba$ 
as $\e\to0$. If $\min_{1\leq i\leq n} d_i=0$ with $d_i$ defined as in \eqref{dconi} then $\mathcal F_\epsilon^{(n)}(\ba^\e)\to \infty$, as $\eps\to0$.
\end{proposition}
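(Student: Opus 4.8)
The plan is to localize the energy around each cluster of colliding or boundary-bound dislocations and to extract, at the level of $\mathcal{F}^{(n)}_\e=\mathcal{E}^{(n)}_\e-\pi n|\log\e|$ from \eqref{Fepsn}, a divergent surplus over the rescaling. Let $d_i^\e$ be the quantities \eqref{dconi} for $\ba^\e$, let $z_1,\dots,z_K\in\overline\Om$ be the distinct limit points of $\ba^\e$, and let $C_k$, $m_k=\#C_k$, $\sum_k m_k=n$, be the corresponding clusters. The assumption $\min_i d_i=0$ forces at least one \emph{degenerate} cluster: some $m_k\ge2$ (collision) or some $z_k\in\partial\Om$ (boundary), or both. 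I fix $\eta>0$ with the $B_{2\eta}(z_k)$ pairwise disjoint and with $B_{2\eta}(z_k)\subset\Om$ when $z_k\in\Om$; for $\e$ small every core of $C_k$ lies in $B_{\sigma^{(k)}_\e}(z_k)$ with $\sigma^{(k)}_\e\to0$. Writing $\nabla u^\e_{\ba^\e}=\sum_i\omega(a_i^\e)\k_{a_i^\e}+\nabla\barv_{\ba^\e}$ as in \eqref{recast-2}, the remainder $\nabla\barv_{\ba^\e}$ has $L^\infty$-bounded (hence $\e$-uniform) boundary data, so by Lemma~\ref{max_princ_n} all terms involving it, as well as the mutual products $\k_{a_i^\e}\!\cdot\!\k_{a_j^\e}$ across distinct clusters (bounded below through Lemma~\ref{scalare}(i)), contribute at least $-C$. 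It therefore suffices to bound the clustered self-energies from below.

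The engine is a scale-by-scale lower bound: on any annulus $A_r^R(z_k)$ that contains no core, $\curl\nabla u^\e_{\ba^\e}=0$ and the circulation of $\nabla u^\e_{\ba^\e}$ around $z_k$ equals $2\pi m_k$, so a radius-wise Cauchy--Schwarz gives $\tfrac12\int_{A_r^R(z_k)}|\nabla u^\e_{\ba^\e}|^2\,\de x\ge\pi m_k^2\log(R/r)$. Applied with $r=\sigma^{(k)}_\e$ and $R=\eta$ to an interior cluster this yields the charge-enhanced term $\pi m_k^2\log(\eta/\sigma^{(k)}_\e)$. For a boundary cluster $z_k\in\partial\Om$ one localizes instead to the sector cut out by $\Om$: the weight $\omega=2$ and the uniform interior cone condition \eqref{H3}, with aperture $\alpha>\pi/2$, give through the sectoral estimate \eqref{solok1} and the Young splitting of \eqref{F5}--\eqref{F77} a local bound whose leading coefficient is a positive multiple of $\alpha-\pi/2$ times $|\log(\max\{\e,\sigma^{(k)}_\e\})|$; this is exactly the mechanism of Proposition~\ref{prop2} and already exceeds the share $\pi m_k$ of the rescaling.

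It remains to recover the energy at the fine scales inside $B_{\sigma^{(k)}_\e}(z_k)$, where the $m_k$ individual cores separate. This is where the iterative scheme enters: applying the comparison of Lemma~\ref{lemmafondamentale1} from $\e$ up along the finite hierarchy of collision scales and enclosing, at each step, the currently closest (possibly already multiply charged) cores in a single enlarged hole, one produces at the next comparison a strictly positive gain $J_2$ equal to $2pq\,\pi\log(\cdot)$ when charges $p,q$ merge; the harmonic part on the enlarged holes is controlled uniformly by Lemma~\ref{lemma7giugno}. Summing these gains recovers $\ge\pi m_k\log(\sigma^{(k)}_\e/\e)-C$ per cluster, so that after subtracting $\pi n|\log\e|$ and discarding the resolved energy at scale $\eta$ (which stays bounded below uniformly in $\e$, the $K$ holes being separated and the configuration non-degenerate, cf.\ Subsection~\ref{ngen1}) one is left with
\begin{equation*}
\mathcal F^{(n)}_\e(\ba^\e)\ \ge\ \pi\sum_{k}\bigl(m_k^2-m_k\bigr)\log\frac{\eta}{\sigma^{(k)}_\e}\ +\ \sum_{z_k\in\partial\Om}\Bigl(\alpha-\tfrac\pi2\Bigr)\bigl|\log\sigma^{(k)}_\e\bigr|\ -\ C.
\end{equation*}
Since at least one cluster is degenerate, either a collision term ($m_k\ge2$) or a boundary term is present, and $\sigma^{(k)}_\e\to0$ makes the right-hand side diverge; hence $\mathcal F^{(n)}_\e(\ba^\e)\to+\infty$.

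The main difficulty lies in this iteration. When cores overlap ($\e\gtrsim\sigma^{(k)}_\e$, the analogue of the \emph{fast collision} in Proposition~\ref{prop2}) the clean annular slicing and the disjoint-hole decomposition underlying \eqref{intermediate} break down, and one must pass to genuinely fat, multiply charged holes; keeping the sign of every accumulated $J_2$-gain then requires tracking the enclosed circulation of $\nabla u^\e_{\ba^\e}$ around each partial cluster and controlling the harmonic remainder and the cross-cluster interactions \emph{uniformly in both $\e$ and the merging step}. It is precisely for these uniform estimates on arbitrary families of holes that Lemmas~\ref{lemma7giugno}, \ref{max_princ_n}, and \ref{scalare} have been prepared.
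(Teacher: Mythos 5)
Your overall architecture is the same as the paper's: an iterative merging of nearby cores driven by Lemma~\ref{lemmafondamentale1}, a separation of scales that extracts the surplus $\pi(m_k^2-m_k)\log(\cdot)$ on mesoscale annuli enclosing a cluster of total charge $m_k$, and the sector mechanism of Proposition~\ref{prop2} for clusters reaching $\partial\Om$. One ingredient is genuinely different and, for interior clusters, cleaner: you lower-bound $\tfrac12\int_{A_r^R}|\nabla u^\e_{\ba^\e}|^2$ directly by the circulation/Cauchy--Schwarz argument on core-free circles, which keeps the regions disjoint and avoids expanding $|\nabla u^\e|^2$ there at all; the paper instead works with the expansion \eqref{recast-2} throughout and must control every cross term explicitly.

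The genuine gap is the sentence claiming that, by Lemma~\ref{max_princ_n}, ``all terms involving'' $\nabla\barv_{\ba^\e}$ contribute at least $-C$. First, Lemma~\ref{max_princ_n} does not apply to $\barv_{\ba^\e}$: by \eqref{zucco} this function carries the nonhomogeneous Neumann datum $-\sum_i\k_{a_i}\cdot\nu$ on the holes, and controlling that part requires the decomposition of Remark~\ref{remuq} and the conjugate harmonic function $p_i^\e$ with the capacity bounds \eqref{latesi1}--\eqref{latesi} --- which are themselves proved only for pairwise disjoint cores, exactly what fails before the merging step. Second, and more seriously, the cross terms $\int\k_{a_j^\e}\cdot\nabla\hat u^\e_i$ are \emph{not} bounded by a constant when a cluster approaches $\partial\Om$: the divergence-theorem bound degenerates because $\int_{\partial\Om}|\k_{a_j^\e}\cdot\nu|\,\de\mathcal H^1$ blows up logarithmically as $a_j^\e\to\partial\Om$, and Cauchy--Schwarz only gives $O(|\log\eta(\e)|^{1/2})$. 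This is precisely why the paper's Step~4 replaces the constant bound by the Young splitting \eqref{inter2}--\eqref{F5}, obtaining $\mathcal G_\e\ge-2\pi\lambda|\log\eta(\e)|-C_{ij}$ as in \eqref{stimaG}, and then must close the argument with the balance $0<\lambda<C_{\mathcal F}/C_{\mathcal G}$ so that the positive surplus $\big(\sum_{i\in I_2\cup I_3}m_i(m_i-1)+C_1\sum_{i\in I_3}m_i^2\big)\pi|\log\eta(\e)|$ dominates the interaction losses. Your final display silently assumes this competition has been won with a constant error; omitting the balance of constants skips the crux of the degenerate case. (A smaller omission: your iteration only merges cores, whereas the paper's Step~3, case~(2), also projects a cluster onto $\partial\Om$ when $d(a^\e_{i,1})$ realizes the minimal scale --- without this the sector estimate \eqref{solok1} with weight $\omega=2$ is not available at the final scale, and the ``fast collision'' with the boundary is left untreated.)
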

\begin{proof}
{Our goal is to show that we can bound the energy $\mathcal F_\epsilon^{(n)}(\ba^\e)$ from below by a quantity that explodes in the limit as $\e\to0$.
This will be achieved by applying the following iterative procedure, which is performed at $\e$ fixed.}

\medskip

\noindent\textbf{Step 0 (Labeling)}  
We start by relabeling in a more suitable way the limit dislocations and the approximating ones. According to Definition~\ref{def-mi}, we relabel the limit points so that the first $\ell$ ($1\leq \ell\leq n$) are distinct. Moreover, we fix $\delta>0$ such that the disks 
$B_{\delta} (a_i)$ are pairwise disjoint for $i=1,\ldots,\ell$.
For every $i=1,\ldots,\ell$, there are $m_i$ points in $\{a_1^\e,\ldots,a_n^\e\}$ which converge to $a_i$. We denote these points by $a^\e_{i,j}$, with $j=1,\ldots,m_i$. For $\e$ small enough we clearly have $a^\e_{i,j}\in B_\delta(a_i)$, for every $j=1,\ldots,m_i$ and $i=1,\ldots,\ell$. 

\medskip

{At each iteration (from Step 1 to Step 3), we will replace the sequence (with respect to $\e>0$) of families (indexed by $i\in\{1,\ldots,\ell\}$) $\{a^\e_{i,j}\}_{j=1}^{m_i}$ with a sequence of singletons $c_i^\e$, still converging to $a_i$ as $\e \to 0$, with multiplicity $m_i$.
Additionally, we will define a new core radius $\eta(\e)$ and in Step 4 we will compare the energies $\mathcal F_\epsilon^{(n)}(\ba^\e)$ and $\mathcal F_{\eta(\epsilon)}^{(n)}(\bc^\e)$.}

\medskip

\noindent\textbf{Step 1 (Ordering)} Let $i\in \{1,\ldots,\ell\}$ be fixed. 
According to Definition \ref{def-mi}, we order the family $\{a^\e_{i,j}\}_{j=1}^{m_i}$ so that the first $\ell^\e_i$ are distinct, and we denote by $m_{i,j}^\e$ their multiplicities.
Notice that $\sum_{j=1}^{\ell_i^\e} m_{i,j}^\e=m_i$.
With these positions, we clearly have
\begin{equation}\label{easy}
\mathcal F_\e^{(n)}(\ba^\e) = \mathcal F_\e^{(n)} (\underbrace{a_{1,1}^\e}_{m_{1,1}^\e\text{-times}},\ldots,\underbrace{a_{1,\ell_1^\e}^\e}_{m_{1,\ell_1^\e}^\e\text{-times}},\ldots,\underbrace{a_{i,1}^\e}_{m_{i,1}^\e\text{-times}},\ldots,\underbrace{a_{\ell,1}^\e}_{m_{\ell,1}^\e\text{-times}},\ldots,\underbrace{a_{\ell,\ell_\ell^\e}^\e}_{m_{\ell,\ell_\ell^\e}^\e\text{-times}}).
\end{equation}

In case $\ell^\e_i>1$, for every $j=1,\ldots,\ell^\e_i$, we associate to the distinct points $a^\e_{i,j}$ the following quantity:
if $a^\e_{i,j}\in \partial \Om$, we set
\begin{equation*}
s(a^\e_{i,j}):=   \min\bigg\{\frac{|a^\e_{i,j}-a^\e_{i,k}|}{2}: k\in\{1,\ldots,\ell^\e_i\}\setminus\{j\}\bigg\},
\end{equation*}
whereas, if $a^\e_{i,j}\in\Om$, we set
\begin{equation*}
s(a^\e_{i,j}):= \min\bigg\{d(a^\e_{i,j}),  \min\bigg\{\frac{|a^\e_{i,j}-a^\e_{i,k}|}{2}: k\in\{1,\ldots,\ell_i^\e\}\setminus\{j\}\bigg\}\bigg\}.
\end{equation*}
Observe that, if the limit point $a_i\in\Om$,  for $\e$ small enough $d(a^\e_{i,j})$ is always greater than any mutual semidistance ${|a^\e_{i,j}-a^\e_{i,k}|}/{2}$, for all $j,k\in\{1,\ldots,\ell_i^\e\}$.
Up to  reordering the different $\ell^\e_i$ points $\{a^\e_{i,1},\ldots,a^\e_{i,\ell^\e_i}\}$ we can always suppose that
\begin{equation*}
0< s(a^\e_{i,1})\leq \ldots \leq s(a^\e_{i,\ell_i}). 
\end{equation*}

In case $\ell_i^\e=1$, namely when all the $a^\e_{i,j}$ coincide with $a^\e_{i,1}$, we set 
\begin{equation}\label{soluzionedituttiiproblemi}
s(a^\e_{i,1}):=
\begin{cases}
0 & \text{if $a_i\in\Omega$}, \\
d(a_{i,1}^\e) & \text{if $a_i\in\partial\Omega$}.
\end{cases}
\end{equation}

\medskip

\noindent \textbf{Step 2 (Stop test)}
If $s(a^\e_{i,1})=0$ for every $i=1,\ldots,\ell$, then we define $c_i^\e:=a_{i,1}^\e$, $\eta(\e):=\e$, and we go to Step 4.
Observe that by \eqref{soluzionedituttiiproblemi} $c_i^\e\in\partial\Om$ if $a_i\in\partial\Om$.
Otherwise, we define the following quantity
\begin{equation}\label{min-s}
\hat{s}=\hat s(\e):=\min\big\{s(a^\e_{i,1})>0 \ :\ i\in\{1,\ldots,\ell\}\big\}
\end{equation}
and we go to Step 3.
Observe that the set where the minimum is taken is not empty, hence $\hat s$ is finite and strictly positive.

\medskip 

\noindent \textbf{Step 3 (Iterative step)} We compare $\hat s$ with $\e$.

If $\hat{s}>\e$, we relabel $a_{k,j}^\e$ by $\hat a_{k,j}^\e$ and their multiplicities accordingly, set $\hat\e:=\hat s$, and estimate $\mathcal F_\e^{(n)}(\ba^\e)$ by means of \eqref{fondamentale1} proved in Lemma \ref{lemmafondamentale1}, with $\eta=\hat\e$, to obtain
\begin{equation}\label{fondamentale11}
\mathcal F_\e^{(n)}(\ba^\e)\geq \mathcal F_{\hat\e}^{(n)}(\ldots,\underbrace{\hat a_{k,j}^\e}_{\hat m_{k,j}^\e\text{-times}},\ldots)-C.
\end{equation}

If $\hat{s}\leq \e$ we distinguish two cases: 
\begin{itemize}
\item[(1)]  $\hat s$ is equal to $|a^\e_{i,1}-a^\e_{i,2}|/2$, for some $i$:
we replace the points $a^\e_{i,1}$ and $a^\e_{i,2}$ (and all those coinciding with either one of them) by $\hat a^\e_{i,1}$, the midpoint between $a^\e_{i,1}$ and $a^\e_{i,2}$, with multiplicity $\hat m_{i,1}^\e:=m_{i,1}^\e+m_{i,2}^\e$.
The replacement is performed simultaneously for all the indices $i$ that realize the minimum in \eqref{min-s}.
For all the other indices, we simply relabel $a_{k,j}^\e$ by $\hat a_{k,j}^\e$ and their multiplicities accordingly.
\item[(2)] $\hat s$ is realized by $d(a^\eps_{i,1})$, for some $i$:
we replace the point $a^\eps_{i,1}$ (and all those coinciding with it) by $\hat a^\e_{i,1}$, its projection to the boundary $\partial \Om$, with multiplicity $\hat m_{i,1}^\e:=m_{i,1}^\e$. 
\end{itemize}
Setting $\hat\e:=\e+\hat s$ and recalling \eqref{ginevra}, \eqref{Fepsn}, and \eqref{easy}, we have
\begin{equation}\label{fondamentale2}
\mathcal F_\e^{(n)}(\ba^\e)\geq \mathcal F_{\hat\e}^{(n)} (\ldots,\underbrace{\hat a_{k,j}^\e}_{\hat m_{k,j}^\e\text{-times}},\ldots)
-n\pi \log 2.
\end{equation}

Notice that $\hat\e$ satisfies the following bound:
\begin{equation}\label{fondamentale_e} 
\hat\e\leq \max\{2\e,\hat s\}\leq \max\{2\e,\bar s\},
\end{equation}
where $\bar s$ is defined as the maximum value of the $s(a_{i,j}^\e)$, namely
\begin{equation*}
\bar{s}=\bar s(\e):=\max\big\{s(a^\e_{i,\ell^\e_i})>0 \ :\ i\in\{1,\ldots,\ell\}\big\}.
\end{equation*}
We have obtained a new family $\{\hat a_{i,j}^\e\}$ and a new radius $\hat\e$ and we restart the procedure by applying Step 1 to these new objects.

\medskip

Notice that the procedure ends after at most $n^2$ iterations.
Indeed, when applying Step 3, we will always fall into case (1) after at most $n$ iterations, and then the number of distinct points will decrease when we apply (1). In conclusion, since the number of distinct points is at most $n$, we will reach the target situation after at most $n^2$ iterations of Step 3. 

\medskip

\noindent \textbf{Step 4 (Estimates and conclusion)} 
By combining the chain of inequalities obtained by applying Step 3 $k(\leq n^2)$ times, estimates \eqref{fondamentale2} and \eqref{fondamentale11} give 
\begin{equation}\label{esplode}
\mathcal F_\e^{(n)}(\ba^\e)\geq \mathcal F_{\eta(\e)}^{(n)}(\underbrace{c_1^\e}_{m_1\text{-times}},\ldots,\underbrace{c_\ell^\e}_{m_\ell\text{-times}})- k(C+n\pi\log2),
\end{equation}
where $\eta(\e)$ is a number depending on $\e$ obtained after $k$ iterations of the procedure that defines $\hat\e$ in Step 3.
Let us estimate $\eta(\e)$. 
At every iteration, the value $\bar s$ can increase, but it is easy to see that it cannot grow lager than its double.
Therefore, after $k$ iterations of Step 3, by \eqref{fondamentale_e}, we have
\begin{equation*}
\eta(\e)\leq 2^k\max\{\e,\bar s\}\leq 2^{n^2}\max\{\e,\bar s\}.
\end{equation*}
Since $\bar s$ tends to $0$ as $\e\to0$, we have $\eta(\e)\to 0$ as $\e\to0$.

We now claim that the right-hand side of \eqref{esplode} tends to $+\infty$ as $\e\to0$. 
Similarly to \eqref{nonmiscordare}, we write the energy as the sum of three contributions:
\begin{equation}\label{esplode2}
 \mathcal F_{\eta(\e)}^{(n)}(\underbrace{c_1^\e}_{m_1\text{-times}},\ldots,\underbrace{c_\ell^\e}_{m_\ell\text{-times}})=\sum_{i=1}^\ell\mathcal F_{\eta(\e)}^{(m_i)}(\underbrace{c_i^\e}_{m_i\text{-times}})+\sum_{i=1}^\ell m_i^2\mathcal R_\e(c_i^\e)+\sum_{i\neq j}m_im_j\mathcal G_\e(c_i^\e,c_j^\e)
\end{equation}
where, setting $\omega_i:=\omega(a_i)=\omega(c_i^\epsilon)$ (see Step 3), and $\k_i^\e:=\k_{c_i^\e}$ for brevity, 
\begin{equation}\label{rr}
\mathcal R_\e(c_i^\e):=\frac12\int_{\Om_{\eta(\e)}(c_1^\e,\dots,c_\ell^\e)}|\omega_i\k_i^\e+\nabla \bar v^\e_i|^2\,\de x-\frac12\int_{\Om_{\eta(\e)}(c_i^\e)}|\omega_i \k_i^\e+\nabla\bar u^\e_i|^2\,\de x,
\end{equation}
and
\begin{equation}\label{ii}
\mathcal G_\e(c_i^\e,c_j^\e):=\int_{\Om_{\eta(\e)}(c_1^\e,\dots,c_\ell^\e)}(\omega_i\k_i^\e+\nabla \bar v^\e_i)\cdot(\omega_j\k_j^\e+\nabla \bar v^\e_j)\,\de x,
\end{equation}
$\bar u^\e_i$ being the solution to \eqref{dec_eqn} 
associated with $c_i^\e$ and $\bar v_i^\e$ being as in Remark~\ref{remuq} for $i=1,\dots, \ell$ and core radius $\eta(\eps)$.

Fix $i\in\{1,\dots, \ell\}$. 
If $a_i\in \Om$, we have 
\begin{equation}\label{eqqdue}
\mathcal F_{\eta(\eps)}^{(m_i)}(\underbrace{c_i^\e}_{m_i\text{-times}})=m_i^2 \mathcal F_{\eta(\epsilon)}(c_i^\e)+m_i(m_i-1)\pi|\log \eta(\eps)|\geq m_i(m_i-1)\pi|\log \eta(\eps)|-C,
\end{equation}
where the equality follows from \eqref{ecco} and the inequality is a consequence of Proposition \ref{prop1}.
If instead $a_i\in\partial \Om$, by using \eqref{ecco} again and \eqref{nuovatesi}, we have
\begin{equation*}
\mathcal F_{\eta(\eps)}^{(m_i)}(\underbrace{c_i^\e}_{m_i\text{-times}})\geq C_1m_i^2|\log(\max\{\eta(\e),d(c_i^\e)\})|+m_i(m_i-1)\pi|\log\eta(\e)|+C_2m_i^2
\end{equation*}
(the constants $C_1$ and $C_2$ are those in \eqref{F77}), and, since $d(c_i^\e)=0$ as noticed in Step 2, we obtain
\begin{equation}\label{eqqquattro}
\mathcal F_{\eta(\eps)}^{(m_i)}(\underbrace{c_i^\e}_{m_i\text{-times}})\geq(C_1m_i^2+m_i(m_i-1)\pi)|\log\eta(\e)|+C_2m_i^2.
\end{equation}

By Lemma \ref{ext_lemma} the function $\bar v_i^\e$ can be extended inside any disk $B_{\eta(\e)}(c^\e_j)$ with $j\neq i$; therefore the remainder \eqref{rr} can be rewritten as $\mathcal R_\epsilon(c_i^\e)=\mathcal R'_\epsilon(c_i^\e)-\mathcal R''_\epsilon(c_i^\e)$ with
$$\mathcal R'_\epsilon(c_i^\e):=\frac12\int_{\Om_{\eta(\e)}(c_i^\e)}|\omega_i \k_i^\e+\nabla \bar v_i^\e|^2\de x-\frac12\int_{\Om_{\eta(\e)}(c_i^\e)}|\omega_i \k_i^\e+\nabla \bar u_i^\e|^2\de x
$$
and
$$\mathcal R''_\epsilon(c_i^\e):=\frac12\sum_{j\neq i}\int_{B_{\eta(\e)}(c_j^\e)}|\omega_i \k_i^\e+\nabla \bar v_i^\e|^2\de x.$$
Using the minimality of $\bar u_i^\e$ in $\Omega_{\eta(\e)}(c^\e_i)$ it turns out that $\mathcal R'_\epsilon(c_i^\e)\geq0$. 
Using that $|K_i^\e|\leq 1/\eta(\epsilon)$ in $B_{\eta(\e)}(c_j^\e)$ for every $j\neq i$ and Lemma \ref{ext_lemma} we obtain
\begin{equation}\label{stimaR}
\mathcal R_\epsilon(c_i^\epsilon)\geq - \mathcal R''_\epsilon(c_i^\e)\geq -\frac12\sum_{j\neq i}\int_{B_{\eta(\e)}(c_j^\e)}|\omega_i\k_i^\e|^2\de x-\frac12\sum_{j\neq i}\int_{B_{\eta(\e)}(c_j^\e)}|\nabla \bar v^\e_i|^2\de x\geq -C. 
\end{equation}

To estimate the interaction term \eqref{ii}, fix also $j\in\{1,\dots, \ell\}\setminus\{i\}$. 
Then, we can write $\mathcal G_\epsilon(c_i^\e,c_j^\e)=\mathcal G'_\epsilon(c_i^\e,c_j^\e)+\mathcal G''_\epsilon(c_i^\e,c_j^\e)+\mathcal G'''_\epsilon(c_i^\e,c_j^\e)$ with
\begin{subequations}
\begin{align}
\mathcal G'_\epsilon(c_i^\e,c_j^\e) &:=  \omega_i\omega_j\int_{\Om_{\eta(\e)}(c_1^\e,\dots,c_\ell^\e)}\k^\e_i\cdot \k^\e_j\,\de x, \label{inter1}\\
\mathcal G''_\epsilon(c_i^\e,c_j^\e)& :=  \omega_j\int_{\Om_{\eta(\e)}(c_1^\e,\dots,c_\ell^\e)}\nabla \bar v^\e_i\cdot \k_j^\e\,\de x+\omega_i\int_{\Om_{\eta(\e)}(c_1^\e,\dots,c_\ell^\e)}\nabla \bar v^\e_j\cdot \k_i^\e\,\de x, \label{inter2}\\
\mathcal G'''_\epsilon(c_i^\e,c_j^\e)& :=  \int_{\Om_{\eta(\e)}(c_1^\e,\dots,c_\ell^\e)}\nabla \bar v^\e_i\cdot\nabla \bar v^\e_j\,\de x. \label{inter3}
\end{align}
\end{subequations}
Thanks to Lemma \ref{scalare}(i), the functional $\mathcal G_\e'$ in \eqref{inter1} is uniformly bounded from below by a constant.
To estimate $\mathcal G''_\epsilon$ and $\mathcal G'''_\epsilon$ we recall the decomposition $\bar v_k^\e=\hat u_k^\e+q_k^\e$ and the function $p_k^\e$, solution to \eqref{5ottobre}, that we introduced in Remark \ref{remuq}, where $k=i$ or $k=j$.
Here the functions $\hat u_k^\e$ and $q_k^\e$ are introduced in Remark~\ref{remuq} with $(a_1^\e,\dots,a_n^\e)$ replaced by $(c_1^\e,\dots,c_\ell^\e)$, and the Dirichlet boundary condition for $\hat u_k^\e$ given by $g_{b_k^\e}-\omega_k\theta_k^\e$ on $\partial\Om$. Therefore, we can estimate \eqref{inter3} as follows
\begin{equation*}
\mathcal G'''_\epsilon(c_i^\e,c_j^\e)\leq \int_{\Om_{\eta(\e)}(c_1^\e,\dots,c_\ell^\e)}\big(|\nabla \hat u^\e_i|^2+|\nabla \hat u^\e_j|^2+|\nabla q^\e_i|^2+|\nabla q^\e_j|^2\big)\,\de x
\end{equation*}
and since, the gradient of $q_k^\e$ coincides in modulus with the gradient of $p_k^\e$, by \eqref{grad-n2} and \eqref{latesi} we can bound $\mathcal G'''_\epsilon(c_i^\e,c_j^\e)$ from below. 
The functional in \eqref{inter2} is the most delicate to treat: if both $a_i$ and $a_j$ are in $\Omega$, $\mathcal G_\e''(c_i^\e,c_j^\e)$ is bounded below by a constant (notice that in this case we could have applied Proposition \ref{propG} to $\mathcal G_\e(c_i^\e,c_j^\e)$ itself and we would have concluded).  In the general case, by means of the above-mentioned decomposition, \eqref{inter2} reads
\begin{equation}\label{inter21}
\begin{split}
\mathcal G''_\epsilon(c_i^\e,c_j^\e)&=\omega_j\int_{\Om_{\eta(\e)}(c_1^\e,\dots,c_\ell^\e)}\nabla \hat u^\e_i\cdot \k_j^\e\,\de x+\omega_i\int_{\Om_{\eta(\e)}(c_1^\e,\dots,c_\ell^\e)}\nabla \hat u^\e_j\cdot \k_i^\e\,\de x\\
&+\omega_j\int_{\Om_{\eta(\e)}(c_1^\e,\dots,c_\ell^\e)}\nabla  q^\e_i\cdot \k_j^\e\,\de x+\omega_i\int_{\Om_{\eta(\e)}(c_1^\e,\dots,c_\ell^\e)}\nabla  q^\e_j\cdot \k_i^\e\,\de x.
\end{split}
\end{equation}
We first deal with the last two terms, involving the gradients of $q^\epsilon_i$ and $q^\epsilon_j$: from H\"older's inequality, recalling that the gradient of $q^\e_k$ coincides in modulus with the gradient of $p^\e_k$, by  \eqref{latesi} and \eqref{solok} we conclude that these terms are bounded. To estimate the first two terms in \eqref{inter21}, we use Young's inequality, \eqref{grad-n2} and \eqref{solok} to obtain
\begin{equation*}
\begin{split}
&\omega_j\int_{\Om_{\eta(\e)}(c_1^\e,\dots,c_\ell^\e)}\nabla \hat u^\e_i\cdot \k_j^\e\,\de x+\omega_i\int_{\Om_{\eta(\e)}(c_1^\e,\dots,c_\ell^\e)}\nabla \hat u^\e_j\cdot \k_i^\e\,\de x \\
&\leq\frac{1}{2\lambda}\int_{\Om_{\eta(\e)}(c_1^\e,\dots,c_\ell^\e)} (\omega_j|\nabla \hat u^\e_i|^2+\omega_i|\nabla \hat u^\e_j|^2)\,\de x+\frac{\lambda}{2}\int_{\Om_{\eta(\e)}(c_1^\e,\dots,c_\ell^\e)} (\omega_i|\k_i^\e|^2+\omega_j|\k_j^\e|^2)\,\de x\\
&\leq  \frac{C}{\lambda}\Big(\|g_{b^\e_i}-\omega_i\theta^\e_i\|^2_{H^{1/2}(\partial\Omega)}+\|g_{b^\e_j}-\omega_j\theta^\e_j\|^2_{H^{1/2}(\partial\Omega)}\Big)+2\pi\lambda|\log(\eta(\e))|+2\pi\lambda\log(\mathrm{diam}\,\Omega),\\
\end{split}
\end{equation*}
where $C$ is a positive constant independent of $\epsilon$ and $\lambda>0$ is an arbitrary constant that will be chosen later.
Finally, we can control the term $\mathcal G_\e(c_i^\e,c_j^\e)$ as
\begin{equation}\label{stimaG}
\mathcal G_\epsilon(c_i^\e,c_j^\e)\geq - 2\pi\lambda|\log(\eta(\e))|-C_{ij}
\end{equation}
where all the terms independent of $\e$ have been included in the constant $C_{ij}$. 

We can classify each point $a_i$ according to whether it belongs to
\begin{itemize}
\item[1.]  the interior of $\Om$, with multiplicity $m_i=1$;
\item[2.]  the interior of $\Om$, with multiplicity $m_i>1$;
\item[3.] the boundary $\partial \Om$.
\end{itemize}
For $k=1, 2, 3$ we denote by $I_k$ the set of indices corresponding to those points $c_i^\e\to a_i$ belonging to the $k$-th category.
Therefore, combining \eqref{eqqdue}, \eqref{eqqquattro}, \eqref{stimaR}, and \eqref{stimaG}, summing over $i$ and $j$, we obtain the following lower bound for the energy \eqref{esplode2}:
$$
\mathcal F_{\eta(\e)}^{(n)}(\underbrace{c_1^\e}_{m_1\text{-times}},\ldots,\underbrace{c_\ell^\e}_{m_\ell\text{-times}})\geq 
\pi(C_{\mathcal{F}}-\lambda C_{\mathcal{G}})|\log\eta(\eps)|+C,
$$
where
\[
C_{\mathcal F}:=\sum_{i\in I_2\cup I_3} m_{i}(m_{i}-1)+\sum_{i\in I_3} C_{1}m_{i}^2, \quad\quad C_{\mathcal G}:=2n^2,
\]
and $C$ is a constant independent of $\e$. 
The assumption $\min_{1\leq i\leq n} d_i=0$ guarantees that $C_{\mathcal F}>0$, since in this case either $I_2\neq \emptyset$ or $I_3\neq \emptyset$. 
Therefore, choosing $\lambda$ in \eqref{stimaG} so that $0<\lambda<C_{\mathcal F}/C_{\mathcal G}$, 
we obtain that the right-hand side of \eqref{esplode} tends to $+\infty$ as $\epsilon\to 0$. 
The proposition is proved.
\end{proof}

\subsection{Proofs of Theorem \ref{thGcn} and of Corollary \ref{confinon}}\label{nonhaunnome}
We combine the previous results to prove the main results in the case of $n$ dislocations.
\begin{proof}[Proof of Theorem \ref{thGcn}] 
By \eqref{nonmiscordare}, combining Propositions \ref{prop1} and \ref{prop2} with Propositions \ref{propR}, \ref{propG}, and \ref{ultima-prop} 
yields the continuous convergence of $\mathcal F_\e^{(n)}$ to $\mathcal F^{(n)}$.
The continuity of $\mathcal F$ follows from the relationship between continuous convergence and $\Gamma$-convergence (see \cite[Remark 4.9]{dalmaso}), which implies that $\mathcal F^{(n)}(\ba)$ tends to $+\infty$ as either one of the $\a_i$'s approaches the boundary or any two $\a_i$ and $\a_j$ (with $i\neq j$) become arbitrarily close.
Therefore $\mathcal F^{(n)}$ is minimized by $n$-tuples $\ba=(\a_1,\ldots,\a_n)$ of distinct points in $\Omega{}^n$.
\end{proof}

\begin{proof}[Proof of Corollary \ref{confinon}]
Recalling definition \eqref{dconi}, let $\delta>0$ and let 
\begin{equation*}
\Omega^{\delta,n}\coloneqq\{(x_1,\ldots,x_n) \in\Omega^n : d_i>\delta \ \ \forall i=1,\ldots,n\}.
\end{equation*}
The energy functional $\mathcal E_\e^{(n)}$ is continuous over $\overline{\Omega}{}^{\delta,n}$, for every $\e\in(0,\delta)$.
Indeed, given two different configurations of dislocations $\ba'$ and $\ba''$, it is possible to construct a diffeomorphism $\Phi$ that maps $\overline\Omega_\e({\ba'})$ into $\overline\Omega_\e({\ba''})$, keeps the boundary $\partial\Omega$ fixed, 
and satisfies $\|D\Phi - I\|_{L^\infty(\Omega;\R^{2{\times}2})}$, $\|\det D\Phi- 1\|_{L^\infty(\Omega)}= o(|\ba'-{\ba}''|)$.
This implies that $\mathcal E_\e^{(n)}({\ba'}) = \mathcal E_\e^{(n)}({\ba''}) + o(|\ba'-{\ba''}|)$. 

Fix now $\e>0$ and $k\in \mathbb N$ and let $\ba_{\epsilon,k}$ be such that
 \begin{equation}\label{8ott}
 \inf_{\overline\Om{}^n} \mathcal F_\epsilon^{(n)}\leq \mathcal F_\epsilon^{(n)}(\ba_{\epsilon,k})\leq \inf_{\overline\Om{}^n} \mathcal F_\epsilon^{(n)}+1/k.
 \end{equation}
 Without loss of generality, we can assume that the whole sequence $\ba_{\e,k}$ converges to some $\ba^\e\in \overline\Om{}^n$, as $k\to \infty$, and satisfies $|\ba_{\e,k}-\ba^\e|< 1 /k$.
 We claim that there exist $\overline{\e}>0$ and $\delta>0$ such that (defining $d_i^\e$'s as in \eqref{dconi}, associated with the point $\ba^\e$)
 \begin{equation}\label{ticonfino}
d^\e\coloneqq \min_i d_i^\e>\delta\qquad \forall\ \epsilon\in (0,\overline{\e}).
 \end{equation}
Assume by contradiction that there exists a subsequence of $\e$ (not relabeled) such that $d^\e\rightarrow0$ as $\eps\rightarrow0$. 
Let $k_\e$ be a sequence of natural numbers, increasing as $\e$ goes to zero, and let $\ba$ be a cluster point of the family $\{\ba_{\e,k_\e}\}_\e$. 
In view of Theorem \ref{thGcn}, thanks to \cite[Corollary 7.20]{dalmaso}, we infer that $\ba$ is a minimizer of the functional $\mathcal F^{(n)}$.
By the triangle inequality, we get
$$
\min_i d_i \leq d^\e +  | \ba^\e -\ba_{\e,k_\e}| + |\ba_{\e,k_\e} - \ba| < d^\e + \frac{1}{k_\e} + o(\e)\rightarrow0,
$$
which implies that there exists an index $i$ such that either $a_i\in \partial \Om$ or $a_i=a_j$ for some $i\neq j$, in contradiction with Theorem \ref{thGcn}.

Let $\ezero:=\min \{\overline\e,\delta\}$ and $\e\in (0,\ezero)$. 
In view of \eqref{ticonfino}, the minimizing sequence $\{\ba_{\e,k}\}_k$ lies in the set  $\{(x_1,\ldots,x_n) \in\Omega^n : d_i>\delta \ \ \forall i=1,\ldots,n\}$. 
Therefore, by \eqref{8ott}, we conclude that 
 \[
 \inf_{\overline\Om{}^n} \mathcal F_\epsilon^{(n)}=\lim_{k\to\infty} \mathcal F_\epsilon^{(n)}(\ba_{\epsilon,k})=\mathcal F_\epsilon^{(n)}(\ba^\epsilon),
 \]
namely $\ba^\e$ is a minimizer of $\mathcal F_\epsilon^{(n)}$ made of $n$ distinct points each of which is at distance at least $\delta$ from the boundary.
Eventually, again by \cite[Corollary 7.20]{dalmaso}, we infer that, up to a subsequence (not relabeled), $\ba^\e$ converges to a minimizer of $\mathcal F^{(n)}$ so that $\mathcal F_\e(\ba^\e)^{(n)}\to\mathcal F^{(n)}(a)$ as $\e\to0$.
\end{proof}

\section{Plots of the limiting energy}\label{numerica}

We plot here the limiting energy $\cF_\e^{(n)}$ of \eqref{Fepsn} in the special case $n=1$ and $\Omega$ the unit disk centered at the origin $O$ for two different boundary conditions.
In view of Corollary \ref{confinon}, for a fixed $\e>0$ small enough, the minima of $\mathcal F_\e^{(1)}$ are close to those of $\mathcal F^{(1)}$, therefore, by \eqref{Fepsn}, the energy landscape provided by $\mathcal F^{(1)}$ is a good approximation of that of $\mathcal E_\e^{(1)}$.

The two different boundary conditions that we consider are $f_1=1$ on $\partial\Omega$, and $f_2=2$ on $\partial\Omega\cap\{x>0\}$ and $f_2=0$ on $\partial\Omega\cap\{x<0\}$; the choices that we make for the numerics are 
$$g_1=\theta_O \qquad\text{and}\qquad
g_2=\begin{cases}
2\theta_O & \text{in the first quadrant,} \\
\pi & \text{in the second and third quadrants,} \\
2\theta_O-2\pi & \text{in the fourth quadrant.}
\end{cases}
$$

\begin{figure}[h]
\begin{center}
\includegraphics[scale=.18]{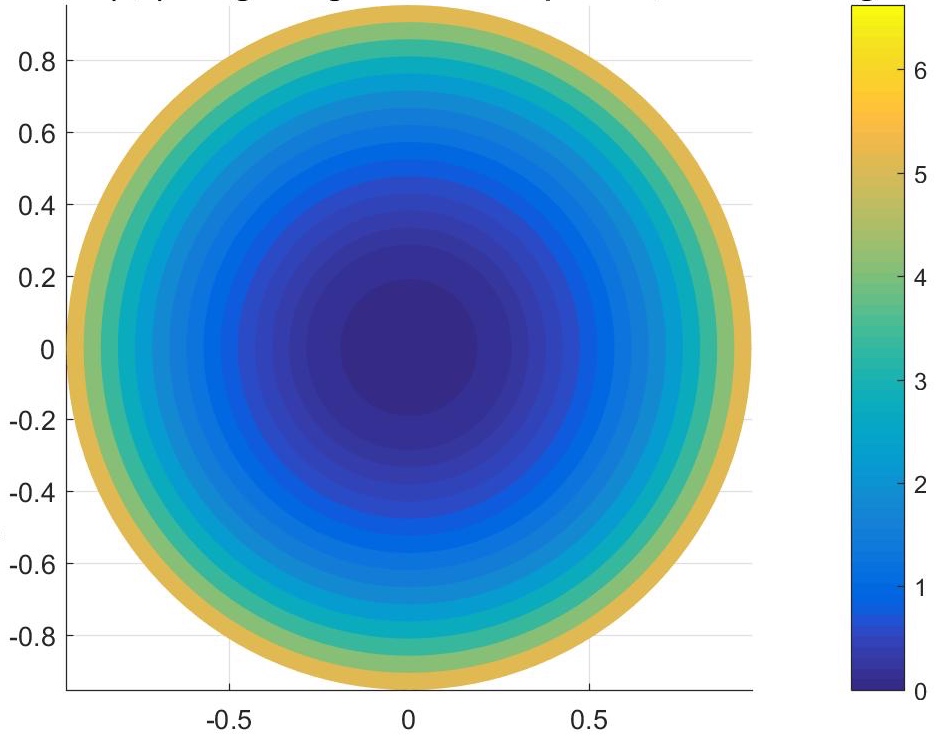}\qquad
\includegraphics[scale=.18]{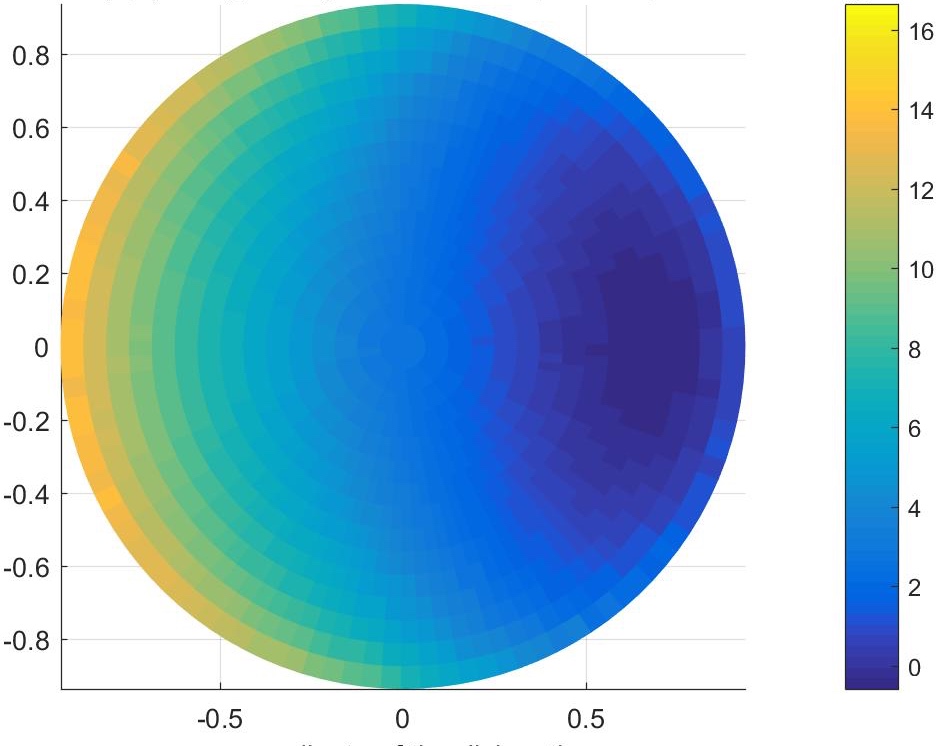}
\end{center}
\caption{To the left, the functional $\mathcal F^{(1)}$ associated with $g_1$; to the right, the functional $\mathcal F^{(1)}$ associated with $g_2$. Both are top views.}
\label{figone}
\end{figure}
From Figure \ref{figone} one can deduce that the energy profile associated with $f_1$ is radially symmetric and has a minimum in the center.
The one for $f_2$ is no longer radially symmetric, but symmetric with respect to the $y$ axis and has a minimum in the interior of $\Omega$ which is located where $f_2$ is the largest (by symmetry, it is located along the $x$-axis, at about $x=0.65$).

For $n\geq 2$, we consider $\Omega$ the unit disk centered at the origin, $f^{(n)}=n$ on $\partial \Omega$, and, for simplicity, we minimize $\mathcal F^{(n)}$ in a subclass of configurations: the vertices of regular $n$-gons centered at the origin (this particular choice is supported by the conjecture in \cite{SS2000}). The optimization problem becomes (numerically) easy, as it is enough to minimize the energy with respect to the circumradius of the $n$-gon, which may vary from 0 to 1.

\begin{figure}[h]
\begin{center}
\includegraphics[scale=.15]{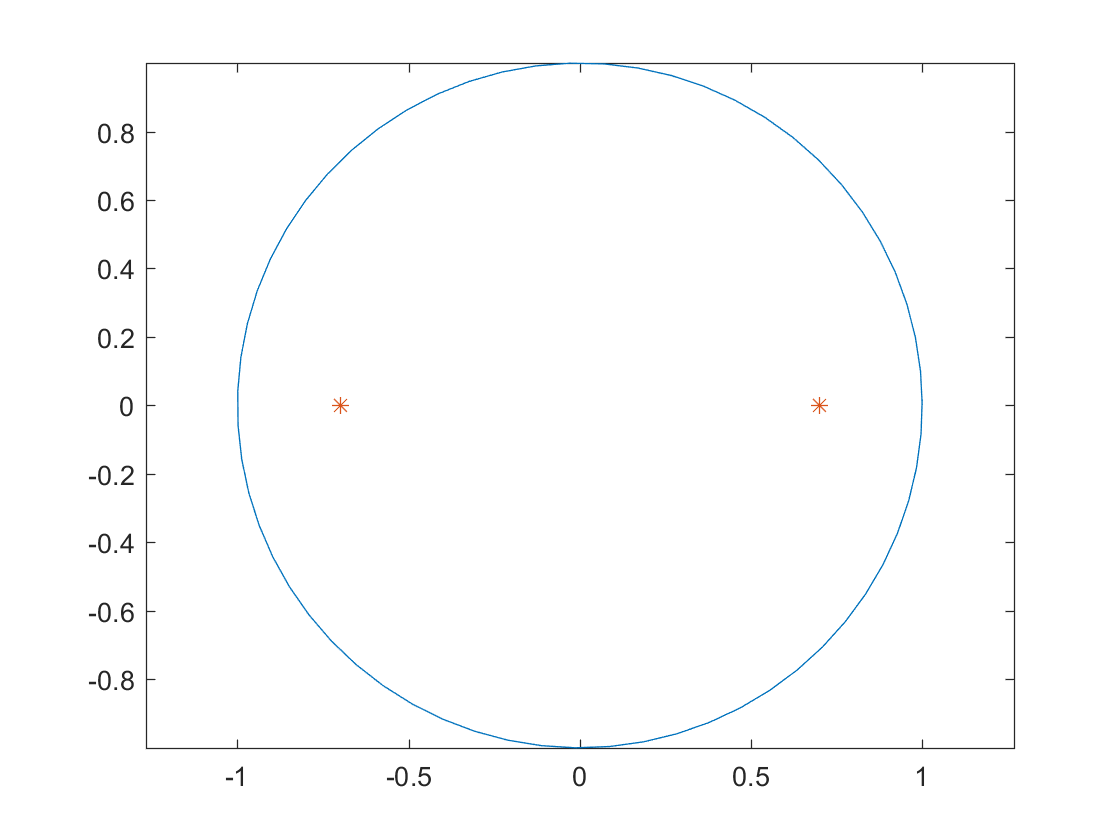}
\includegraphics[scale=.15]{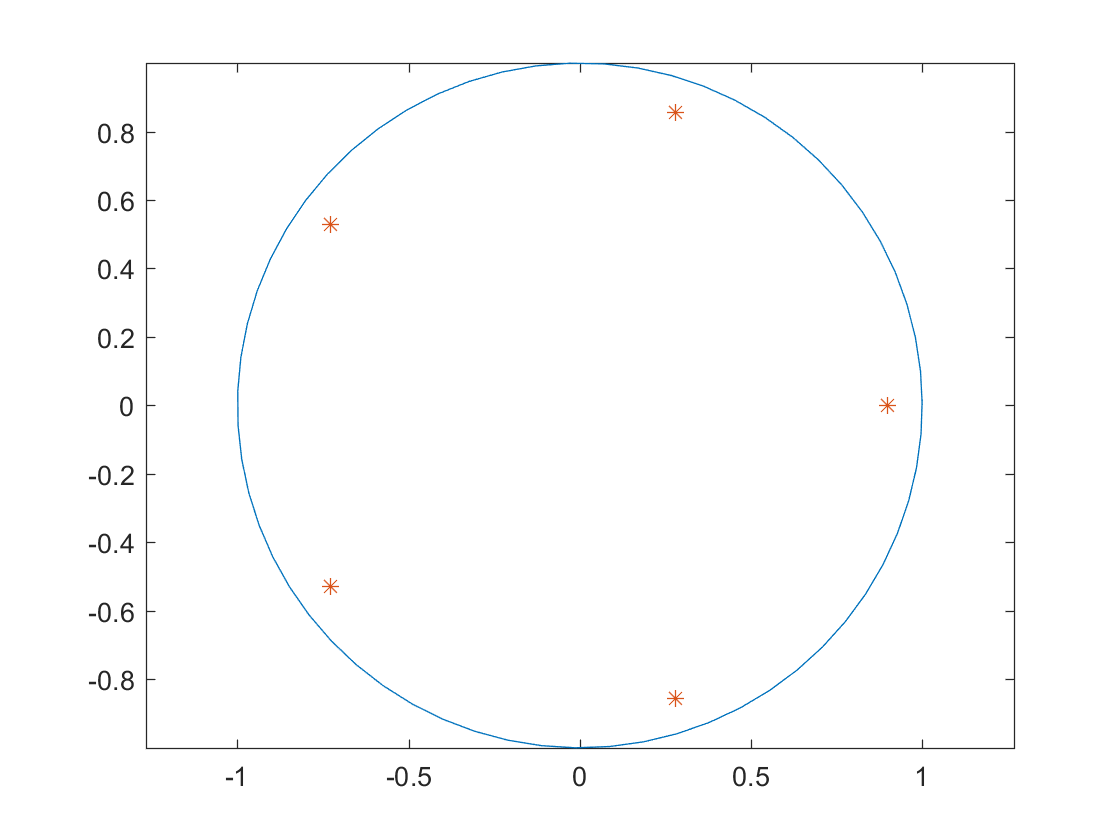}
\includegraphics[scale=.15]{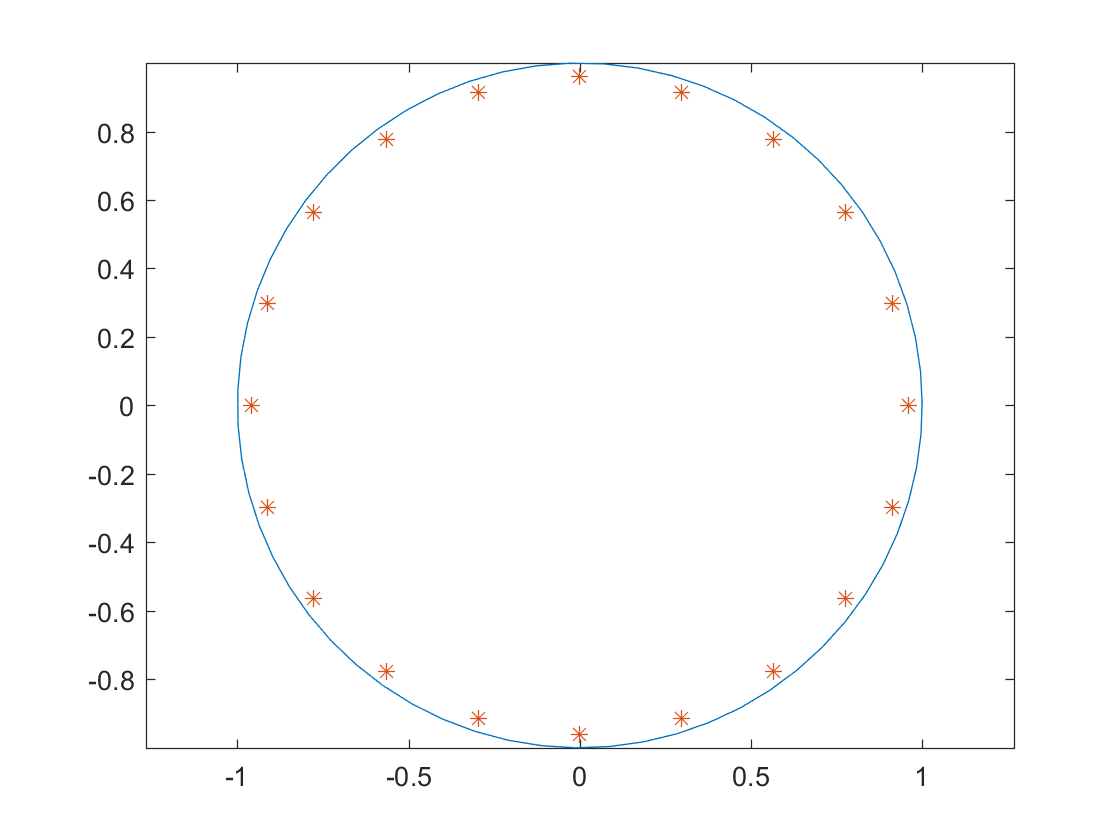}
\end{center}
\caption{The optimizers of $\mathcal F^{(n)}$ among the vertices of regular $n-$gons centered at the origin for $n=2, 5, 20$, from left to right.}
\label{plot2520}
\end{figure}

Let us denote by $\mathbf{p}_n$ an optimal $n$-gone of $\mathcal F^{(n)}$ (unique up to rotations). 
As it is suggested by Figure \ref{plot2520}, as $n$ increases we observe a decrease of the distance $\mathrm{dist}\, (\mathbf{p}_n, \partial \Omega)$ and of the energy $\mathcal F^{(n)}(\mathbf{p}_n)$ (which is negative). More precisely, we observe the following behaviors (see Figure~\ref{asy}):
$$
\mathrm{dist}\,(\mathbf{p}_n,\partial \Omega) \sim \frac{1}{n}\,,\quad |\mathcal F^{(n)}(\mathbf{p}_n)|\sim n^2\,,\quad \hbox{as }n\to +\infty\,.
$$
Let us mention that the investigation of minima and minimizers of $\mathcal F^{(n)}$ in the limit as $n\to +\infty$ is the object of our paper \cite{LMSZupscaling}.

\begin{figure}[h]
\begin{center}
\includegraphics[scale=.22]{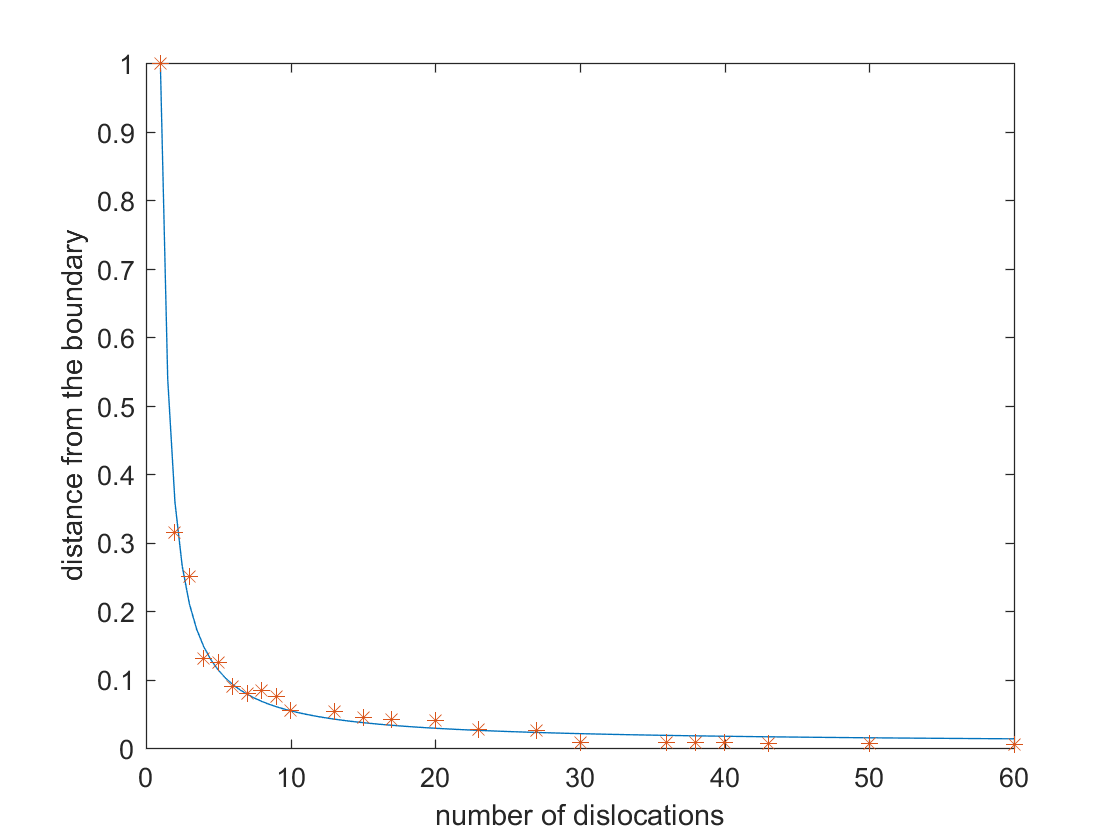}\qquad
\includegraphics[scale=.22]{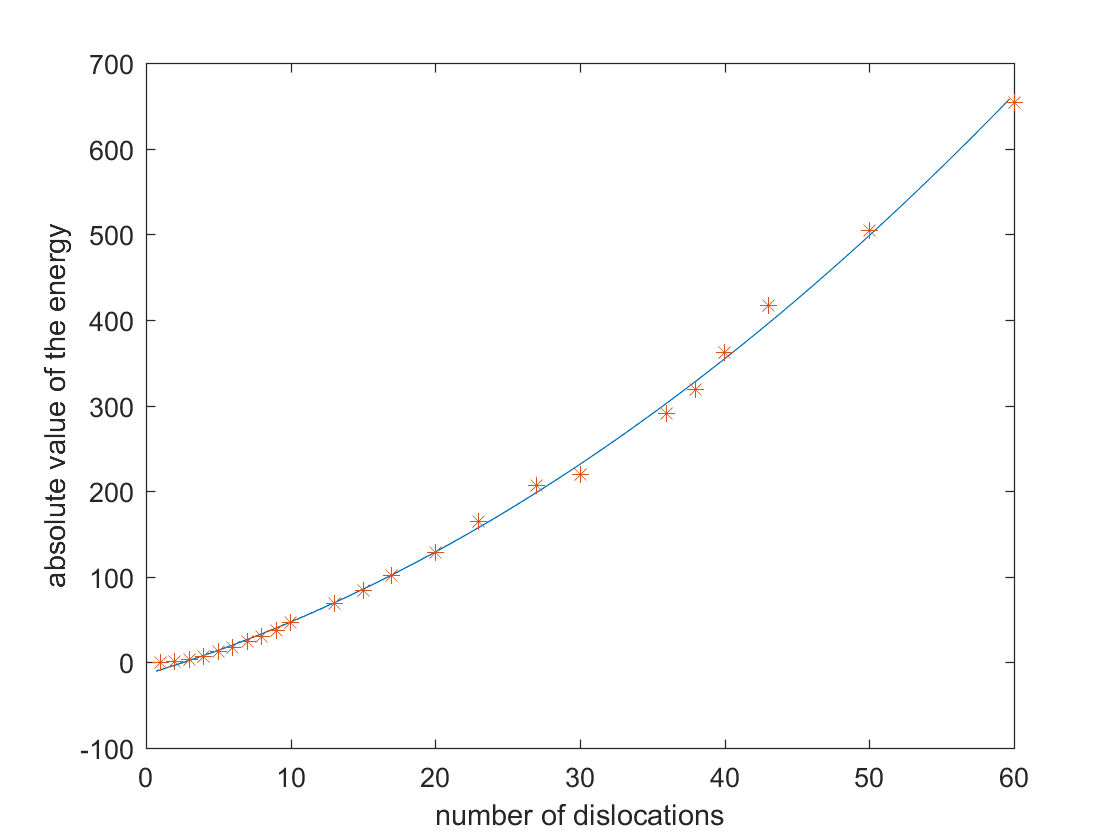}
\end{center}
\caption{Plot of $\mathrm{dist}\,(\mathbf{p}_n,\partial \Omega)$ (left) and plot of $|\mathcal F^{(n)}(\mathbf{p}_n)|$ (right) as functions of $n$. Test done for $n$ dislocations, $n$ between 1 and 60.}
\label{asy}
\end{figure}

\smallskip

\subsection*{Acknowledgements}
The authors are members of the INdAM-GNAMPA project 2015 \href{http://fcm2.weebly.com/}{\emph{Fenomeni Critici nella Meccanica dei Materiali: un Approccio Variazionale}}, which partially supported this research.
I.L.\@ and M.M.\@ acknowledge partial support from the ERC Advanced grant \href{https://people.sissa.it/~dalmaso/QuaDynEvoPro-Home.htm}{\emph{Quasistatic and Dynamic Evolution Problems in Plasticity and Fracture}} (Grant agreement no.: 290888).
M.M.\@ acknowledges partial support from the ERC Starting grant \emph{High-Dimensional Sparse Optimal Control} (Grant agreement no.: 306274) and the DFG Project \emph{Identifikation von Energien durch Beobachtung der zeitlichen Entwicklung von Systemen} (FO 767/7).
R.S.\@ acknowledges partial support from the ERC Starting grant FP7-IDEAS-ERC-StG (EntroPhase) (Grant agreement no.: 256872). R.S. is also grateful to the Erwin  Schr\"odinger Institute for the financial support obtained during the last part of the present research.
D.Z.\@ acknowledges partial support from the FIR Starting grant \href{https://sites.google.com/site/firb2013geoqualaspectspde/home}{\emph{Geometrical and qualitative aspects of PDE's}}.

\end{document}